\documentclass[A4paper,12pt,twoside]{article}
\usepackage{geometry}
\usepackage{fancyhdr}
\usepackage{graphicx}
\usepackage{amssymb}
\usepackage{amsmath}
\usepackage{amsfonts}
\usepackage{mathrsfs}
\usepackage{mathtools}
\usepackage{bm}
\usepackage{color}
\usepackage{setspace}
\usepackage{exscale}
\usepackage{relsize}
\usepackage{epstopdf}
\usepackage{float}
\usepackage{cite}
\usepackage{nicefrac}
\usepackage{setspace}
\usepackage{algorithm,algorithmic}
\usepackage{tikz}
\usetikzlibrary{positioning}
\usepackage{xcolor}
\usepackage{rotating}
\usepackage{enumerate}
\usepackage{hyperref}
\usepackage{authblk}
\usepackage{soul}
\usepackage{textcomp}
\usepackage[title]{appendix}

\usepackage{cleveref}

\usepackage{theorem}
\usepackage{empheq}
\usepackage{booktabs,multirow} 
\usepackage{array} 
\usepackage{paralist} 
\usepackage{verbatim} 
\usepackage{subfigure} 

\usepackage[normalem]{ulem}

\newtheorem{theorem}{Theorem}[section]
\newtheorem{remark}{Remark}[section]

\newtheorem{definition}{Definition}[section]

\newenvironment{proof}{{\flushleft \bf Proof:}}{}

\newenvironment{acknowledgment}{{\flushleft \bf Acknowledgment:}}{}

\allowdisplaybreaks[1]
\numberwithin{equation}{section}
\numberwithin{figure}{section}
\numberwithin{table}{section}

\usepackage{ae,aecompl}

\def\muu {\bm{u}}

\def\bnabla{\bm{\nabla}}

\newcommand{\dt}{\Delta t}
\newcommand{\dx}{\Delta x}
\newcommand{\dy}{\Delta y}
\newcommand{\kph}{{k+\frac{1}{2}}}
\newcommand{\kmh}{{k-\frac{1}{2}}}
\newcommand{\jph}{{j+\frac{1}{2}}}
\newcommand{\jmh}{{j-\frac{1}{2}}}

\newcommand{\mF}{\bm{F}}

\newcommand{\mG}{\bm{G}}

\newcommand{\bV}{\bm{V}}

\newcommand{\ve}{\varepsilon}

\newcommand\eref[1]{(\ref{#1})}

\makeatletter
\newcommand{\refcheckize}[1]{%
  \expandafter\let\csname @@\string#1\endcsname#1%
  \expandafter\DeclareRobustCommand\csname relax\string#1\endcsname[1]{%
    \csname @@\string#1\endcsname{##1}\wrtusdrf{##1}}%
  \expandafter\let\expandafter#1\csname relax\string#1\endcsname
}
\makeatother

\newcommand*\xbar[1]{%
  \hbox{%
    \vbox{%
      \hrule height 0.5pt 
      \kern0.4ex
      \hbox{%
        \kern-0.05em
        \ensuremath{#1}%
        \kern-0.00em
      }%
    }%
  }%
}

\newcommand{\uvec}[2][3]{\boldsymbol{#2\mkern-#1mu}\mkern#1mu}

\newcommand\abs[1]{\left\lvert#1\right\rvert}

\newcommand{\bU}{\uvec{U}}

\graphicspath{{Figures/}}

\title{A New Asymptotic-Preserving Dual Formulation Finite-Volume Method for the Compressible Euler Equations}
\author{Alina Chertock\thanks{Department of Mathematics, North Carolina State University, Raleigh, NC 27695, USA;
{\tt chertock@math.ncsu.edu}}, Smadar Karni\thanks{Department of Mathematics, University of Michigan, 48109, USA; {\tt karni@umich.edu}},
Alexander Kurganov\thanks{Department of Mathematics and Shenzhen International Center for Mathematics, Southern University of Science and
Technology, Shenzhen, 518055, China; {\tt alexander@sustech.edu.cn}}, and Lorenzo Micalizzi\thanks{Department of Mathematics, North Carolina
State University, Raleigh, NC 27695, USA; {\tt lmicali@ncsu.edu}}}
\date{}

\begin{document}
\maketitle

\begin{abstract}
The paper focuses on the development of numerical methods for the compressible Euler equations. It is well-known that if the Mach number is
small, the system becomes stiff and hence explicit schemes suffer from severe time-step restrictions, making them inefficient or even
impractical. Our objective is to develop an asymptotic preserving (AP) scheme that remains uniformly accurate and stable across all Mach
numbers.

Instead of the conservative hyperbolic flux splitting approach, which is widely used to design AP schemes, we consider a primitive
(nonconservative) formulation and introduce a nonconservative hyperbolic splitting. The resulting system is discretized using a
semi-implicit approach: the stiff part is handled semi-implicitly using second-order central differences, while the nonstiff part is treated
explicitly using a second-order path-conservative central-upwind discretization. A key feature of our method is that the pressure at each
time level is computed by solving a well-posed Poisson-type elliptic equation, thereby enforcing the AP property. Simultaneously, we evolve
the conservative form of the system using a semi-discrete central-upwind (CU) scheme. At the end of each stage of the time discretization,
we perform a special post-processing that selects the appropriate numerical solution depending on the Mach number. This guarantees that in
low-Mach-number regimes, the solution is obtained by the AP nonconservative scheme, while in higher-Mach-number regimes, a sharp and
physically relevant solution is computed by the conservative CU scheme.

Numerical experiments confirm that the proposed AP scheme achieves the expected second order of accuracy and that the time-step constraint
is independent of the Mach number, making it a robust and efficient alternative to conventional explicit methods.
\end{abstract}

\noindent
{\bf Key words:} Compressible Euler equations; low Mach number; asymptotic preserving (AP) schemes; hyperbolic splitting; semi-implicit
methods; deferred correction.

\medskip
\noindent
{\bf AMS subject classification:} 65M08, 65M20, 76M12, 35L65, 76N15, 35B40.

\section{Introduction}
The paper focuses on the compressible Euler equations, which, like any other hyperbolic system of PDEs, are characterized by a finite speed
of propagation. This plays a crucial role in the development of explicit numerical methods, for which a major stability requirement is to
keep the time steps inversely proportional to the maximum wave speed over the entire computational domain.

It is well-known that low-Mach-number flows pose several major challenges for numerical simulations. A distinctive feature of such regimes
is the appearance of both slow material waves, which transport quantities like entropy and vorticity, and fast acoustic waves, whose speeds
scale inversely with the Mach number. As the Mach number decreases, the resulting stiffness imposes severe time-step restrictions on
explicit methods and leads to excessive numerical diffusion, making such schemes inefficient or even impractical for real applications.
Fully-implicit methods can address the stiffness, but have their own drawbacks: they tend to oversmear material waves (see, e.g.,
\cite{boscarino2018all}), require the solution of large nonlinear systems, and may fail to capture the correct solution in the
zero-Mach-number limit.

To overcome these difficulties, a widely adopted strategy is to use either implicit-explicit (IMEX) or semi-implicit (SI) methods based on
conservative hyperbolic flux splitting. This approach decomposes the hyperbolic flux into stiff and nonstiff components in a manner that
preserves the conservative structure of the original system. The fast (stiff) part, associated with acoustic waves, is treated
(semi-)implicitly to relax time-step limitations, while the slow (nonstiff) part is handled explicitly to accurately capture the evolution
of material waves without excessive numerical diffusion.

It is also known that, as the Mach number tends to zero, the compressible Euler equations reduce to the incompressible Euler equations. It
is essential to ensure that numerical schemes also exhibit the same limiting behavior at the discrete level and provide a consistent
discretization of the incompressible Euler equations as the Mach number tends to zero. Schemes that maintain this property are called
asymptotic-preserving (AP). They were originally introduced to capture steady-state solutions for neutron transport in the diffusive regime
\cite{LMM,LarMor}, but the specific definition was introduced in \cite{GJL,jin1999efficient,Kla99} in the context of stiff kinetic
equations. In recent years, AP schemes have been extensively studied and applied for simulating low-Mach-number flows; see, e.g.,
\cite{AllVig,boscarino2019high,boscarino2018all,BDLTV,CGK,cordier2012asymptotic,DJL,degond2011all,DLMV,dimarco2017study,haack2012all,Kle95,noelle2014weakly,Tang12,TorVaz,zeifang2020novel}
for a non-exhaustive list of references.

All of the aforementioned AP schemes, which were designed for either the isentropic or full compressible Euler system, are based on 
different flux splitting strategies.  A very simple and robust flux splitting, which was proposed in \cite{haack2012all} for the isentropic 
Euler equations and later extended to the rotating shallow water equations in \cite{LCK}, seems to be rather optimal in the sense that
it very accurately identifies and separates a linear stiff pressure term, which is then discretized implicitly. However, extending this flux
splitting to the full Euler equations presents significant challenges. 

In this paper, we propose an alternative way of accurately identifying and separating a stiff part of the full Euler system: we first
rewrite the studied system in a nonconservative form and then introduce a nonconservative hyperbolic splitting, which may be naturally
viewed as an extension of the flux splitting from \cite{haack2012all}. We integrate the nonconservative system using a SI method 
implemented as follows. The stiff part is approximated semi-implicitly using a simple second-order accurate central-differencing, and the 
nonstiff part is handled explicitly using a second-order {\color{black}path-conservative central-upwind (PCCU)} discretization. The resulting
SI approach is then realized in such a way that the pressure update consists of solving a Poisson-type elliptic equation: this is used to 
enforce the AP property.

However, the resulting SI method can only be applied to low-Mach-number regimes, where the magnitude of discontinuous waves is small.
For large Mach numbers, solving nonconservative formulations of the Euler equations in the presence of discontinuities typically leads to
nonphysical computed solutions, as was demonstrated in \cite{abgrall2010comment,HouLeF}. We therefore apply a dual formulation (DF)
approach, which has been recently introduced in \cite{CFKM} (for other recent works on DF methods, we refer the reader to
\cite{Abg2023,ACKM,AbgLiu,pidatella2019semi}), and solve the nonconservative and conservative formulations simultaneously. The latter one is
discretized in a fully-explicit manner using the second-order semi-discrete central-upwind (CU) discretization from \cite{KLin}. This way,
at each stage of a multi-stage SI time discretization (we have used the deferred correction (DeC) time discretization from
\cite{offner2025analysis}), two copies of the computed solution are evolved: one is AP, but nonconservative, while the second one is
conservative, but non-AP. Hence, upon the completion of each stage of the time evolution, we post-process the obtained solutions to
automatically ensure that in low-Mach-number regimes, the overall numerical solution is obtained by the AP nonconservative scheme, while in
large (intermediate)-Mach-number regimes, the solution reduces to the sharp and conservative solution obtained by the CU scheme.

The rest of the paper is organized as follows. In \S\ref{sec2}, we give the necessary preliminaries: we introduce the governing equations,
namely, non-dimensional conservative and nonconservative (primitive) formulations of the full Euler equations, discuss their
zero-Mach-number limit, and briefly review the considered DF framework. In \S\ref{sec4}, we introduce the novel AP DF finite-volume (DF-FV)
scheme for compressible Euler equations, providing a rigorous proof of its AP character. In \S\ref{sec6}, we demonstrate the performance of
the proposed scheme on a number of challenging numerical examples. Finally, concluding remarks can be found in \S\ref{sec5}.

\section{Preliminaries}\label{sec2}
The main goal of this section is to provide the background needed for presenting the proposed AP scheme. Specifically, we will describe:

\smallskip
\noindent
$\bullet$ a nonconservative reformulation of the Euler equations in terms of primitive variables, which allows for a natural decomposition
of the terms, which are stiff and nonstiff in the low-Mach-number regime;

\smallskip
\noindent
$\bullet$ a formal asymptotic analysis of the studied equations in low-Mach-number regimes, providing the incompressible system that the AP
scheme must accurately approximate in the zero-Mach-number limit;

\smallskip
\noindent
$\bullet$
a DF framework, in which both conservative and nonconservative formulations of the studied system are numerically solved simultaneously
exploiting the advantages of each of them in the corresponding Mach-number regimes.

\subsection{Conservative and Primitive Formulations}\label{sec21}
After suitable non-dimensionalization and rescaling, the two-dimensional (2-D) compressible Euler equations can be written in the
conservative form as
\begin{align}
	&\rho_t+\bnabla\!\cdot\!(\rho\bm u)=0,\label{2.1a}\\
	&(\rho\bm u)_t+\bnabla\!\cdot\!(\rho\bm u\otimes\bm u)+\frac{1}{\ve^2}\nabla p=\bm0,\label{2.1b}\\
	&E_t+\bnabla\!\cdot\!((E+p)\bm u)=0.\label{2.1c}
\end{align}
Here, $\rho$, $\bm u:=(u,v)^\top$, and $E$ denote the density, velocity, and total energy, respectively, $p$ is the pressure, $\ve$ is the 
reference Mach number, and the system is closed by the equation of state, which, in the case of an ideal gas, reads as
\begin{equation}
	E=\frac{p}{\gamma-1}+\frac{\ve^2}{2}\rho(u^2+v^2),
	\label{2.1d}
\end{equation}
with $\gamma$ being the specific heat ratio. This system is hyperbolic and features acoustic waves traveling with (maximum) speed
$|\bm u|+c$, where $c$ is the speed of sound given by $c:=\frac{1}{\ve}\sqrt{\gamma p/\rho}$. Notice that in low-Mach-number regimes, the
acoustic waves travel at a high (maximum) speed proportional to $1/\ve$.

For the purpose of deriving our AP scheme, we also consider an equivalent nonconservative formulation of the system \eref{2.1a}--\eref{2.1d}
in terms of the primitive variables $\rho$, $\bm u$, and $p$:
\begin{align}
	&\rho_t+\bnabla\!\cdot\!(\rho\bm u)=0,\label{2.5a}\\
	&\bm u_t+(\bm u\!\cdot\!\bnabla)\bm u+\frac{1}{\ve^2\rho}\nabla p=\bm0,\label{2.5b}\\
	&p_t+\bm u\!\cdot\!\nabla p+\gamma p\bnabla\!\cdot\!\bm u=0.\label{2.5c}
\end{align}
We emphasize that this formulation is equivalent to the conservative system \eref{2.1a}--\eref{2.1d} only for smooth solutions, and
numerical approximations of \eref{2.5a}--\eref{2.5c} typically converge to nonphysical solutions when discontinuities are present; see 
\cite{abgrall2010comment,HouLeF} for a detailed discussion.

\subsection{Zero-Mach-Number Limit}\label{sec22}
It is well-known (see, e.g., a formal analysis in \cite{Ala,MetSch}) that in the zero-Mach-number limit the compressible Euler equations
reduce to the incompressible ones. To illustrate this, we examine the formal behavior of the primitive system \eref{2.5a}--\eref{2.5c} as
$\ve\to0$. We substitute the formal expansions
\begin{equation*}
	\rho=\rho^{(0)}+\ve\rho^{(1)}+\ve^2\rho^{(2)}+\dots,\quad\bm u=\bm u^{(0)}+\ve\bm u^{(1)}+\ve^2\bm u^{(2)}+\dots,\quad
	p=p^{(0)}+\ve p^{(1)}+\ve^2p^{(2)}+\dots
\end{equation*}
into \eref{2.5a}--\eref{2.5c} and collect terms by powers of $\ve$. This yields
\begin{align}
	{\cal O}(\ve^{-2}):\quad&\nabla p^{(0)}=\uvec{0},\label{2.8f}\\
	{\cal O}(\ve^{-1}):\quad&\nabla p^{(1)}=\uvec{0},\label{2.9f}\\
	{\cal O}(1):\quad&\rho^{(0)}_t+\bnabla\!\cdot\!(\rho^{(0)}\bm u^{(0)})=0,\label{2.10f}\\
	&\bm u^{(0)}_t+(\bm u^{(0)}\!\cdot\!\bnabla)\bm u^{(0)}+\frac{1}{\rho^{(0)}}\nabla p^{(2)}=\bm0,\label{2.11f}\\
	&p^{(0)}_t+\gamma p^{(0)}\bnabla\!\cdot\!\bm u^{(0)}=0,\label{2.12f}\\
	{\cal O}(\ve):\quad&p^{(1)}_t+\gamma\big(p^{(1)}\bnabla\!\cdot\!\bm u^{(0)}+p^{(0)}\bnabla\!\cdot\!\bm u^{(1)}\big)=0.\label{2.13f}
\end{align}

It follows from \eref{2.8f}--\eref{2.9f} that $p^{(0)}(x,y,t)=p^{(0)}(t)$ and $p^{(1)}(x,y,t)=p^{(1)}(t)$ are spatially uniform. One can
also show that both $p^{(0)}$ and $p^{(1)}$ are independent of time, provided the following Dirichlet boundary condition holds:
\begin{equation}
	p=\,\xbar p_0+\ve\,\xbar p_1+\ve^2 p_2+\dots,\quad\forall(x,y)\in\partial\Omega,
	\label{3.2}			
\end{equation}
where $\xbar p_0>0$ and $\xbar p_1$ are constants, $p_k(x,y,t)$, $k\ge2$ are bounded functions, and $\Omega$ is the spatial domain with
boundary $\partial\Omega$. In fact, such a boundary condition implies $p^{(0)}\equiv\,\xbar p_0$ and $p^{(1)}\equiv\,\xbar p_1$. Using this
in \eref{2.12f}, one concludes that $\bnabla\!\cdot\!\bm u^{(0)}=0$, which further implies $\bnabla\!\cdot\!\bm u^{(1)}=0$ thanks to
\eref{2.13f}. Hence, the zero-Mach-number limiting equations are
\begin{align}
	&\rho^{(0)}_t+\bnabla\!\cdot\!(\rho^{(0)}\bm u^{(0)})=0,\label{3.3}\\
	&\bm u^{(0)}_t+(\bm u^{(0)}\!\cdot\!\bnabla)\bm u^{(0)}+\frac{1}{\rho^{(0)}}\nabla p^{(2)}=\bm0,\label{3.4}\\
	&\bnabla\!\cdot\!\bm u^{(0)}=0,\quad p^{(0)}\equiv\,\xbar p_0,\label{3.5}
\end{align}
and the correct low-Mach-number scaling for the ${\cal O}(\ve)$ terms is
\begin{equation}
	\bnabla\!\cdot\!\bm u^{(1)}=0,\quad p^{(1)}\equiv\,\xbar p_1.
	\label{3.6}
\end{equation}

\subsection{Dual Formulation (DF) Framework}\label{sec4a}
As outlined above, a new AP scheme for the compressible Euler equations will be constructed within the DF framework, in which both
conservative and primitive formulations are numerically solved simultaneously. The conservative form ensures proper handling of
discontinuities, while the primitive form is used to achieve the AP property in the zero-Mach-number limit. While the DF methodology is not
specifically developed to handle multiscale features, it lays the groundwork for the AP scheme developed in subsequent sections by enabling
an efficient and accurate treatment of the studied Euler equations in both compressible and nearly incompressible flow regimes.

We consider a general 2-D hyperbolic system of conservation laws,
\begin{equation}
	\bm U_t+\bm F(\bm U)_x+\bm G(\bm U)_y=\bm0,
	\label{4.1f}
\end{equation}
where $\bm U$ is the vector of conservative variables and $\bm F$ and $\bm G$ are fluxes, and rewrite it in an equivalent nonconservative
form
\begin{equation}
	\bm V_t+\widetilde{\bm F}(\bm V)_x+\widetilde{\bm G}(\bm V)_y=B(\bm V)\bm V_x+C(\bm V)\bm V_y,
	\label{4.2f}
\end{equation}
where $\bm V$ is the vector of nonconservative variables, $\widetilde{\bm F}$ and $\widetilde{\bm G}$ are the corresponding fluxes, and
$B(\bm V)\bm V_x$ and $C(\bm V)\bm V_y$ are the nonconservative product terms.

The key idea of the DF approach is to evolve the solutions of \eref{4.1f} and \eref{4.2f} simultaneously. A crucial step in DF-based 
methods is a post-processing, in which the evolved values of the nonconservative solution $\bm V$ are replaced with a more reliable 
approximation after the update. This step is necessary because long-term evolutions of $\bm V$ by directly solving the nonconservative 
system \eref{4.2f} may lead to nonphysical solutions in the presence of discontinuities, which typically appear when the studied Euler 
system is considered in the compressible (large/medium-Mach-number) regime.

The post-processing can be described as follows. After advancing the solutions of \eref{4.1f} and \eref{4.2f} from a certain time level $t$
to the next time level $t+\dt$, the evolved values of $\bm V(t+\dt)$ are replaced with
\begin{equation}
	r\big(\bm V(\bm U(t+\dt)),\bm V(t+\dt)\big),
	\label{4.5f}
\end{equation}
where $r$ is a suitable replacement function and $\bm V(\bm U)$ is a conservative-primitive variable transformation. In the simplest
nonstiff case, one can set
\begin{equation}
	r\big(\bm V(\bm U(t+\dt)),\bm V(t+\dt)\big)=\bm V(\bm U(t+\dt)).
	\label{4.6f}
\end{equation}
However, in the development of the AP scheme below, we will modify the post-processing \eref{4.6f} by taking an appropriate function $r$ in
\eref{4.5f} to ensure that in the nearly incompressible (low-Mach-number) regime the AP $\bm V$-solution is not overwritten by the non-AP
conservative one.

\section{Novel AP Scheme for Compressible Euler Equations}\label{sec4}
Building on the DF framework described in \S\ref{sec4a}, we now present a novel AP scheme designed for the compressible Euler equations
across all Mach-number regimes, from fully compressible to nearly incompressible flows. The proposed method couples the
conservative \eref{2.1a}--\eref{2.1d} and primitive \eref{2.5a}--\eref{2.5c} formulations of the system, ensuring stability, accuracy, and
consistency with the analytical asymptotic behavior as $\ve\to0$.

In this section, we provide a complete description of the proposed space-time discretization, starting with the primitive system
\eref{2.5a}–\eref{2.5c}. In \S\ref{sec411}, we outline its temporal integration, which is based on a new hyperbolic splitting and an SI
approach. In \S\ref{sec32}, we present a fully discrete second-order AP scheme for the primitive system, and in \S\ref{sec42}, we describe
the semi-discrete CU scheme employed for the conservative system. \S\ref{sec422} is devoted to the clarification of important
implementation details. Finally, in \S\ref{sec43}, we present the Mach-number dependent post-processing strategy used to reconcile primitive
and conservative variables.

\subsection{Novel AP Time Discretization of the Primitive System}\label{sec411}
We begin by providing a precise definition of an asymptotic-preserving (AP) time discretization in the context of the zero-Mach-number
limit.
\begin{definition}[AP time discretization]\label{def41}
	Assume that the {\color{black}semi-discrete (not discretized in space) solution at time $t^n$, $(\rho^n(x,y),\bm u^n(x,y),p^n(x,y))$,} can be
	expanded as
	\begin{equation}
		\begin{aligned}
			&\rho^n=\rho^{(0),n}+\ve\rho^{(1),n}+\ve^2\rho^{(2),n}+\dots,\quad\bm u^n=\bm u^{(0),n}+\ve\bm u^{(1),n}+\ve^2\bm u^{(2),n}+\dots,\\
			&p^n=p^{(0),n}+\ve p^{(1),n}+\ve^2p^{(2),n}+\dots,
		\end{aligned}
		\label{4.1}
	\end{equation}
	which is compatible with the asymptotic limits \eref{3.5} and \eref{3.6}, that is,
	\begin{align}
		&p^{(0),n}\equiv\,\xbar p_0,\quad p^{(1),n}\equiv\,\xbar p_1,\label{3.2f}\\
		&\bnabla\!\cdot\!\bm u^{(0),n}=0,\quad\bnabla\!\cdot\!\bm u^{(1),n}=0,\label{4.2}
	\end{align}
	and assume that the Dirichlet boundary condition \eref{3.2} holds, namely,
	\begin{equation}
		p\equiv\,\xbar p_0+\ve\,\xbar p_1+\ve^2p_2+\dots,\quad\forall(x,y)\in\partial\Omega.
		\label{3.4f}
	\end{equation}
	
	Let us consider a one-step time discretization that produces at time $t^{n+1}=t^n+\dt$ an approximation $(\rho^{n+1},\bm u^{n+1},p^{n+1})$.
	We say that such time discretization is AP if it admits an asymptotic expansion of the type \eref{4.1} and yields a consistent
	discretization of \eref{3.3}--\eref{3.6} as $\ve\to0$.
\end{definition}

To construct an AP time discretization, we first perform a hyperbolic splitting of the primitive system \eref{2.5a}–\eref{2.5c}, separating
the stiff pressure-driven terms from the nonstiff convective terms. This splitting enables the use of an SI integration strategy in which
the stiff and nonstiff terms are treated semi-implicitly and explicitly, respectively, ensuring uniform stability and asymptotic consistency
as $\ve\to0$.

\subsubsection{A New Hyperbolic Splitting}
We follow the idea from \cite{haack2012all,LCK} and split the nonconservative system into two parts corresponding to the slow and fast
dynamics as follows. We first define the time-dependent variables 
\begin{equation}
	\rho_{\max}(t):=\max\limits_{(x,y)\in\Omega}\rho(x,y,t),\quad p_{\min}(t):=\min\limits_{(x,y)\in\Omega}p(x,y,t),
	\label{4.3}
\end{equation}
and then add and subtract $\frac{1}{\ve^2\rho_{\max}}\nabla p$ and $\gamma p_{\min}\bnabla\cdot\bm u$ from \eref{2.5b} and \eref{2.5c},
respectively, to rewrite system \eref{2.5a}--\eref{2.5c} as follows:
\begin{align}
	&\rho_t+\bnabla\!\cdot\!(\rho\bm u)=0,\label{2.1}\\
	&\bm u_t+(\bm u\!\cdot\!\bnabla)\bm u+\frac{\rho_{\max}-\rho}{\ve^2\rho\rho_{\max}}\,\nabla p=-\frac{1}{\ve^2\rho_{\max}}
	\nabla p,\label{2.2}\\[0.5ex]
	&p_t+\bm u\!\cdot\!\nabla p+\gamma(p-p_{\min})\bnabla\!\cdot\!\bm u=-\gamma p_{\min}\bnabla\!\cdot\!\bm u.\label{2.3}
\end{align}
This system can be put in the following vector form:
\begin{equation}
	\bm V_t+\widetilde{\mF}(\bm V)_x+\widetilde{\mG}(\bm V)_y=\widetilde B(\bm V)\bm V_x+\widetilde C(\bm V)\bm V_y+\widehat B(\bm V)\bm V_x+
	\widehat C(\bm V)\bm V_y,
	\label{4.47}
\end{equation}
where $\bm V:=(\rho,u,v,p)^\top$, the nonlinear nonstiff (slow dynamics) part consists of the fluxes
\begin{equation*}
	\widetilde{\mF}(\bm V)=\Big(\rho u,\frac{u^2}{2},0,0\Big)^\top\quad\mbox{and}\quad
	\widetilde{\mG}(\bm V)=\Big(\rho v,0,\frac{v^2}{2},0\Big)^\top,
\end{equation*}
and the nonstiff nonconservative terms $\widetilde B(\bm V)\bm V_x+\widetilde C(\bm V)\bm V_y$ with matrices
\begin{equation*}
	\widetilde B=-\begin{pmatrix}0&0&0&0\\0&0&0&\dfrac{\rho_{\max}-\rho}{\ve^2\rho\rho_{\max}}\\0&v&0&0\\0&\gamma(p-p_{\min})&0&u\end{pmatrix}
	\quad\mbox{and}\quad
	\widetilde C=-\begin{pmatrix}0&0&0&0\\0&0&u&0\\0&0&0&\dfrac{\rho_{\max}-\rho}{\ve^2\rho\rho_{\max}}\\0&0&\gamma(p-p_{\min})&v\end{pmatrix},
\end{equation*}
while, the linear stiff (fast dynamics) part consists of the stiff nonconservative terms 
$\widehat B(\bm V)\bm V_x+\widehat C(\bm V)\bm V_y$
with matrices
\begin{equation*}
	\widehat B=-\begin{pmatrix}0&0&0&0\\0&0&0&\dfrac{1}{\ve^2\rho_{\max}}\\0&0&0&0\\0&\gamma p_{\min}&0&0\end{pmatrix}
	\quad\mbox{and}\quad
	\widehat C=-\begin{pmatrix}0&0&0&0\\0&0&0&0\\0&0&0&\dfrac{1}{\ve^2\rho_{\max}}\\0&0&\gamma p_{\min}&0\end{pmatrix}.
\end{equation*}

We stress that the subsystem
$\bm V_t+\widetilde{\mF}(\bm V)_x+\widetilde{\mG}(\bm V)_y=\widetilde B(\bm V)\bm V_x+\widetilde C(\bm V)\bm V_y$ is indeed nonstiff as the
eigenvalues of the matrices
$\frac{\partial\widetilde{\mF}}{\partial\bm V}(\uvec{V})-\widetilde B(\bm V)$ and
$\frac{\partial\widetilde{\mG}}{\partial\bm V}(\uvec{V})-\widetilde C(\bm V)$, are $\left\{u\pm\tilde c,u,u\right\}$ and
$\left\{v\pm\tilde c,v,v\right\}$, respectively, with 
\begin{equation}
	\tilde c:=\frac{1}{\ve}\sqrt{\gamma\frac{(\rho_{\max}-\rho)(p-p_{\min})}{\rho\rho_{\max}}},
	\label{4.48f}
\end{equation}
which are real and of size ${\cal O}(1)$ thanks to the definitions of $\rho_{\max}$ and $p_{\min}$ in \eref{4.3} and to the asymptotic
analysis in \S\ref{sec22}, which ensure that
\begin{equation}
	0\le\rho_{\max}-\rho={\cal O}(1)\quad\mbox{and}\quad0\le p-p_{\min}={\cal O}(\ve^2).
	\label{4.7}
\end{equation}

In the next subsection, we will utilize this splitting and design an AP time discretization based on an explicit approximation of the
nonstiff subsystem and an SI discretization of the stiff terms on the right-hand sides (RHSs) of \eref{2.2} and \eref{2.3}.
{\color{black}
	\begin{remark}
		It should be pointed out that extensions of the proposed splitting to more general equations of state would require a re-examination of the
		primitive-variable formulation.
\end{remark}}

\subsubsection{First-Order AP SI Time Discretization}
The simplest first-order AP SI time discretization of the system \eref{4.47} reads as
\begin{equation}
	\begin{aligned}
		&\frac{\rho^{n+1}-\rho^n}{\dt}+\bnabla\!\cdot\!(\rho^n\bm u^n)=0,\\
		&\frac{\bm u^{n+1}-\bm u^n}{\dt}+(\bm u^n\!\cdot\!\bnabla)\bm u^n+\frac{\rho_{\max}^n-\rho^n}{\ve^2\rho^n\rho_{\max}^n}\,\nabla p^n+
		\frac{1}{\ve^2\rho_{\max}^n}\nabla p^{n+1}=\bm0,\\
		&\frac{p^{n+1}-p^n}{\dt}+\bm u^n\!\cdot\!\nabla p^n+\gamma(p^n-p_{\min}^n)\bnabla\!\cdot\!\bm u^n+\gamma p_{\min}^n
		\bnabla\!\cdot\!\bm u^{n+1}=0,
	\end{aligned}
	\label{4.8}
\end{equation}
which can also be written in the following vector form:
\begin{equation*}
	\frac{\bm V^{n+1}-\bm V^n}{\dt}+\bm{{\cal R}}^n+\bm{{\cal L}}^{n,n+1}=\bm0,
\end{equation*}
where $\bm V^n:=(\rho^n,\bm u^n,p^n)^\top\approx\bm V(t^n)$, $\rho_{\max}^n:=\rho_{\max}(t^n)$, $p_{\min}^n:=p_{\min}(t^n)$, and 
\begin{align}
	&\begin{aligned}
		\bm{{\cal R}}^n&:=\widetilde{\mF}(\bm V^n)_x+\widetilde{\mG}(\bm V^n)_y-\widetilde B(\bm V^n)\bm V_x^n-\widetilde C(\bm V^n)\bm V_y^n\\
		&\hspace*{0.15cm}=\begin{pmatrix}({\cal R}^\rho)^n\\(\bm{{\cal R}}^{\bm u})^n\\({\cal R}^p)^n\end{pmatrix}=
		\begin{pmatrix}\bnabla\!\cdot\!(\rho^n\bm u^n)\\(\bm u^n\!\cdot\!\bnabla)\bm u^n+\dfrac{\rho_{\max}^n-\rho^n}{\ve^2\rho^n\rho_{\max}^n}
			\nabla p^n\\[1.5ex]
			\bm u^n\!\cdot\!\nabla p^n+\gamma(p^n-p_{\min}^n)\bnabla\!\cdot\!\bm u^n\end{pmatrix},
	\end{aligned}\label{3.12f}\\
	&\bm{{\cal L}}^{n,n+1}:=-\widehat B(\bm V^n)\bm V_x^{n+1}-\widehat C(\bm V^n)\bm V_y^{n+1}=
	\begin{pmatrix}({\cal L}^\rho)^{n,n+1}\\(\bm{{\cal L}}^{\bm u})^{n,n+1}\\({\cal L}^p)^{n,n+1}\end{pmatrix}
	=\begin{pmatrix}0\\\dfrac{\nabla p^{n+1}}{\ve^2\rho_{\max}^n}\\[1.5ex]
		\gamma p_{\min}^n\bnabla\!\cdot\!\bm u^{n+1}\end{pmatrix}.\label{3.13f}
\end{align}

Notice that in \eref{3.13f}, $\bm{{\cal L}}^{n,n+1}$ is discretized in an SI (rather than fully implicit) manner, where both $\widehat B$
and $\widehat C$ are evaluated at $\bm V^n$ (and not at $\bm V^{n+1}$), which prevents from numerically solving complicated systems of
nonlinear algebraic equations.

We shall now prove that the time discretization \eref{4.8} is indeed AP, provided that the time step is computed based on the wave speeds 
of the nonstiff subsystem, that is, provided that
\begin{equation}
	\dt=K_{\rm CFL}\min\left\{\frac{\dx}{\max\limits_{(x,y)\in\Omega}\big(|u|+\tilde c\big)},\,
	\frac{\dy}{\max\limits_{(x,y)\in\Omega}\big(|v|+\tilde c\big)}\right\},
	\label{4.10}
\end{equation}
where $K_{\rm CFL}$ is a CFL number and $\dx$ and $\dy$ are mesh sizes used in the spatial discretization. Notice that selecting the time
step $\dt$ according to \eref{4.10} makes it asymptotically independent of $\ve$ as, according to \eref{4.48f}--\eref{4.7},
$\tilde c={\cal O}(1)$.

\begin{theorem}\label{thm31}
	The first-order SI time discretization \eref{4.8} is AP according to Definition \ref{def41}, provided that $\dt$ is computed as in
	\eref{4.10}.
\end{theorem}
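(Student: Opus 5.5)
The plan is to reformulate the SI scheme \eref{4.8} so that the pressure update is an elliptic problem, and then run the formal Hilbert expansion order by order. First, substitute the velocity equation into the pressure equation. Solving the second line of \eref{4.8} for $\bm u^{n+1}$ gives
\begin{equation*}
\bm u^{n+1}=\bm u^n-\dt\,(\bm{{\cal R}}^{\bm u})^n-\frac{\dt}{\ve^2\rho_{\max}^n}\nabla p^{n+1}.
\end{equation*}
Taking the divergence and inserting the result into the third line yields a Poisson-type equation for $p^{n+1}$:
\begin{equation*}
p^{n+1}-\frac{\gamma p_{\min}^n\dt^2}{\ve^2\rho_{\max}^n}\Delta p^{n+1}=p^n-\dt({\cal R}^p)^n-\gamma p_{\min}^n\dt\,\bnabla\!\cdot\!\big(\bm u^n-\dt(\bm{{\cal R}}^{\bm u})^n\big),
\end{equation*}
supplemented by the Dirichlet data \eref{3.4f}. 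Because $p_{\min}^n>0$ and $\rho_{\max}^n>0$, this problem is uniquely solvable. Its solution can therefore be expanded in powers of $\ve$, and then $\bm u^{n+1}$ and $\rho^{n+1}$ inherit expansions of the form \eref{4.1}. This gives the first requirement of Definition \ref{def41}.

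Next, I will check the orders of the stiff explicit terms using the assumptions \eref{3.2f}--\eref{4.2}. Since $p^{(0),n}$ and $p^{(1),n}$ are constants, we have $\nabla p^n=\ve^2\nabla p^{(2),n}+\dots$. Consequently, the term $\frac{\rho_{\max}^n-\rho^n}{\ve^2\rho^n\rho_{\max}^n}\nabla p^n$ is ${\cal O}(1)$, and $p^n-p_{\min}^n={\cal O}(\ve^2)$ by \eref{4.7}. Thus the only stiff contribution is $\frac{1}{\ve^2\rho_{\max}^n}\nabla p^{n+1}$. The assumption \eref{4.10} guarantees that $\dt$ is independent of $\ve$, so $\dt$ does not interfere with the $\ve$-ordering. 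I also need $\rho_{\max}^n$ and $p_{\min}^n$ to be expanded. Since $p_{\min}^n=\xbar p_0+\ve\,\xbar p_1+{\cal O}(\ve^2)$, I will set $p_{\min}^{(0)}=\xbar p_0$.

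Then I will collect powers of $\ve$ in the elliptic equation. At ${\cal O}(\ve^{-2})$ we get $\Delta p^{(0),n+1}=0$ with boundary data $\xbar p_0$, hence $p^{(0),n+1}\equiv\xbar p_0$. At ${\cal O}(\ve^{-1})$ we similarly obtain $p^{(1),n+1}\equiv\xbar p_1$. The velocity equation at orders $\ve^{-2}$ and $\ve^{-1}$ then holds automatically.

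At ${\cal O}(1)$, the pressure equation reads
\begin{equation*}
\frac{p^{(0),n+1}-p^{(0),n}}{\dt}+\gamma\,\xbar p_0\,\bnabla\!\cdot\!\bm u^{(0),n+1}=0,
\end{equation*}
where the remaining terms vanish: $\bm u^n\cdot\nabla p^n={\cal O}(\ve^2)$ and $\gamma(p^n-p_{\min}^n)\bnabla\cdot\bm u^n={\cal O}(\ve^2)$. Since the first term vanishes, we obtain $\bnabla\!\cdot\!\bm u^{(0),n+1}=0$.

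At ${\cal O}(\ve)$, we analogously obtain $\bnabla\!\cdot\!\bm u^{(1),n+1}=0$. Here I use $\bnabla\cdot\bm u^{(0),n+1}=0$, the fact that $p_{\min}^{(1)}$ is constant, and $p^{(1),n+1}=p^{(1),n}$.

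Finally, at ${\cal O}(1)$ the density equation gives $\frac{\rho^{(0),n+1}-\rho^{(0),n}}{\dt}+\bnabla\cdot(\rho^{(0),n}\bm u^{(0),n})=0$, and the velocity equation gives
\begin{equation*}
\frac{\bm u^{(0),n+1}-\bm u^{(0),n}}{\dt}+(\bm u^{(0),n}\!\cdot\!\bnabla)\bm u^{(0),n}+\frac{\rho_{\max}^{(0),n}-\rho^{(0),n}}{\rho^{(0),n}\rho_{\max}^{(0),n}}\nabla p^{(2),n}+\frac{1}{\rho_{\max}^{(0),n}}\nabla p^{(2),n+1}=\bm0.
\end{equation*}
Together these form a consistent, first-order, semi-implicit projection-type discretization of \eref{3.3}--\eref{3.6}. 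The two pressure-gradient terms combine to $\frac{1}{\rho^{(0)}}\nabla p^{(2)}+{\cal O}(\dt)$, with $p^{(2),n+1}$ acting as the Lagrange multiplier that enforces the divergence constraint.

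The main obstacle I expect is the ${\cal O}(\ve^{-2})$ and ${\cal O}(\ve^{-1})$ steps. They require an honest justification that the elliptic problem forces $p^{(0),n+1}$ and $p^{(1),n+1}$ to be constant; this uses uniqueness for the Laplace equation with constant Dirichlet data. They also require care with the expansion of the nonsmooth quantities $\rho_{\max}$ and $p_{\min}$, together with the interplay of the $\dt^2/\ve^2$ coefficient. I would handle this by multiplying the elliptic equation by $\ve^2$ and matching coefficients before dividing.
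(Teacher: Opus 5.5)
Your proposal is correct and follows essentially the same route as the paper's proof. You eliminate $\bm u^{n+1}$ to get the Poisson-type equation for $p^{n+1}$ with Dirichlet data \eref{3.4f}, deduce $p^{(0),n+1}\equiv\xbar p_0$ and $p^{(1),n+1}\equiv\xbar p_1$, read off $\bnabla\!\cdot\!\bm u^{(0),n+1}=\bnabla\!\cdot\!\bm u^{(1),n+1}=0$ from the pressure update, and identify the leading-order velocity update as \eref{4.16}. The one difference is that you justify the constant pressure modes by multiplying by $\ve^2$, matching orders and using uniqueness for the Dirichlet Laplace problem, where the paper cites Kato's perturbation theory; note that what legitimizes the expansion is the invertibility of the leading-order Dirichlet Laplacian, not unique solvability for each fixed $\ve$.
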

\begin{proof}
	We begin by formally showing that the computed solution $\rho^{n+1}$, $\bm u^{n+1}$, $p^{n+1}$ admits an expansion of the type \eref{4.1}
	satisfying \eref{3.2f} and \eref{4.2} in the limit as $\ve\to0$. We substitute the corresponding expansion of the numerical solution at 
	time $t^n$ into the scheme \eref{4.8} and use \eref{3.2f}--\eref{4.2} to obtain
	\begin{align}
		&\begin{aligned}
			\rho^{n+1}&=\rho^{(0),n}+\ve\rho^{(1),n}+\ve^2\rho^{(2),n}-\dt\bnabla\!\cdot\!\big(\rho^{(0),n}\bm u^{(0),n}\big)\\
			&-\ve \dt \big[\bnabla\!\cdot\!\big(\rho^{(1),n}\bm u^{(0),n}\big)+\bnabla\!\cdot\!\big(\rho^{(0),n}\bm u^{(1),n}\big)\big]\\
			&-\ve^2 \dt \big[\bnabla\!\cdot\!\big(\rho^{(2),n}\bm u^{(0),n}\big)+\bnabla\!\cdot\!\big(\rho^{(1),n}\bm u^{(1),n}\big)+\bnabla\!\cdot\!\big(\rho^{(0),n}\bm u^{(2),n}\big)\big]+{\cal O}(\ve^3),
			\label{4.11}
		\end{aligned}\\
		&\begin{aligned}
			\bm u^{n+1}&=\bm u^{(0),n}+\ve\bm u^{(1),n}+\ve^2\bm u^{(2),n}-\dt\Big[(\bm u^{(0),n}\!\cdot\!\bnabla)\bm u^{(0),n}+
			\frac{\rho_{\max}^n-\rho^{(0),n}}{\rho^{(0),n}\rho_{\max}^n}\,\nabla p^{(2),n}\Big]\\
			&-\ve\dt\Big[(\bm u^{(1),n}\!\cdot\!\bnabla)\bm u^{(0),n}+(\bm u^{(0),n}\!\cdot\!\bnabla)\bm u^{(1),n}-
			\frac{\rho^{(1),n}}{\rho^{(0),n}\rho_{\max}^n}\,\nabla p^{(2),n}-\frac{\rho^{(0),n}}{\rho^{(0),n}\rho_{\max}^n}\,\nabla p^{(3),n}\Big]\\
			&-\ve^2\dt\Big[(\bm u^{(2),n}\!\cdot\!\bnabla)\bm u^{(0),n}+(\bm u^{(1),n}\!\cdot\!\bnabla)\bm u^{(1),n}+
			(\bm u^{(0),n}\!\cdot\!\bnabla)\bm u^{(2),n}\\
			&\hspace*{-0.3cm}
			-\frac{\rho^{(2),n}}{\rho^{(0),n}\rho_{\max}^n}\,\nabla p^{(2),n}-\frac{\rho^{(1),n}}{\rho^{(0),n}\rho_{\max}^n}\,\nabla p^{(3),n}
			-\frac{\rho^{(0),n}}{\rho^{(0),n}\rho_{\max}^n}\,\nabla p^{(4),n}\Big]
			-\frac{\dt}{\ve^2 \rho_{\max}^n}\,\nabla p^{n+1}+{\cal O}(\ve^3),\label{4.12}
		\end{aligned}\\[0.8ex]
		&p^{n+1}=\,\xbar p_0+\ve\,\xbar p_1+\ve^2 p^{(2),n}-\ve^2 \dt \bm u^{(0),n}\!\cdot\!\nabla p^{(2),n} -\dt\gamma p_{\min}^n\bnabla\!\cdot\!\bm u^{n+1}+{\cal O}(\ve^3).\label{4.13}
	\end{align}
	Thanks to the explicit nature of the density update in \eref{4.11}, we conclude that indeed $\rho^{n+1}$ admits the required asymptotic
	expansion $\rho^{n+1}=\rho^{(0),n+1}+\ve\rho^{(1),n+1}+\ve\rho^{(2),n+1}+\dots$, where the different terms of the expansion are obtained by
	collecting corresponding powers of $\ve$:
	\begin{equation}
		\begin{aligned}
			\rho^{(0),n+1}&=\rho^{(0),n}-\dt\,\bnabla\!\cdot\!(\rho^{(0),n}\bm u^{(0),n}),\\
			\rho^{(1),n+1}&=\rho^{(1),n}-\dt \big[\bnabla\!\cdot\!\big(\rho^{(1),n}\bm u^{(0),n}\big)+\bnabla\!\cdot\!\big(\rho^{(0),n}\bm u^{(1),n}\big)\big],\\
			\rho^{(2),n+1}&=\rho^{(2),n}-\dt \big[\bnabla\!\cdot\!\big(\rho^{(2),n}\bm u^{(0),n}\big)+\bnabla\!\cdot\!\big(\rho^{(1),n}\bm u^{(1),n}\big)+\bnabla\!\cdot\!\big(\rho^{(0),n}\bm u^{(2),n}\big)\big].
		\end{aligned}
		\label{3.18}
	\end{equation}
	In order to show that also $p^{n+1}$ admits an expansion of the same type, we take the divergence of the velocity equation \eref{4.12},
	substitute $\bnabla\!\cdot\!\muu^{n+1}$ into the pressure equation \eref{4.13}, and use the divergence-free assumption \eref{4.2} to obtain
	$$
	\begin{aligned}
		p^{n+1}&=\,\xbar p_0+\ve\,\xbar p_1+\ve^2p^{(2),n}-\ve^2\dt\bm u^{(0),n}\!\cdot\!\nabla p^{(2),n}\\
		&-\dt\gamma p_{\min}^n\bnabla\!\cdot\!\bm u^{(0),n}-\ve\dt\gamma p_{\min}^n\bnabla\!\cdot\!\bm u^{(1),n}-
		\ve^2\dt\gamma p_{\min}^n\bnabla\!\cdot\!\bm u^{(2),n}\\
		&+(\dt)^2\gamma p_{\min}^n\bnabla\!\cdot\!\Big[(\bm u^{(0),n}\!\cdot\!\bnabla)\bm u^{(0),n}+
		\frac{\rho_{\max}^n-\rho^{(0),n}}{\rho^{(0),n}\rho_{\max}^n}\,\nabla p^{(2),n}\Big]\\
		&+\ve(\dt)^2\gamma p_{\min}^n\bnabla\!\cdot\!\Big[(\bm u^{(1),n}\!\cdot\!\bnabla)\bm u^{(0),n}+(\bm u^{(0),n}\!\cdot\!\bnabla)\bm u^{(1),n}-
		\frac{\rho^{(1),n}}{\rho^{(0),n}\rho_{\max}^n}\,\nabla p^{(2),n}-\frac{\rho^{(0),n}}{\rho^{(0),n}\rho_{\max}^n}\,\nabla p^{(3),n}\Big]\\
		&+\ve^2 (\dt)^2\gamma p_{\min}^n\bnabla\!\cdot\!\Big[(\bm u^{(2),n}\!\cdot\!\bnabla)\bm u^{(0),n}+
		(\bm u^{(1),n}\!\cdot\!\bnabla)\bm u^{(1),n}+(\bm u^{(0),n}\!\cdot\!\bnabla)\bm u^{(2),n}\\
		&\hspace*{0.5cm}-\frac{\rho^{(2),n}}{\rho^{(0),n}\rho_{\max}^n}\,\nabla p^{(2),n}-
		\frac{\rho^{(1),n}}{\rho^{(0),n}\rho_{\max}^n}\,\nabla p^{(3),n}-\frac{\rho^{(0),n}}{\rho^{(0),n}\rho_{\max}^n}\,\nabla p^{(4),n}\Big]
		+\frac{(\dt)^2\gamma p_{\min}^n}{\ve^2 \rho_{\max}^n}\,\Delta p^{n+1}+{\cal O}(\ve^3),
	\end{aligned}
	$$
	which implies that $p^{n+1}$ is the solution of the elliptic equation
	$$
	-\Delta p^{n+1}+\frac{\ve^2\rho_{\max}^n}{(\dt)^2\gamma p_{\min}^n}\,p^{n+1}={\cal O}(\ve^2),
	$$
	subject to the boundary condition \eref{3.4f}. According to the theory of perturbed linear operators \cite{kato1966perturbation}, one can
	conclude that 
	\begin{align}
		&p^{n+1}=p^{(0),n+1}+\ve p^{(1),n+1}+\ve^2p^{(2),n+1}+\dots,\label{3.20f}\\
		&p^{(0),n+1}=p^{(0),n}\equiv\,\xbar p_0,\quad p^{(1),n+1}=p^{(1),n}\equiv\,\xbar p_1,\label{4.14}
	\end{align}
	and hence
	$$
	\nabla p^{n+1}=\ve^2\nabla p^{(2),n+1}+\ve^3\nabla p^{(3),n+1}+\ve^4\nabla p^{(4),n+1}+\dots,
	$$
	which we substitute into \eref{4.12} to obtain the velocity expansion
	$$
	\bm u^{n+1}=\bm u^{(0),n+1}+\ve\bm u^{(1),n+1}+\ve^2\bm u^{(2),n+1}+\dots
	$$
	with
	\begin{equation}
		\begin{aligned}
			\bm u^{(0),n+1}&=\bm u^{(0),n}-\dt\Big[(\bm u^{(0),n}\!\cdot\!\bnabla)\bm u^{(0),n}+
			\frac{\rho_{\max}^n-\rho^{(0),n}}{\rho^{(0),n}\rho_{\max}^n}\,\nabla p^{(2),n}\Big]-\frac{\dt}{\rho_{\max}^n}\,\nabla p^{(2),n+1},\\
			\bm u^{(1),n+1}&=\bm u^{(1),n}-\dt\Big[(\bm u^{(1),n}\!\cdot\!\bnabla)\bm u^{(0),n}+(\bm u^{(0),n}\!\cdot\!\bnabla)\bm u^{(1),n}\\
			&\hspace*{0.6cm}-\frac{\rho^{(1),n}}{\rho^{(0),n}\rho_{\max}^n}\,\nabla p^{(2),n}-
			\frac{\rho^{(0),n}}{\rho^{(0),n}\rho_{\max}^n}\,\nabla p^{(3),n}\Big]-\frac{\dt}{\rho_{\max}^n}\,\nabla p^{(3),n+1},\\
			\bm u^{(2),n+1}&=\bm u^{(2),n}-\dt\Big[(\bm u^{(2),n}\!\cdot\!\bnabla)\bm u^{(0),n}+(\bm u^{(1),n}\!\cdot\!\bnabla)\bm u^{(1),n}
			+(\bm u^{(0),n}\!\cdot\!\bnabla)\bm u^{(2),n}\\
			&\hspace*{0.6cm}-\frac{\rho^{(2),n}}{\rho^{(0),n}\rho_{\max}^n}\,\nabla p^{(2),n}
			-\frac{\rho^{(1),n}}{\rho^{(0),n}\rho_{\max}^n}\,\nabla p^{(3),n}-\frac{\rho^{(0),n}}{\rho^{(0),n}\rho_{\max}^n}\,\nabla p^{(4),n}\Big]-
			\frac{\dt}{\rho_{\max}^n}\,\nabla p^{(4),n+1}.
		\end{aligned}
		\label{3.22}
	\end{equation}
	
	We now need to show that \eref{3.2f} and \eref{4.2} hold for the updated solution, along with the consistency of the scheme \eref{4.8} with
	\eref{3.3} and \eref{3.4} as $\ve\to0$. We have already shown that \eref{3.2f} holds; see \eref{4.14}. The divergence-free conditions
	\eref{4.2} can be deduced from the pressure update \eref{4.13}, which in view of the obtained results yields
	$$
	\begin{aligned}
		\ve^2p^{(2),n+1}=\ve^2p^{(2),n}&-\ve^2\dt\bm u^{(0),n}\!\cdot\!\nabla p^{(2),n}-\dt\gamma p_{\min}^n\bnabla\!\cdot\!\bm u^{(0),n+1}\\
		&-\ve\dt\gamma p_{\min}^n\bnabla\!\cdot\!\bm u^{(1),n+1}-\ve^2\dt\gamma p_{\min}^n\bnabla\!\cdot\!\bm u^{(2),n+1}+{\cal O}(\ve^3).
	\end{aligned}
	$$
	Collecting the power-like terms of $\ve$, we deduce $\bnabla\!\cdot\!\bm u^{(0),n+1}=\bnabla\!\cdot\!\bm u^{(1),n+1}=0$.
	
	The consistency with \eref{3.3} immediately follows from the first equation in \eref{3.18}. To show the consistency with \eref{3.4}, we
	rewrite the first equation in \eref{3.22} as
	\begin{equation}
		\bm u^{(0),n+1}=\bm u^{(0),n}-\dt\bigg[(\bm u^{(0),n}\!\cdot\!\bnabla)\bm u^{(0),n}+\frac{\nabla p^{(2),n}}{\rho^{(0),n}}\bigg]
		-\frac{\dt}{\rho_{\max}^n}\left(\nabla p^{(2),n+1}-\nabla p^{(2),n}\right),
		\label{4.16}
	\end{equation}
	which is a consistent discretization of \eref{3.4}.
	
	We remark that since the zeroth and first modes of the pressure are constant, the evolution of the pressure in the zero-Mach-number limit
	essentially consists of the evolution of the second mode. Thus, $\nabla p^{(2),n+1}-\nabla p^{(2),n}\approx{\cal O}(\dt)$ and the last term
	in \eref{4.16}, in fact, represents a temporal diffusion term, which is proportional to ${\cal O}((\dt)^2)$. We also remark that according
	to \eref{4.10}, the time step $\dt$ is asymptotically independent of $\ve$.
\end{proof}

\subsubsection{Second-Order AP SI Time Discretization}\label{sec313}
We now introduce a second-order AP SI time discretization, which is based on the DeC approach, which was originally introduced in
\cite{fox1949some}. Our second-order AP SI-DeC time discretization is directly related to the IMEX-DeC methods presented in
\cite{offner2025analysis} and based on the DeC formulation introduced in \cite{Decremi}; see also
\cite{micalizzi2025efficient,micalizzi2024new}.

According to the second-order AP SI-DeC time discretization, the solution of \eref{4.47} is evolved from $t=t^n$ to $t=t^{n+1}$ through the
following two stages:
\begin{equation}
	\begin{aligned}
		&\bm V^*=\bm V^n-\dt\bm{{\cal R}}^n-\dt\bm{{\cal L}}^{n,*},\\
		&\bm V^{n+1}=\bm V^n-\frac{\dt}{2}\big[\bm{{\cal R}}^n+\bm{{\cal R}}^*\big]-\frac{\dt}{2}\big[\bm{{\cal L}}^{n,n}-\bm{{\cal L}}^{*,*}\big]
		-\dt\bm{{\cal L}}^{*,n+1},
	\end{aligned}
	\label{3.19}
\end{equation}
where the upper index $*$ is associated with the intermediate solution $\bm V^*$, and the definitions of the operators $\bm{{\cal R}}^*$ and
$\bm{{\cal L}}^{n,n}$, $\bm{{\cal L}}^{*,*}$, and $\bm{{\cal L}}^{*,n+1}$ are analogous to those given in \eref{3.12f} and \eref{3.13f},
respectively.

The scheme \eref{3.19} can be equivalently written as
\begin{equation}
	\begin{aligned}
		\rho^*=&~\rho^n-\dt\bnabla\!\cdot\!(\rho^n\bm u^n),\\
		\bm u^*=&~\bm u^n-\dt\Big[(\bm u^n\!\cdot\!\bnabla)\bm u^n+\frac{\rho_{\max}^n-\rho^n}{\ve^2\rho^n\rho_{\max}^n}\,\nabla p^n\Big]-
		\frac{\dt}{\ve^2\rho_{\max}^n}\nabla p^*,\\
		p^*=&~p^n-\dt\big[\bm u^n\!\cdot\!\nabla p^n+\gamma(p^n-p_{\min}^n)\bnabla\!\cdot\!\bm u^n\big]-\dt\gamma p_{\min}^n\bnabla\!\cdot\!\bm u^*,
	\end{aligned}
	\label{3.20}
\end{equation}
and
\begin{equation}
	\begin{aligned}
		\rho^{n+1}&=\rho^n-\frac{\dt}{2}\big[\bnabla\!\cdot\!(\rho^n\bm u^n)+\bnabla\!\cdot\!(\rho^*\bm u^*)\big],\\
		\bm u^{n+1}&=\bm u^n-\frac{\dt}{2}\Big[(\bm u^n\!\cdot\!\bnabla)\bm u^n+(\bm u^*\!\cdot\!\bnabla)\bm u^*
		+\frac{\rho_{\max}^n-\rho^n}{\ve^2\rho^n\rho_{\max}^n}\,\nabla p^n+\frac{\rho_{\max}^*-\rho^*}{\ve^2\rho^*\rho_{\max}^*}\,\nabla p^*\Big]\\
		&\hspace*{1.05cm}-\frac{\dt}{2\ve^2}\left(\frac{\nabla p^n}{\rho^n_{\max}}-\frac{\nabla p^*}{\rho^*_{\max}}\right)-
		\frac{\dt}{\ve^2\rho_{\max}^*}\nabla p^{n+1},\\[0.5ex]
		p^{n+1}&=p^n-\frac{\dt}{2}\big[\bm u^n\!\cdot\!\nabla p^n+\bm u^*\!\cdot\!\nabla p^*+\gamma(p^n-p_{\min}^n)\bnabla\!\cdot\!\bm u^n
		+\gamma(p^*-p_{\min}^*)\bnabla\!\cdot\!\bm u^*\big]\\[0.5ex]
		&\hspace*{1.0cm}-\frac{\dt}{2}\,\gamma\big(p_{\min}^n\bnabla\!\cdot\!\bm u^n-p_{\min}^*\bnabla\!\cdot\!\bm u^*\big)
		-\dt\gamma p_{\min}^*\bnabla\!\cdot\!\bm u^{n+1}.
	\end{aligned}
	\label{3.21}
\end{equation}

This second-order time discretization is indeed AP as shown in the next theorem.
\begin{theorem}
	The second-order SI-DeC discretization \eref{3.20}--\eref{3.21} is AP according to Definition \ref{def41}, provided that $\dt$ is computed
	according to \eref{4.10}.
\end{theorem}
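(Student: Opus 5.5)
The plan is to reduce the second-order scheme to two applications of the argument in the proof of Theorem \ref{thm31}. The first stage \eref{3.20} is exactly the first-order SI scheme \eref{4.8}, with $(\rho^*,\bm u^*,p^*)$ in place of $(\rho^{n+1},\bm u^{n+1},p^{n+1})$. Theorem \ref{thm31} therefore applies verbatim. It shows that $\bm V^*$ admits an expansion of type \eref{4.1} with $p^{(0),*}\equiv\xbar p_0$, $p^{(1),*}\equiv\xbar p_1$ and $\bnabla\cdot\bm u^{(0),*}=\bnabla\cdot\bm u^{(1),*}=0$. This holds under the time step \eref{4.10} and the boundary condition \eref{3.4f} imposed at the intermediate stage.

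Consequently, $\rho^*_{\max}-\rho^*={\cal O}(1)$ and $p^*-p^*_{\min}={\cal O}(\ve^2)$. In particular, $p^*_{\min}=\xbar p_0+\ve\xbar p_1+{\cal O}(\ve^2)$, and likewise for $p^n_{\min}$. The point is that all the stiff-looking explicit terms in \eref{3.21} are bounded. The factors $\frac{\rho_{\max}-\rho}{\ve^2\rho\rho_{\max}}\nabla p$ at levels $n$ and $*$ are ${\cal O}(1)$ because $\nabla p^n,\nabla p^*={\cal O}(\ve^2)$. The correction $\frac{1}{2\ve^2}\big(\nabla p^n/\rho^n_{\max}-\nabla p^*/\rho^*_{\max}\big)$ is ${\cal O}(1)$ for the same reason. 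The same holds for the explicit pressure terms. Moreover, by the divergence-free properties at levels $n$ and $*$, the term $\gamma(p^n_{\min}\bnabla\cdot\bm u^n-p^*_{\min}\bnabla\cdot\bm u^*)$ is ${\cal O}(\ve^2)$.

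Next I substitute the expansions into \eref{3.21}. The density update is explicit, so its expansion is read off directly, with leading order $\rho^{(0),n+1}=\rho^{(0),n}-\frac{\dt}{2}[\bnabla\cdot(\rho^{(0),n}\bm u^{(0),n})+\bnabla\cdot(\rho^{(0),*}\bm u^{(0),*})]$. This is a consistent (Heun-type) discretization of \eref{3.3}. For the pressure, I write the velocity update as $\bm u^{n+1}=\bm A-\frac{\dt}{\ve^2\rho^*_{\max}}\nabla p^{n+1}$, where $\bm A={\cal O}(1)$ collects explicit terms. Taking the divergence and inserting the result into the pressure update gives, exactly as in the first-order proof,
$$-\Delta p^{n+1}+\frac{\ve^2\rho^*_{\max}}{(\dt)^2\gamma p^*_{\min}}\,p^{n+1}=\frac{\ve^2\rho^*_{\max}}{(\dt)^2\gamma p^*_{\min}}\big(\xbar p_0+\ve\xbar p_1\big)+{\cal O}(\ve^2).$$
To check this, note that all the explicit contributions to $p^{n+1}$ other than $p^n$ are of the form ${\cal O}(\ve^2)$ or $\dt\,\gamma p^*_{\min}\bnabla\cdot\bm A$. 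The term $\bnabla\cdot\bm A$ is ${\cal O}(1)$, but after multiplication by $\ve^2/(\dt^2\gamma p^*_{\min})$ it becomes ${\cal O}(\ve^2)$. Together with the Dirichlet data \eref{3.4f}, the perturbation argument \cite{kato1966perturbation} then yields $p^{(0),n+1}\equiv\xbar p_0$ and $p^{(1),n+1}\equiv\xbar p_1$. Hence $\nabla p^{n+1}={\cal O}(\ve^2)$, and substituting back gives a regular expansion of $\bm u^{n+1}$.

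It remains to obtain the divergence constraints and consistency. Returning to the pressure update and collecting the ${\cal O}(1)$ and ${\cal O}(\ve)$ terms gives $\dt\gamma\xbar p_0\bnabla\cdot\bm u^{(0),n+1}=0$ and an analogous ${\cal O}(\ve)$ relation. The latter involves $\xbar p_1\bnabla\cdot\bm u^{(0),n+1}$, which vanishes, leaving $\bnabla\cdot\bm u^{(1),n+1}=0$. Here I must check that the ${\cal O}(1)$ and ${\cal O}(\ve)$ parts of the explicit terms cancel. They do, because $p^{n}$ and $p^*$ carry the same leading modes and the levels $n$ and $*$ are divergence-free.

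For consistency with \eref{3.4}, the leading-order velocity equation should be rearranged as in \eref{4.16}. It becomes the trapezoidal average of $(\bm u^{(0)}\cdot\bnabla)\bm u^{(0)}+\nabla p^{(2)}/\rho^{(0)}$ over levels $n$ and $*$, plus terms proportional to $\nabla p^{(2),n+1}-\nabla p^{(2),*}$ and $\nabla p^{(2),*}-\nabla p^{(2),n}$. These differences are ${\cal O}(\dt)$, so the extra terms are ${\cal O}((\dt)^2)$. I expect the main obstacle to be this careful bookkeeping of the mixed $\rho^n_{\max}$ versus $\rho^*_{\max}$ factors, so that the extra terms genuinely reduce to such high-order corrections.
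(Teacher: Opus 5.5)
Your proposal is correct and follows the paper's proof essentially step by step. The first stage is handled by Theorem \ref{thm31}. The pressure at $t^{n+1}$ comes from the same perturbed elliptic problem with Dirichlet data. The divergence constraints come from collecting the ${\cal O}(1)$ and ${\cal O}(\ve)$ terms of the pressure update, and consistency is read off from the rearranged leading-order velocity equation.

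The $\rho_{\max}$ bookkeeping you expected to be the main obstacle is in fact an exact identity. The velocity update in \eref{3.21} equals $\bm u^{n+1}=\bm u^n-\frac{\dt}{2}\big[(\bm u^n\cdot\bnabla)\bm u^n+(\bm u^*\cdot\bnabla)\bm u^*+\frac{\nabla p^n}{\ve^2\rho^n}+\frac{\nabla p^*}{\ve^2\rho^*}\big]-\frac{\dt}{\ve^2\rho^*_{\max}}\big(\nabla p^{n+1}-\nabla p^*\big)$. So no $\nabla p^{(2),*}-\nabla p^{(2),n}$ term survives. Since $p^*$ and $p^{n+1}$ both approximate the pressure at $t^{n+1}$, the remaining correction is ${\cal O}((\dt)^3)$ as in \eref{3.34}, rather than merely ${\cal O}((\dt)^2)$.
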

\begin{proof}
	The proof proceeds along the same lines and uses the same arguments as in the proof of Theorem \ref{thm31}.
	
	We begin by observing that the first stage of the second-order SI-DeC discretization coincides with the first-order AP SI time 
	discretization studied before. Therefore, according to Theorem \ref{thm31}, the intermediate solution $\rho^*$, $\bm u^*$, $p^*$, obtained 
	by \eref{3.20} admits an expansion of the type \eref{4.1}, that is,
	\begin{equation}
		\begin{aligned}
			\rho^*&=\rho^{(0),*}+\ve\rho^{(1),*}+\ve^2\rho^{(2),*}+\dots,\quad\bm u^*=\bm u^{(0),*}+\ve\bm u^{(1),*}+\ve^2\bm u^{(2),*}+\dots,\\
			p^*&=p^{(0),*}+\ve p^{(1),*}+\ve^2p^{(2),*}+\dots,
		\end{aligned}
		\label{3.27}
	\end{equation}
	with
	\begin{equation}
		p^{(0),*}\equiv\,\xbar p_0,\quad p^{(1),*}\equiv\,\xbar p_1,\quad\bnabla\!\cdot\!\bm u^{(0),*}=0,\quad\bnabla\!\cdot\!\bm u^{(1),*}=0.
		\label{3.28}
	\end{equation}
	We then substitute the expansions \eref{4.1} and \eref{3.27} into \eref{3.21} and use the conditions \eref{3.2f}--\eref{4.2} and \eref{3.28}
	to obtain
	\begin{align}
		&\hspace*{-0.1cm}\begin{aligned}
			\rho^{n+1}&=\rho^{(0),n}+\ve\rho^{(1),n}+\ve^2\rho^{(2),n}-\frac{\dt}{2}\Big[\bnabla\!\cdot\!\big(\rho^{(0),n}\bm u^{(0),n}\big)+
			\bnabla\!\cdot\!\big(\rho^{(0),*}\bm u^{(0),*}\big)\Big]\\
			&\hspace*{-0.3cm}-\frac{\ve\dt}{2}\Big[\bnabla\!\cdot\!\big(\rho^{(1),n}\bm u^{(0),n}\big)+
			\bnabla\!\cdot\!\big(\rho^{(0),n}\bm u^{(1),n}\big)+\bnabla\!\cdot\!\big(\rho^{(1),*}\bm u^{(0),*}\big)+
			\bnabla\!\cdot\!\big(\rho^{(0),*}\bm u^{(1),*}\big)\Big]\\
			&\hspace*{-0.3cm}-\frac{\ve^2\dt}{2}\Big[\bnabla\!\cdot\!\big(\rho^{(2),n}\bm u^{(0),n}\big)+
			\bnabla\!\cdot\!\big(\rho^{(1),n}\bm u^{(1),n}\big)+\bnabla\!\cdot\!\big(\rho^{(0),n}\bm u^{(2),n}\big)\\
			&\hspace*{0.85cm}+\bnabla\!\cdot\!\big(\rho^{(2),*}\bm u^{(0),*}\big)+\bnabla\!\cdot\!\big(\rho^{(1),*}\bm u^{(1),*}\big)+
			\bnabla\!\cdot\!\big(\rho^{(0),*}\bm u^{(2),*}\big)\Big]+{\cal O}(\ve^3),\end{aligned}\label{3.24}\\
		&\hspace*{-0.1cm}\begin{aligned}
			\bm u^{n+1}&=\bm u^{(0),n}+\ve\bm u^{(1),n}+\ve^2\bm u^{(2),n}\\
			&\hspace*{-0.3cm}-\frac{\dt}{2}\Big[(\bm u^{(0),n}\!\cdot\!\bnabla)\bm u^{(0),n}+(\bm u^{(0),*}\!\cdot\!\bnabla)\bm u^{(0),*}+
			\frac{\rho_{\max}^n-\rho^{(0),n}}{\rho^{(0),n}\rho_{\max}^n}\,\nabla p^{(2),n}+
			\frac{\rho_{\max}^*-\rho^{(0),*}}{\rho^{(0),*}\rho_{\max}^*}\,\nabla p^{(2),*}\Big]\\
			&\hspace*{-0.3cm}-\frac{\ve\dt}{2}\Big[(\bm u^{(1),n}\!\cdot\!\bnabla)\bm u^{(0),n}+(\bm u^{(0),n}\!\cdot\!\bnabla)\bm u^{(1),n}+
			(\bm u^{(1),*}\!\cdot\!\bnabla)\bm u^{(0),*}+(\bm u^{(0),*}\!\cdot\!\bnabla)\bm u^{(1),*}\\
			&\hspace*{0.95cm}-\frac{\rho^{(1),n}}{\rho^{(0),n}\rho_{\max}^n}\,\nabla p^{(2),n}-
			\frac{\rho^{(0),n}}{\rho^{(0),n}\rho_{\max}^n}\,\nabla p^{(3),n}-\frac{\rho^{(1),*}}{\rho^{(0),*}\rho_{\max}^n}\,\nabla p^{(2),*}-
			\frac{\rho^{(0),*}}{\rho^{(0),*}\rho_{\max}^*}\,\nabla p^{(3),*}\Big]\\
			&\hspace*{-0.3cm}-\frac{\ve^2\dt}{2}\Big[(\bm u^{(2),n}\!\cdot\!\bnabla)\bm u^{(0),n}+(\bm u^{(1),n}\!\cdot\!\bnabla)\bm u^{(1),n}+
			(\bm u^{(0),n}\!\cdot\!\bnabla)\bm u^{(2),n}\\
			&\hspace*{0.85cm}+(\bm u^{(2),*}\!\cdot\!\bnabla)\bm u^{(0),*}+(\bm u^{(1),*}\!\cdot\!\bnabla)\bm u^{(1),*}+
			(\bm u^{(0),*}\!\cdot\!\bnabla)\bm u^{(2),*}\\
			&\hspace*{0.95cm}-\frac{\rho^{(2),n}}{\rho^{(0),n}\rho_{\max}^n}\,\nabla p^{(2),n}-
			\frac{\rho^{(1),n}}{\rho^{(0),n}\rho_{\max}^n}\,\nabla p^{(3),n}-\frac{\rho^{(0),n}}{\rho^{(0),n}\rho_{\max}^n}\,\nabla p^{(4),n}\\
			&\hspace*{0.95cm}-\frac{\rho^{(2),*}}{\rho^{(0),*}\rho_{\max}^*}\,\nabla p^{(2),*}-
			\frac{\rho^{(1),*}}{\rho^{(0),*}\rho_{\max}^*}\,\nabla p^{(3),*}-\frac{\rho^{(0),*}}{\rho^{(0),*}\rho_{\max}^*}\,\nabla p^{(4),*}\Big]\\
			&\hspace*{-0.3cm}-\frac{\dt}{2}\bigg[\frac{\nabla p^{(2),n}}{\rho_{\max}^n}-\frac{\nabla p^{(2),*}}{\rho_{\max}^*}\bigg]
			-\frac{\ve\dt}{2}\bigg[\frac{\nabla p^{(3),n}}{\rho_{\max}^n}-\frac{\nabla p^{(3),*}}{\rho_{\max}^*}\bigg] 
			-\frac{\ve^2\dt}{2}\bigg[\frac{\nabla p^{(4),n}}{\rho_{\max}^n}-\frac{\nabla p^{(4),*}}{\rho_{\max}^*}\bigg]\\
			&\hspace*{-0.3cm}-\frac{\dt}{\ve^2\rho_{\max}^*}\,\nabla p^{n+1}+{\cal O}(\ve^3),\end{aligned}\label{3.25}\\
		&\hspace*{-0.1cm}\begin{aligned}
			p^{n+1}&=\,\xbar p_0+\ve\,\xbar p_1+\ve^2 p^{(2),n}-\frac{\ve^2\dt}{2}\Big[\bm u^{(0),n}\!\cdot\!\nabla p^{(2),n}+
			\bm u^{(0),*}\!\cdot\!\nabla p^{(2),*}\Big]\\
			&-\frac{\ve^2\dt\gamma}{2}\Big[p_{\min}^n\bnabla\!\cdot\!\bm u^{(2),n}-p_{\min}^*\bnabla\!\cdot\!\bm u^{(2),*}\Big]-
			\dt\gamma p_{\min}^*\bnabla\!\cdot\!\bm u^{n+1}+{\cal O}(\ve^3).\end{aligned}\label{3.26}
	\end{align}
	
	The explicit nature of the density update \eref{3.24} implies that $\rho^{n+1}$ admits the required expansion
	$\rho^{n+1}=\rho^{(0),n+1}+\ve\rho^{(1),n+1}+\ve^2\rho^{(2),n+1}+\dots$ with $\rho^{(0),n+1}$ satisfying
	\begin{equation}
		\rho^{(0),n+1}=\rho^{(0),n}-\frac{\dt}{2}\Big[\bnabla\!\cdot\!\big(\rho^{(0),n}\bm u^{(0),n}\big)+
		\bnabla\!\cdot\!\big(\rho^{(0),*}\bm u^{(0),*}\big)\Big],
		\label{3.32ff}
	\end{equation}
	and other coefficients satisfying the equations, which can be easily obtained by grouping the corresponding powers of $\ve$.
	
	As in the proof of Theorem \ref{thm31}, we show that $p^{n+1}$ admits the asymptotic expansion by proving that it satisfies a well-posed
	elliptic problem with suitable boundary conditions. Taking the divergence of the velocity equation \eref{3.25} and substituting
	$\bnabla\!\cdot\!\muu^{n+1}$ into the pressure equation \eref{3.26} yields
	$$
	-\Delta p^{n+1}+\frac{\ve^2\rho_{\max}^*}{(\dt)^2\gamma p_{\min}^*}\,p^{n+1}={\cal O}(\ve^2).
	$$
	This together with the boundary conditions \eref{3.2}, results in the same expansion for $p^{n+1}$, which we have established in
	\eref{3.20f}--\eref{4.14} for the first-order SI method, leading to
	\begin{equation}
		\nabla p^{n+1}=\ve^2\left[\nabla p^{(2),n+1}+\ve\nabla p^{(3),n+1}+\ve^2\nabla p^{(4),n+1}+{\cal O}(\ve^3)\right].
		\label{3.32f}
	\end{equation}
	Next, we substitute \eref{3.32f} into the the velocity equation \eref{3.25} and a straightforward grouping of the power-like terms of $\ve$
	gives the equations for the coefficients of the velocity expansion
	$\bm u^{n+1}=\bm u^{(0),n+1}+\ve\bm u^{(1),n+1}+\ve^2\bm u^{(2),n+1}+\dots$. The equation for $\bm u^{(0),n+1}$ is
	\begin{equation}
		\begin{aligned}
			\bm u^{(0),n+1}&=\bm u^{(0),n}-\frac{\dt}{2}\bigg[(\bm u^{(0),n}\!\cdot\!\bnabla)\bm u^{(0),n}+(\bm u^{(0),*}\!\cdot\!\bnabla)\bm u^{(0),*}+
			\frac{\nabla p^{(2),n}}{\rho^{(0),n}}+\frac{\nabla p^{(2),*}}{\rho^{(0),*}}\bigg]\\
			&-\frac{\dt}{\rho_{\max}^*}\left(\nabla p^{(2),n+1}-\nabla p^{(2),*}\right),
		\end{aligned}
		\label{3.34}
	\end{equation}
	and the other equations can be obtained similarly.
	
	Let us now show the consistency with the asymptotic limit. The required conditions \eref{4.14} on the pressure modes have been already
	shown. The divergence-free conditions for the velocity modes are then established from the pressure update \eref{3.26}, which becomes
	$$
	\begin{aligned}
		\ve^2p^{(2),n+1}&=\ve^2 p^{(2),n}-\frac{\ve^2\dt}{2}\Big[\bm u^{(0),n}\!\cdot\!\nabla p^{(2),n}+
		\bm u^{(0),*}\!\cdot\!\nabla p^{(2),*}\Big]-
		\frac{\ve^2\dt\gamma}{2}\Big[p_{\min}^n\bnabla\!\cdot\!\bm u^{(2),n}-p_{\min}^*\bnabla\!\cdot\!\bm u^{(2),*}\Big]\\
		&-\dt\gamma p_{\min}^*\bnabla\!\cdot\!\bm u^{(0),n+1}-\ve\dt\gamma p_{\min}^*\bnabla\!\cdot\!\bm u^{(1),n+1}-
		\ve^2\dt\gamma p_{\min}^*\bnabla\!\cdot\!\bm u^{(2),n+1}+{\cal O}(\ve^3).
	\end{aligned}
	$$
	It is clear that the ${\cal O}(1)$ and ${\cal O}(\ve)$ terms here vanish, that is,
	$\bnabla\!\cdot\!\bm u^{(0),n+1}=\bnabla\!\cdot\!\bm u^{(1),n+1}=0$.
	
	Finally, we notice that \eref{3.32ff} and \eref{3.34} are consistent discretizations of \eref{3.3} and \eref{3.4} with the last term in
	\eref{3.34} representing a temporal diffusion, which is consistent with the order of accuracy of the scheme and thus proportional to
	${\cal O}((\dt)^3)$.
\end{proof}
\begin{remark}
	The described AP SI-DeC time discretization can be extended to arbitrarily high order in a straightforward way within the DeC framework. 
	For the sake of brevity, we restrict our consideration to the second order of accuracy, which matches the accuracy that will be used in the
	spatial discretization discussed in \S\ref{sec32}.
\end{remark}

\subsection{Fully Discrete Second-Order AP Scheme for the Primitive System}\label{sec32}
In this section, we construct a fully discrete scheme based on the second-order AP SI time discretization presented in \S\ref{sec313}. To
this end, we first introduce uniform Cartesian cells $I_{j,k}:=[x_\jmh,x_\jph]\times[y_\kmh,y_\kph]$ with $x_\jph-x_\jmh\equiv\dx$ and
$y_\kph-y_\kmh\equiv\dy$, centered at $(x_j,y_k)$ with $x_j=\big(x_\jmh+x_\jph\big)/2$ and $y_k=\big(y_\kmh+y_\kph\big)/2$, and assume that
the cell averages $\,\xbar{\bm V}_{j,k}^{\,n}:\approx\frac{1}{\dx\dy}\iint_{I_{j,k}}\bm V(x,y,t^n)\,{\rm d}x{\rm d}y\,$ are available at
time $t^n$.

The fully discrete FV version of the second-order AP scheme \eref{3.19} reads as
\begin{align}
	&\xbar{\bm V}^{\,*}_{j.k}=\,\xbar{\bm V}^{\,n}_{j.k}-\dt\bm{{\cal R}}^n_{j.k}-\dt\bm{{\cal L}}^{n,*}_{j.k},\label{3.31a}\\
	&\xbar{\bm V}^{\,n+1}_{j.k}=\,\xbar{\bm V}^{\,n}_{j.k}-\frac{\dt}{2}\big[\bm{{\cal R}}^n_{j.k}+\bm{{\cal R}}^*_{j.k}\big]-
	\frac{\dt}{2}\big[\bm{{\cal L}}^{n,n}_{j.k}-\bm{{\cal L}}^{*,*}_{j.k}\big]-\dt\bm{{\cal L}}^{*,n+1}_{j.k},\label{3.31b}
\end{align}
where $\bm{{\cal R}}^n_{j.k}$ and $\bm{{\cal R}}^*_{j.k}$ are obtained using the PCCU discretization from \cite{ACKM,CKN22}, which is a
low-dissipation generalization of the PCCU discretization from \cite{diaz2019path}, while $\bm{{\cal L}}^{n,*}_{j.k}$,
$\bm{{\cal L}}^{n,n}_{j.k}$, $\bm{{\cal L}}^{*,*}_{j.k}$, and $\bm{{\cal L}}^{*,n+1}_{j.k}$ are obtained using central differences. In what
follows, for the sake of brevity, we provide details on $\bm{{\cal R}}^n_{j.k}$ and $\bm{{\cal L}}^{n,*}_{j.k}$ only, whereas the remaining
discretizations are obtained in a similar manner.

We begin with
\begin{equation}
	\begin{aligned}
		\bm{{\cal R}}_{j,k}^n:=&\,\frac{1}{\dx}\Bigg[\widetilde{\bm{{\cal F}}}_{\jph,k}^n-\widetilde{\bm{{\cal F}}}_{\jmh,k}^n-
		\widetilde{\bm B}_{j,k}^n-\frac{\tilde a^{+,n}_{\jmh,k}\widetilde{\bm B}_{\bm\Psi,\jmh,k}^n}
		{\tilde a^{+,n}_{\jmh,k}-\tilde a^{-,n}_{\jmh,k}}+
		\frac{\tilde a^{-,n}_{\jph,k}\widetilde{\bm B}_{\bm\Psi,\jph,k}^n}{\tilde a^{+,n}_{\jph,k}-\tilde a^{-,n}_{\jph,k}}\Bigg]\\
		+&\,\frac{1}{\dy}\Bigg[\widetilde{\bm{{\cal G}}}_{j,\kph}^n-\widetilde{\bm{{\cal G}}}_{j,\kmh}^n-\widetilde{\bm C}_{j,k}^n
		-\frac{\tilde b^{+,n}_{j,\kmh}\widetilde{\bm C}_{\bm\Psi,j,\kmh}^n}{\tilde b^{+,n}_{j,\kmh}-\tilde b^{-,n}_{j,\kmh}}+
		\frac{\tilde b^{-,n}_{j,\kph}\widetilde{\bm C}_{\bm\Psi,j,\kph}^n}{\tilde b^{+,n}_{j,\kph}-\tilde b^{-,n}_{j,\kph}}\Bigg],
	\end{aligned}
	\label{3.32}
\end{equation}
where $\widetilde{\bm{{\cal F}}}_{\jph,k}^n$ and $\widetilde{\bm{{\cal G}}}_{j,\kph}^n$ are the CU numerical fluxes
$$
\begin{aligned}
	&\widetilde{\bm{{\cal F}}}_{\jph,k}^n:=\frac{\tilde a^{+,n}_{\jph,k}\widetilde{\bm F}\big(\bm V^{-,n}_{\jph,k}\big)-
		\tilde a^{-,n}_{\jph,k}\widetilde{\bm F}\big(\bm V^{+,n}_{\jph,k}\big)}{\tilde a^{+,n}_{\jph,k}-\tilde a^{+,n}_{\jph,k}}+
	\frac{\tilde a^{+,n}_{\jph,k}\tilde a^{-,n}_{\jph,k}}{\tilde a^{+,n}_{\jph,k}-\tilde a^{-,n}_{\jph,k}\,}
	\Big(\bm V^{+,n}_{\jph,k}-\bm V^{-,n}_{\jph,k}-\delta\bm V_{\jph,k}^n\Big),\\
	&\widetilde{\bm{{\cal G}}}_{j,\kph}^n:=\frac{\tilde b^{+,n}_{j,\kph}\widetilde{\bm G}\big(\bm V^{-,n}_{j,\kph}\big)-
		\tilde b^{-,n}_{j,\kph}\widetilde{\bm G}\big(\bm V^{+,n}_{j,\kph}\big)}{\tilde b^{+,n}_{j,\kph}-\tilde b^{-,n}_{j,\kph}}+
	\frac{\tilde b^{+,n}_{j,\kph}\tilde b^{-,n}_{j,\kph}}{\tilde b^{+,n}_{j,\kph}-\tilde b^{-,n}_{j,\kph}}\,
	\Big(\bm V^{+,n}_{j,\kph}-\bm V^{-,n}_{j,\kph}-\delta\bm V_{j,\kph}^n\Big),
\end{aligned}
$$
and $\bm V^{\pm,n}_{\jph,k}$ and $\bm V^{\pm,n}_{j,\kph}$ are reconstructed values of $\bm V$ at the midpoints of the cell interfaces,
$\delta\bm V_{\jph,k}^n$ and $\delta\bm V_{j,\kph}^n$ are ``built-in'' anti-diffusion terms, and $\tilde a^{\pm,n}_{\jph,k}$ and
$\tilde a^{\pm,n}_{j,\kph}$ denote the one-sided local propagation speeds of the nonstiff subsystem in the $x$- and $y$-direction,
respectively.

The point values
\begin{equation}
	\begin{aligned}
		&\bm V^{-,n}_{\jph,k}:=\,\xbar{\bm V}_{j,k}^{\,n}+\frac{\dx}{2}(\bm V_x)_{j,k}^n,&&
		\bm V^{+,n}_{\jph,k}:=\,\xbar{\bm V}_{j+1,k}^{\,n}-\frac{\dx}{2}(\bm V_x)_{j+1,k}^n,\\
		&\bm V^{-,n}_{j,\kph}:=\,\xbar{\bm V}_{j,k}^{\,n}+\frac{\dy}{2}(\bm V_y)_{j,k}^n,&&
		\bm V^{+,n}_{j,\kph}:=\,\xbar{\bm V}_{j,k+1}^{\,n}-\frac{\dy}{2}(\bm V_y)_{j,k+1}^n,
	\end{aligned}
	\label{3.33f}
\end{equation}
are computed using the piecewise linear reconstruction
$$
\xbar{\bm V}_{j,k}^{\,n}+(\bm V_x)_{j,k}^n(x-x_j)+(\bm V_y)_{j,k}^n(y-y_k),~~(x,y)\in I_{j,k},
$$
in which the slopes $(\bm V_x)_{j,k}^n$ and $(\bm V_y)_{j,k}^n$ are approximated using the generalized minmod limiter (see, e.g.,
\cite{LieNoe,nessyahu1990non,sweby1984high}):
\begin{equation}
	\begin{aligned}
		(\bV_x)_{j,k}^n&:={\rm minmod}\left(\theta\,\frac{\xbar{\bV}_{j,k}^{\,n}-\,\xbar{\bV}_{j-1,k}^{\,n}}{\dx},\,
		\frac{\xbar{\bV}_{j+1,k}^{\,n}-\,\xbar{\bV}_{j-1,k}^{\,n}}{2\dx},\,
		\theta\,\frac{\xbar{\bV}_{j+1,k}^{\,n}-\,\xbar{\bV}_{j,k}^{\,n}}{\dx}\right),\\
		(\bV_y)_{j,k}^n&:={\rm minmod}\left(\theta\,\frac{\xbar{\bV}_{j,k}^{\,n}-\,\xbar{\bV}_{j,k-1}^{\,n}}{\dy},\,
		\frac{\xbar{\bV}_{j,k+1}^{\,n}-\,\xbar{\bV}_{j,k-1}^{\,n}}{2\dy},\,
		\theta\,\frac{\xbar{\bV}_{j,k+1}^{\,n}-\,\xbar{\bV}_{j,k}^{\,n}}{\dy}\right),
	\end{aligned}
	\label{3.33}
\end{equation}
where the minmod function, defined by
\begin{equation*}
	{\rm minmod}(z_1,z_2,\ldots):=\left\{\begin{aligned}
		&\min(z_1,z_2,\ldots)&&\mbox{if}~z_i>0,~\forall i,\\
		&\max(z_1,z_2,\ldots)&&\mbox{if}~z_i<0,~\forall i,\\
		&\,0&&\mbox{otherwise},
	\end{aligned}\right.
\end{equation*}
is applied in a componentwise manner. The parameter $\theta\in[1,2]$ in \eref{3.33} is to be chosen to adjust the amount of numerical
dissipation present in the resulting scheme, with larger values of $\theta$ leading to sharper but, in general, more oscillatory solutions.

The one-sided local speeds of propagation are estimated using the smallest and largest eigenvalues of the matrices
$\frac{\partial\widetilde{\mF}}{\partial\bm V}(\bm V)-\widetilde B(\bm V)$ and
$\frac{\partial\widetilde{\mG}}{\partial\bm V}(\bm V)-\widetilde C(\bm V)$ as follows:
\begin{equation}
	\begin{aligned}
		&\tilde a^{-,n}_{\jph,k}:=\min\left\{u^{-,n}_{\jph,k}-\tilde c^{\,-,n}_{\jph,k},\,u^{+,n}_{\jph,k}-\tilde c^{\,+,n}_{\jph,k},\,-\delta
		\right\},\\
		&\tilde a^{+,n}_{\jph,k}:=\max\left\{u^{-,n}_{\jph,k}+\tilde c^{\,-,n}_{\jph,k},\,u^{+,n}_{\jph,k}+\tilde c^{\,+,n}_{\jph,k},\,
		\delta\right\},\\
		&\tilde b^{-,n}_{j,\kph}:=\min\left\{v^{-,n}_{j,\kph}-\tilde c^{\,-,n}_{j,\kph},\,v^{+,n}_{j,\kph}-\tilde c^{\,+,n}_{j,\kph},\,
		-\delta\right\},\\
		&\tilde b^{+,n}_{j,\kph}:=\max\left\{v^{-,n}_{j,\kph}+\tilde c^{\,-,n}_{j,\kph},\,v^{+,n}_{j,\kph}+\tilde c^{\,+,n}_{j,\kph},\,
		\delta\right\},
	\end{aligned}
	\label{4.49}
\end{equation}
where the sound speeds
$$
\tilde c^{\,\pm,n}_{\jph,k}:=\frac{1}{\ve}\sqrt{\gamma\frac{\big(\rho_{\max}^n-\rho^{\,\pm,n}_{\jph,k}\big)
		\big(p^{\,\pm,n}_{\jph,k}-p_{\min}^n)}{\rho^{\,\pm,n}_{\jph,k}\,\rho_{\max}^n}},\quad
\tilde c^{\,\pm,n}_{j,\kph}:=\frac{1}{\ve}\sqrt{\gamma\frac{\big(\rho_{\max}^n-\rho^{\,\pm,n}_{j,\kph}\big)
		\big(p^{\,\pm,n}_{j,\kph}-p_{\min}^n)}{\rho^{\,\pm,n}_{j,\kph}\,\rho_{\max}^n}}
$$
are computed using the following discrete versions of $\rho_{\max}^n$ and $p_{\min}^n$
\begin{equation}
	\rho_{\max}^n:=\max\limits_{j,k}\,\xbar{\rho}_{j,k}^{\,n},\quad p_{\min}^n:=\min\limits_{j,k}\,\xbar p_{j,k}^{\,n},
	\label{3.35}
\end{equation}
and $\delta$ is a small positive parameter introduced to prevent divisions by $0$ (we have taken $\delta:=10^{-15}$ in the numerical
experiments reported in \S\ref{sec6}).

The ``built-in'' anti-diffusion terms are
$$
\begin{aligned}
	&\delta\bm V_{\jph,k}^n:={\rm minmod}\Big(\bm V_{\jph,k}^{{\rm int},n}-\bm V^{-,n}_{\jph,k},\,
	\bm V^{+,n}_{\jph,k}-\bm V_{\jph,k}^{{\rm int},n}\Big),\\
	&\delta\bm V_{j,\kph}^n:={\rm minmod}\Big(\bm V_{j,\kph}^{{\rm int},n}-\bm V^{-,n}_{j,\kph},\,
	\bm V^{+,n}_{j,\kph}-\bm V_{j,\kph}^{{\rm int},n}\Big),
\end{aligned}
$$
where 
$$
\begin{aligned}
	&\bm V_{\jph,k}^{{\rm int},n}:=\frac{\tilde a^{+,n}_{\jph,k}\bm V^{+,n}_{\jph,k}-\tilde a^{-,n}_{\jph,k}\bm V^{-,n}_{\jph,k}
		-\widetilde{\bm F}\big(\bm V^{+,n}_{\jph,k}\big)+\widetilde{\bm F}\big(\bm V^{-,n}_{\jph,k}\big)}
	{\tilde a^{+,n}_{\jph,k}-\tilde a^{-,n}_{\jph,k}},\\[0.3ex]
	&\bm V_{j,\kph}^{{\rm int},n}:=\frac{\tilde b^{+,n}_{j,\kph}\bm V^{+,n}_{j,\kph}-\tilde b^{-,n}_{j,\kph}\bm V^{-,n}_{j,\kph}
		-\widetilde{\bm G}\big(\bm V^{+,n}_{j,\kph}\big)+\widetilde{\bm G}\big(\bm V^{-,n}_{j,\kph}\big)}
	{\tilde b^{+,n}_{j,\kph}-\tilde b^{-,n}_{j,\kph}}.
\end{aligned}
$$

Finally,
\begin{equation}
	\bm{{\cal L}}^{n,*}_{j.k}:=\Big(0,\,\frac{\,\xbar p_{j+1,k}^{\,*}-\,\xbar p_{j-1,k}^{\,*}}{2\dx\,\ve^2\rho_{\max}^n},\,
	\frac{\,\xbar p_{j,k+1}^{\,*}-\,\xbar p_{j,k-1}^{\,*}}{2\dy\,\ve^2\rho_{\max}^n},\,
	\gamma p_{\min}^n\bnabla\!\cdot\xbar{\bm u}_{j,k}^{\,*}\Big)^\top,
	\label{4.48b}
\end{equation}
where $\bnabla\!\cdot\xbar{\bm u}_{j,k}$ denotes the discrete divergence operator computed using second-order central differences:
\begin{equation}
	\bnabla\!\cdot\xbar{\bm u}_{j,k}:=\frac{\,\xbar u_{j+1,k}-\,\xbar u_{j-1,k}}{2\dx}+\frac{\,\xbar v_{j,k+1}-\,\xbar v_{j,k-1}}{2\dy}.
	\label{3.38f}
\end{equation}

\subsection{Semi-Discrete CU Scheme for the Conservative System}\label{sec42}
We now consider the conservative formulation \eref{2.1a}--\eref{2.1c}, which can be put into the following vector form:
\begin{equation}
	\begin{aligned}
		&\bm U_t+\mF(\bm U)_x+\mG(\bm U)_y=\bm0,\quad\bm U:=(\rho,\rho u,\rho v,E)^\top,\\
		&\mF(\bm U):=\Big(\rho u,\rho u^2+\frac{p}{\ve^2},\rho uv,u(E+p)\Big)^\top,\quad
		\mG(\bm U):=\Big(\rho v,\rho uv,\rho v^2+\frac{p}{\ve^2},v(E+p)\Big)^\top.
	\end{aligned}
	\label{3.39g}
\end{equation}

In the semi-discrete CU scheme, the cell averages
$\,\xbar{\bm U}_{j,k}(t):\approx\frac{1}{\dx\dy}\iint_{I_{j,k}}\bm U(x,y,t)\,{\rm d}x{\rm d}y$ are evolved in time by numerically solving
the following system of ODEs:
\begin{equation}
	\frac{{\rm d}}{{\rm d}t}\,\xbar{\bm U}_{j,k}=-\frac{\bm{{\cal F}}_{\jph,k}-\bm{{\cal F}}_{\jmh,k}}{\dx}-
	\frac{\bm{{\cal G}}_{j,\kph}-\bm{{\cal G}}_{j,\kmh}}{\dy},
	\label{2.15a}
\end{equation}
where $\bm{{\cal F}}_{\jph,k}$ and $\bm{{\cal G}}_{j,\kph}$ are the CU numerical fluxes from \cite{KLin} defined as
\begin{equation}
	\begin{aligned}
		\bm{{\cal F}}_{\jph,k}&:=\frac{a^+_{\jph,k}\bm F\big(\bm U^-_{\jph,k}\big)-a^-_{\jph,k}\bm F\big(\bm U^+_{\jph,k}\big)}
		{a^+_{\jph,k}-a^-_{\jph,k}}\\
		&+\frac{a^+_{\jph,k}a^-_{\jph,k}}{a^+_{\jph,k}-a^-_{\jph,k}}\,\Big(\bm U^+_{\jph,k}-\bm U^-_{\jph,k}-\delta\bm U_{\jph,k}\Big),\\[0.8ex]
		\bm{{\cal G}}_{j,\kph}&:=\frac{b^+_{j,\kph}\bm G\big(\bm U^-_{j,\kph}\big)-b^-_{j,\kph}\bm G\big(\bm U^+_{j,\kph}\big)}
		{b^+_{j,\kph}-b^-_{j,\kph}}\\
		&+\frac{b^+_{j,\kph}b^-_{j,\kph}}{b^+_{j,\kph}-b^-_{j,\kph}}\,\Big(\bm U^+_{j,\kph}-\bm U^-_{j,\kph}-\delta\bm U_{j,\kph}\Big).
	\end{aligned}
	\label{3.41}
\end{equation}
Here, the interface values $\bm U_{\jph,k}^\pm:=\bm U\big(\bm V_{\jph,k}^\pm\big)$ and
$\bm U_{j,\kph}^\pm:=\bm U\big(\bm V_{j,\kph}^\pm\big)$ are computed from the reconstructed primitive variables $\bm V_{\jph,k}^\pm$ and 
$\bm V_{j,\kph}^\pm$ (see \S\ref{sec32}) at the corresponding time level via a straightforward transformation $\bU(\bV)$ from $\bm V$ to
$\bm U$. The quantities $a_{\jph,k}^\pm$ and $b_{j,\kph}^\pm$ are the one-sided local speeds of propagation for the conservative system
\eref{3.39g} in the $x$- and $y$-direction, respectively. They are estimated using the largest and smallest eigenvalues of the corresponding
flux Jacobians as follows:
\begin{equation}
	\begin{aligned}
		\begin{aligned}
			&a^-_{\jph,k}:=\min\left\{u^-_{\jph,k}-c^-_{\jph,k},\,u^+_{\jph,k}-c^+_{\jph,k},\,-\delta\right\},\\
			&a^+_{\jph,k}:=\max\left\{u^-_{\jph,k}+c^-_{\jph,k},\,u^+_{\jph,k}+c^+_{\jph,k},\,\delta\right\},
		\end{aligned}\quad c^\pm_{\jph,k}:=\frac{1}{\ve}\sqrt{\frac{\gamma p^\pm_{\jph,k}}{\rho^\pm_{\jph,k}}},\\[1.5ex]
		\begin{aligned}
			&b^-_{j,\kph}:=\min\left\{v^-_{j,\kph}-c^-_{j,\kph},\,v^+_{j,\kph}-c^+_{j,\kph},\,-\delta\right\},\\
			&b^+_{j,\kph}:=\max\left\{v^-_{j,\kph}+c^-_{j,\kph},\,v^+_{j,\kph}+c^+_{j,\kph},\,\delta\right\},
		\end{aligned}\quad c^\pm_{j,\kph}:=\frac{1}{\ve}\sqrt{\frac{\gamma p^\pm_{j,\kph}}{\rho^\pm_{j,\kph}}},
	\end{aligned}
	\label{3.42}
\end{equation}
where $\delta:=10^{-15}$ is used to avoid divisions by $0$.

The ``built-in'' anti-diffusion terms are
\begin{equation}
	\begin{aligned}
		&\delta\bm U_{\jph,k}:={\rm minmod}\Big(\bm U_{\jph,k}^{\rm int}-\bm U^-_{\jph,k},\bm U^+_{\jph,k}-\bm U_{\jph,k}^{\rm int}\Big),\\[0.5ex]
		&\delta\bm U_{j,\kph}:={\rm minmod}\Big(\bm U_{j,\kph}^{\rm int}-\bm U^-_{j,\kph},\bm U^+_{j,\kph}-\bm U_{j,\kph}^{\rm int}\Big),
	\end{aligned}
	\label{3.43}
\end{equation}
with 
\begin{equation}
	\begin{aligned}
		&\bm U_{\jph,k}^{\rm int}:=\frac{a^+_{\jph,k}\bm U^+_{\jph,k}-a^-_{\jph,k}\bm U^-_{\jph,k}
			-\bm F(\bm U^+_{\jph,k})+\bm F(\bm U^-_{\jph,k})}{a^+_{\jph,k}-a^-_{\jph,k}},\\[0.5ex]
		&\bm U_{j,\kph}^{\rm int}:=\frac{b^+_{j,\kph}\bm U^+_{j,\kph}-b^-_{j,\kph}\bm U^-_{j,\kph}
			-\bm G\big(\bm U^+_{j,\kph}\big)+\bm G\big(\bm U^-_{j,\kph}\big)}{b^+_{j,\kph}-b^-_{j,\kph}}.
	\end{aligned}
	\label{3.44}
\end{equation}

Note that most of the indexed quantities in the semi-discrete setting above are time-dependent, but we have omitted this dependence to ease
the notation.

Finally, the system of ODEs \eref{2.15a} has to be integrated in time using an appropriate ODE solver. Its solution is performed
simultaneously with one of the primitive systems using the explicit counterpart of the SI-DeC scheme, and a post-processing is performed
at each stage, as explained in \S\ref{sec422}.

\subsection{Implementation Details}\label{sec422}
In our DF-FV approach, the solutions of the primitive and conservative systems are evolved simultaneously according to the following
algorithm.

\smallskip
\noindent
$\bullet\,$ \textbf{Step 1 (Compute $\xbar\rho^{\,*}_{j,k}$).} We use the $\rho$-equation in \eref{3.31a} to obtain
\begin{equation*}
	\xbar\rho^{\,*}_{j,k}=\,\xbar\rho^{\,n}_{j,k}-\dt({\cal R}^\rho)^n_{j,k}.
\end{equation*}

\smallskip
\noindent
$\bullet\,$ \textbf{Step 2 (Solve the linear elliptic equation for $\,\xbar p^{\,*}_{j,k}$).} We apply the discrete divergence operator
\eref{3.38f} to the $\bm u$-equations  in \eref{3.31a} and substitute them into the $p$-equation in \eref{3.31a} to obtain the following
linear system of algebraic equations for $\,\xbar p^{\,*}_{j,k}$, which is a discretization of the linear elliptic equation for $p^*$:
\begin{equation*}
	\xbar p^{\,*}_{j,k}-\frac{(\dt)^2\gamma p_{\min}^n}{\ve^2\rho_{\max}^n}\,\Delta\xbar p_{j,k}^{\,*}=\,\xbar p^{\,n}_{j,k}-
	\dt({\cal R}^p)^n_{j,k}-\dt\gamma p_{\min}^n\bnabla\!\cdot\xbar{\bm u}_{j,k}^{\,n}
	+(\dt)^2\gamma p_{\min}^n\bnabla\!\cdot\!(\bm{{\cal R}}^{\bm u})^n_{j,k},
	\label{3.39f}
\end{equation*}
where the discrete Laplacian $\Delta\xbar p_{j,k}$ is defined as
$$
\Delta\xbar p_{j,k}:=\frac{\,\xbar p_{j-1,k}-2\,\xbar p_{j,k}+\,\xbar p_{j+1,k}}{(\dx)^2}+
\frac{\,\xbar p_{j,k-1}-2\,\xbar p_{j,k}+\,\xbar p_{j,k+1}}{(\dy)^2}.
$$

\smallskip
\noindent
$\bullet\,$ \textbf{Step 3 (Compute $\,\xbar{\bm u}^{\,*}_{j,k}$).} Once $\,\xbar p^{\,*}_{j,k}$ is available, we use the $\bm u$-equations
in \eref{3.31a} to obtain
\begin{equation*}
	\xbar{\bm u}^{\,*}_{j,k}=\,\xbar{\bm u}^{\,n}_{j,k}-\dt(\bm{{\cal R}}^{\bm u})^n_{j,k}-\dt(\bm{{\cal L}}^{\bm u})^{n,*}_{j,k}.
\end{equation*}

\smallskip
\noindent
$\bullet\,$ \textbf{Step 4 (Compute $\,\xbar{\bm U}^{\,*}_{j,k}$).} 
We perform the conservative update with the explicit counterpart of the SI-DeC scheme to obtain the solution $\,\xbar{\bm U}^{\,*}_{j,k}$ 
at the intermediate stage
\begin{equation}
	\xbar{\bm U}^{\,*}_{j,k}=\,\xbar{\bm U}^{\,n}_{j,k}-\dt\left[\frac{\bm{{\cal F}}_{\jph,k}^n-\bm{{\cal F}}_{\jmh,k}^n}{\dx}+
	\frac{\bm{{\cal G}}_{j,\kph}^n-\bm{{\cal G}}_{j,\kmh}^n}{\dy}\right],
	\label{3.50}
\end{equation}
and then post-process the primitive solution by replacing $\xbar{\bm V}^{\,*}_{j,k}$ with
$r\big(\bm V\big(\,\xbar{\bm U}^{\,*}_{j,k}\big),\,\xbar{\bm V}^{\,*}_{j,k}\big)$; see \S\ref{sec43}.

\smallskip
\noindent
$\bullet\,$ \textbf{Step 5 (Compute $\xbar\rho^{\,n+1}_{j,k}$).} We solve the $\rho$-equation in \eref{3.31b} to obtain
\begin{equation*}
	\xbar\rho^{\,n+1}_{j,k}=\,\xbar\rho^{\,n}_{j,k}-\frac{\dt}{2}\big[({\cal R}^\rho)^n_{j,k}+({\cal R}^\rho)^*_{j,k}\big].
\end{equation*}

\smallskip
\noindent
$\bullet\,$ \textbf{Step 6 (Solve the linear elliptic equation for $\,\xbar p^{\,n+1}_{j,k}$).} We apply the discrete divergence operator
\eref{3.38f} to the $\bm u$-equations in \eref{3.31b} and substitute them into the $p$-equation in \eref{3.31b} to obtain the following
linear system of algebraic equations for $\,\xbar p^{\,n+1}_{j,k}$, which is a discretization of the linear elliptic equation for $p^{n+1}$:
\begin{equation*}
	\begin{aligned}
		\xbar p^{\,n+1}_{j,k}&-\frac{(\dt)^2\gamma p_{\min}^*}{\ve^2\rho_{\max}^*}\,\Delta\xbar p_{j,k}^{\,n+1}=\,\xbar p^{\,n}_{j,k}-
		\frac{\dt}{2}\left[({\cal R}^p)_{j,k}^n+({\cal R}^p)_{j,k}^*\right]-
		\frac{\dt}{2}\left[({\cal L}^p)^{n,n}_{j,k}-({\cal L}^p)^{*,*}_{j,k}\right]\\
		&-\dt\gamma p_{\min}^*\bnabla\!\cdot\xbar{\bm u}_{j,k}^{\,n}
		+\frac{(\dt)^2\gamma p_{\min}^*}{2}\bnabla\!\cdot\!\left[(\bm{{\cal R}}^{\bm u})^n_{j,k}+(\bm{{\cal R}}^{\bm u})^*_{j,k}\right]
		-\frac{(\dt)^2\gamma p_{\min}^*}{2}\bnabla\!\cdot\!\left[(\bm{{\cal L}}^{\uvec{u}})^{n,n}_{j,k}-(\bm{{\cal L}}^{\uvec{u}})^{*,*}_{j,k}\right].
	\end{aligned}
	\label{3.40f}
\end{equation*}

\smallskip
\noindent
$\bullet\,$ \textbf{Step 7 (Compute $\,\xbar{\bm u}^{\,n+1}_{j,k}$).} Once $\,\xbar p^{\,n+1}_{j,k}$ is available, we compute
\begin{equation*}
	\begin{aligned}
		\xbar{\bm u}^{\,n+1}_{j,k}=
		\,\xbar{\bm u}^{\,n}_{j,k}-\frac{\dt}{2}\left[(\bm{{\cal R}}^{\bm u})^n_{j,k}+(\bm{{\cal R}}^{\bm u})^*_{j,k}\right]
		-\frac{\dt}{2}\left[(\bm{{\cal L}}^{\uvec{u}})^{n,n}_{j,k}
		-(\bm{{\cal L}}^{\uvec{u}})^{*,*}_{j,k}\right]-\dt(\bm{{\cal L}}^{\bm u})^{*,n+1}_{j,k}.
	\end{aligned}
\end{equation*}

\smallskip
\noindent
$\bullet\,$ \textbf{Step 8 (Compute $\,\xbar{\bm U}^{\,n+1}_{j,k}$).} Finally, we use the explicit part of the SI-DeC scheme to evaluate  
\begin{equation}
	\begin{aligned}
		\xbar{\bm U}^{\,n+1}_{j,k}=\,\xbar{\bm U}^{\,n}_{j,k}&-\frac{\dt}{2}\left[\frac{\bm{{\cal F}}_{\jph,k}^n-\bm{{\cal F}}_{\jmh,k}^n}{\dx}+
		\frac{\bm{{\cal F}}_{\jph,k}^{*}-\bm{{\cal F}}_{\jmh,k}^{*}}{\dx}\right]\\
		&-\frac{\dt}{2}\left[\frac{\bm{{\cal G}}_{j,\kph}^n-\bm{{\cal G}}_{j,\kmh}^n}{\dy}+
		\frac{\bm{{\cal G}}_{j,\kph}^{*}-\bm{{\cal G}}_{j,\kmh}^{*}}{\dy}\right].
	\end{aligned}
	\label{3.51}
\end{equation}
and then post-process of the primitive solution by replacing $\xbar{\bm V}^{\,n+1}_{j,k}$ with
$r\big(\bm V\big(\,\xbar{\bm U}^{\,n+1}_{j,k}\big),\,\xbar{\bm V}^{\,n+1}_{j,k}\big)$; see \S\ref{sec43}.

We recall that the interface values of $\bm U$ needed for the computation of the numerical fluxes in \eref{3.50} and \eref{3.51} are
obtained from the reconstructed primitive variables $\bm V$ at the corresponding time levels. Note that the conservative updates are, as a
matter of fact, explicit, since they are performed using the explicit part of the SI-DeC scheme.

{\color{black}
	\begin{remark}
		It should be observed that some parts of the reported algorithm can be parallelized, for example, Steps 1 and 2 or Steps 5 and 6.
		Furthermore, Steps 1--3 can be performed in parallel with Step 4 before applying the post-processing, as well as Steps 5--7 with Step 8.
\end{remark}}

{\color{black}
	\begin{remark}
		It should be observed that working with two sets of variables induces a computational overhead compared to approaches based on a single
		formulation. However, the computational cost is not doubled. For example, the same reconstructed cell interface values are shared by both
		the $\bm V$- and $\bm U$-solutions, and therefore no additional reconstruction procedure is required. Moreover, the evolution of the
		$\bm U$-solution is performed explicitly and is therefore computationally less demanding than the SI evolution of the $\bm V$-solution.
	\end{remark}
}

\subsection{Post-Processing}\label{sec43}
{\color{black}As mentioned in \S\ref{sec422},} upon completion of Steps 4 and 8, we {\color{black}replace $\xbar{\bm V}^{\,*}_{j,k}$ with
	$r\big(\bm V\big(\,\xbar{\bm U}^{\,*}_{j,k}\big),\,\xbar{\bm V}^{\,*}_{j,k}\big)$ and $\xbar{\bm V}^{\,n+1}_{j,k}$ with
	$r\big(\bm V\big(\,\xbar{\bm U}^{\,n+1}_{j,k}\big),\,\xbar{\bm V}^{\,n+1}_{j,k}\big)$, respectively. The function $r$ is selected based on
	the following considerations. Since the $\bm V$-solution is AP but nonconservative, and the $\bm U$-solution is conservative but non-AP, we
	use} their convex combination with coefficients dependent on $\ve$, leveraging the AP SI method in the low-Mach-number regime and the sharp
conservative CU scheme in the moderate- and high-Mach-number regimes---thus ensuring accuracy, stability, and physical consistency across
all flow regimes. Specifically, we select the following replacement function $r$:
\begin{equation}
	r\big(\bm V\big(\,\xbar{\bm U}_{j,k}\big),\,\xbar{\bm V}_{j,k}\big)=
	(1-s(\ve))\,\bm V\big(\,\xbar{\bm U}_{j,k}\big)+s(\ve)\,\xbar{\bm V}_{j,k},\quad\forall j,k,
	\label{star}
\end{equation}
where $s$ is a suitable switching function, which is supposed to be increasing, continuous, and satisfy $s(1)=0$ and $s(0)=1$. Moreover,
in the high-Mach-number regime, $s$ should be $\sim0$ so that the primitive variables $\bm V$ are almost completely overwritten by
$\bm V(\bm U)$, while, in the low-Mach-number regime, $s$ should be $\sim1$ so that the primitive variables $\bm V$ stay almost unchanged.
For intermediate values of $\ve$, a smooth transition between $1$ and $0$ is expected.

\section{Numerical Examples}\label{sec6}
In this section, we verify the accuracy and robustness of the proposed AP scheme on a variety of numerical examples across different values 
of $\ve$. In all of the numerical examples, we:

\smallskip
\noindent
$\bullet$ Take the minmod parameter $\theta=1.3$;

\smallskip
\noindent
$\bullet$ Adaptively select time steps based on the time-step restriction \eref{4.10} for the nonstiff part of the primitive system;

\smallskip
\noindent
$\bullet$ Set $\gamma=1.4$ (except for Example 1, in which $\gamma=2$);

\smallskip
\noindent
$\bullet$ Modify \eref{4.3} to
\begin{equation}
	\rho_{\max}=\max\limits_{(x,y)\in\Omega}\rho+\ve^4,\quad p_{\min}=\min\limits_{(x,y)\in\Omega}p-\ve^4.
	\label{4.9}
\end{equation}
Notice that this modification has almost no impact in the low-Mach-number regime, but it aims at adding more upwinding and thus improving
the stability property of the resulting AP scheme when $\ve$ is large;

\smallskip
\noindent
$\bullet$ Choose the following switching function:
\begin{equation*}
	s(\ve)=\begin{cases}1-\ve^{\alpha},&0<\ve\le\ve_0,\\\exp\bigg(1-\frac{1}{1-\left(\frac{\ve-\ve_0}{\ve_1-\ve_0}\right)^2}\bigg)
		\left[(1-\ve_0^\alpha)-(1-\ve_1)^\alpha\right]+(1-\ve_1)^\alpha,&\ve_0<\ve<\ve_1,\\(1-\ve)^\alpha,&\ve_1\le\ve\le1,\end{cases}
\end{equation*}
where $\ve_0$, $\ve_1$, and $\alpha$ are positive constants taken to be $\ve_0=0.15$, $\ve_1=0.4$, and $\alpha=14$ in all of the numerical
examples below. This switching function is plotted in Figure \ref{fig31}.
\begin{figure}[ht!]
	\centerline{\includegraphics[width=0.32\textwidth]{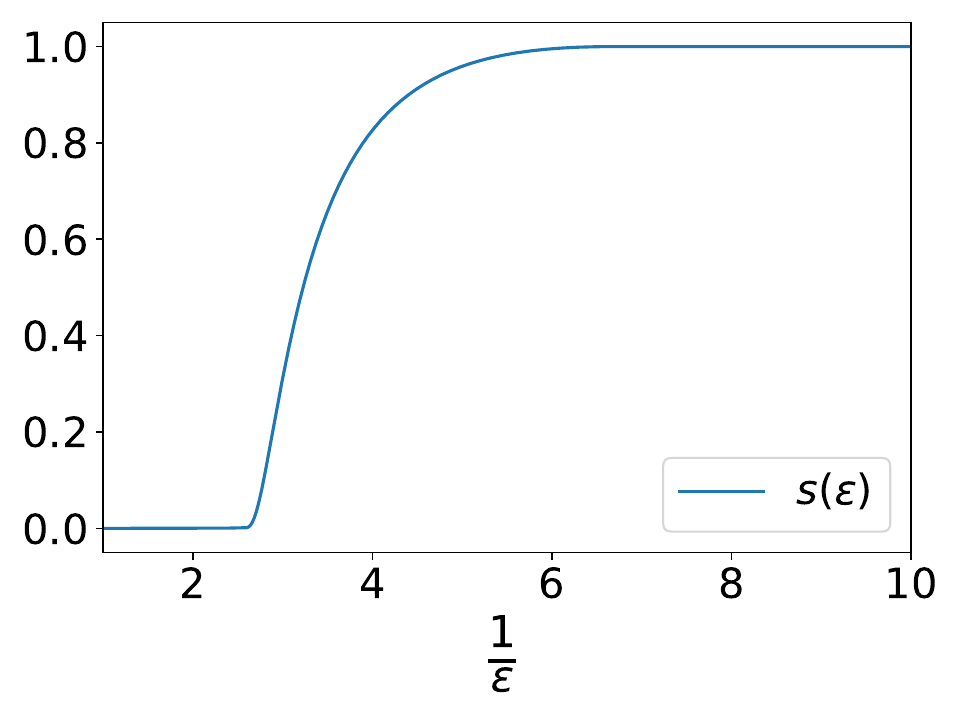}}
	\caption{\sf Switching function $s(\varepsilon)$ plotted with respect to $1/\ve$.\label{fig31}}
\end{figure}
{\color{black}
	\begin{remark}
		Modification \eref{4.9} is used to improve the handling of discontinuous solutions for large (intermediate)-Mach-number
		regimes. However, we would like to emphasize that it vanishes as $\ve\to0$ and therefore does not affect either the consistency of the
		method or the AP analysis presented in the paper.
\end{remark}}
{\color{black}
	\begin{remark}
		We stress that the same switching function, $s(\ve)$, together with the same values of the associated parameters, has been used in all of 
		the numerical examples reported below. The obtained numerical results indicate that this choice is sufficiently robust across the considered
		Mach-number regimes. {\color{black}Further studies on the optimal design of the switching function and on the selection of its parameters
			may constitute interesting directions for future research.}
\end{remark}}

\subsubsection*{Example 1---Accuracy Test for Low-Mach-Number Smooth Vortex}
In this example taken from \cite{zeifang2020novel}, we consider a smooth, unsteady Mach dependent vortex over the computational domain
$[-10,10]\times[-10,10]$ subject to the periodic boundary conditions. The analytical solution is given, modulo the periodicity, by
\begin{equation*}
	\begin{aligned}
		\rho(\bm x_r)&=1-\frac{\ve^2}{16\pi^2}\,{\rm e}^{1-\|\bm x_r\|_2^2},\quad
		u(\bm x_r)=1-\frac{\ve y_r}{2\pi}\,{\rm e}^{\frac{1-\|\bm x_r\|_2^2}{2}},\quad
		v(\bm x_r)=1+\frac{\ve x_r}{2\pi}\,{\rm e}^{\frac{1-\|\bm x_r\|_2^2}{2}},\\
		E(\bm x_r)&=1+\ve^2\Big[\rho^2(\bm x_r)+\frac{\rho(\bm x_r)}{2}\big(u^2(\bm x_r)+v^2(\bm x_r)\big)\Big],
	\end{aligned}
\end{equation*}
where $\bm x_r(x,y,t)=(x_r,y_r)^\top:=(x-t,y-t)^\top$.

We take the CFL number $K_{\rm CFL}=0.475$ and compute the numerical solution until the final time $t=0.1$ on a series of uniform
$N\times N$ meshes with $N=64$, $128$, $256$, and $512$ for $\ve=1$, $0.1$, $0.01$, and $0.001$. {\color{black}We study the convergence in
	terms of the $L^1$-errors, defined for each scalar component $V$ of the $\bm V$-solution at the final time as
	$$
	\dx\dy\sum_{j,k}\abs{V_{j,k}(t=0.1)-V(x_j,y_k,t=0.1)},
	$$ 
	where $V(x,y,t)$ denotes the corresponding component of the exact solution. The obtained errors are reported in Figure \ref{fig42}, showing}
that the expected second-order convergence rate has been achieved in all variables for all considered $\ve$. One can also observe that, for
fixed mesh refinement, the error decreases for decreasing $\ve$ as a result of the convergence of the analytical solution to the
incompressible limit $(\rho,u,v,p)^\top=(1,1,1,1)^\top$ and of the AP character of the proposed AP DF-FV scheme.
\begin{figure}[ht!]
	\centerline{\includegraphics[width=0.28\textwidth]{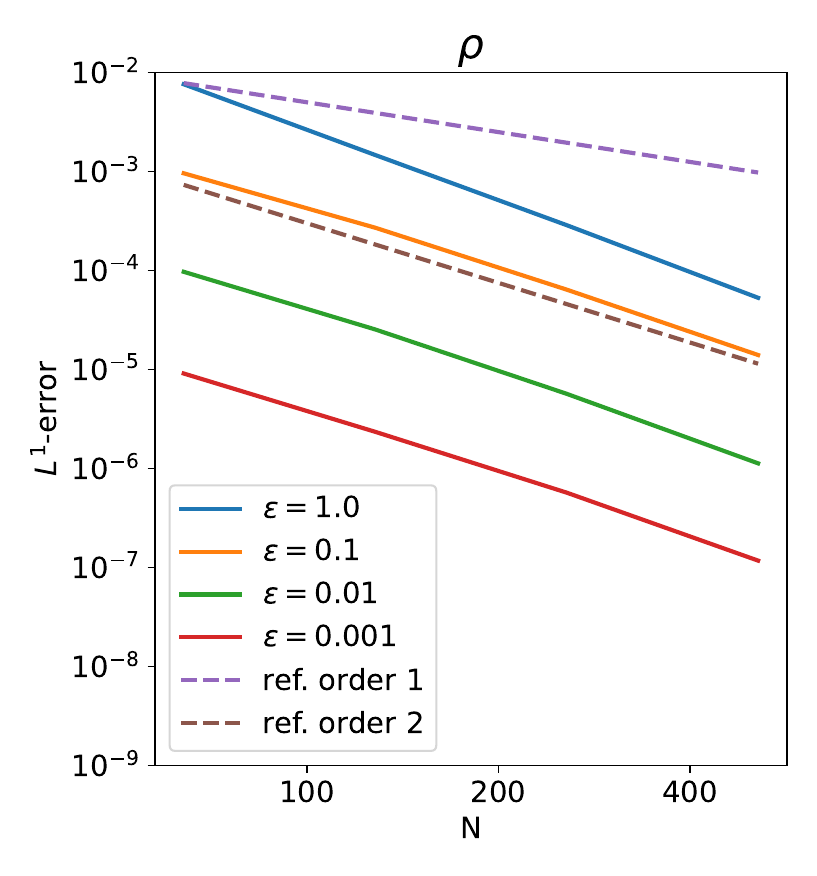}\hspace*{0.8cm}
		\includegraphics[width=0.28\textwidth]{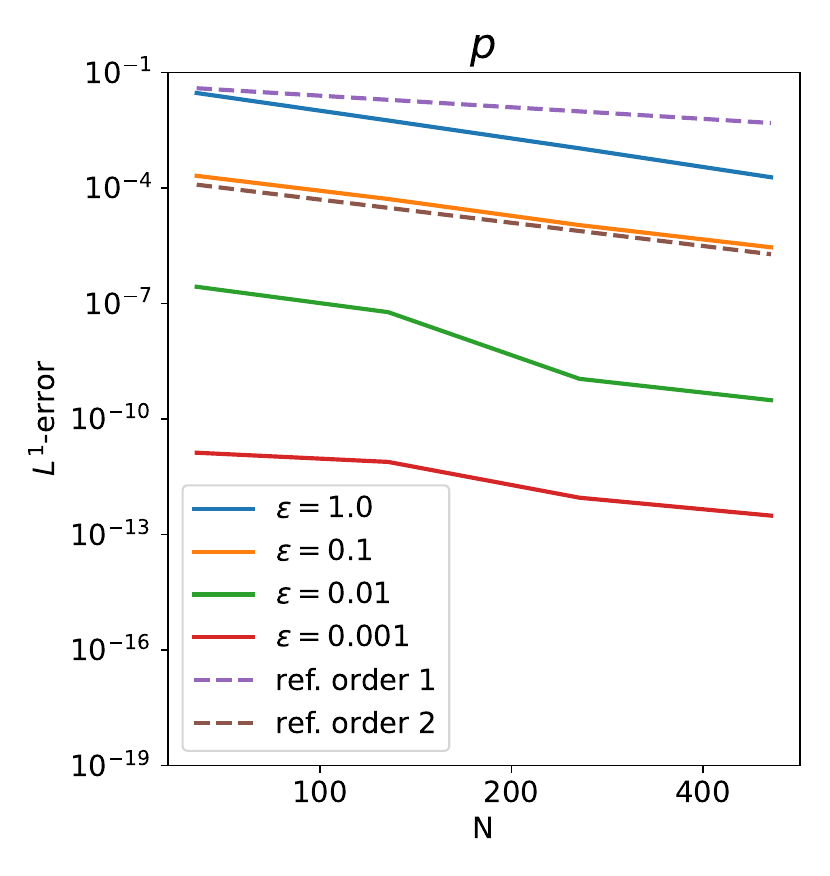}}
	\vskip10pt
	\centerline{\includegraphics[width=0.28\textwidth]{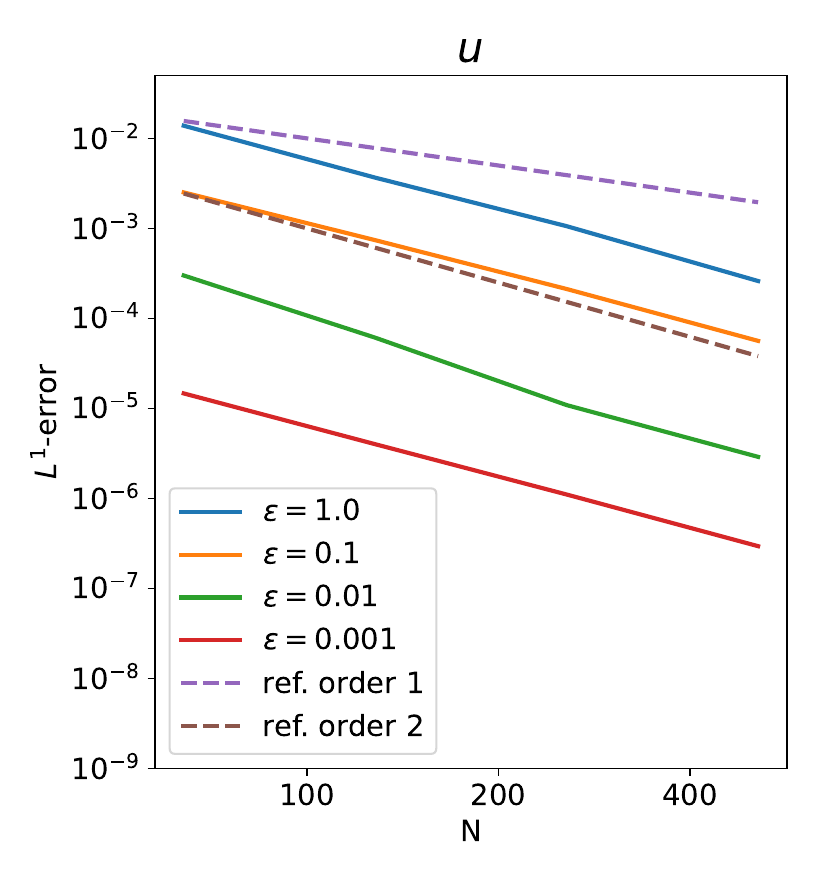}\hspace*{0.8cm}
		\includegraphics[width=0.28\textwidth]{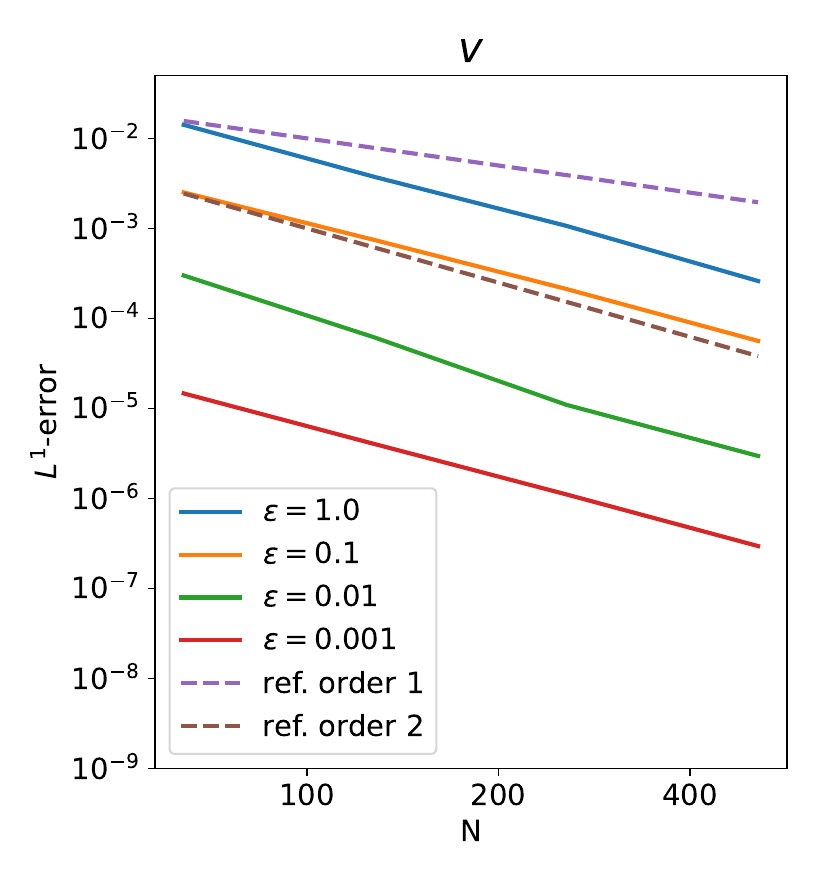}}
	\caption{\sf Example 1: Convergence analysis.\label{fig42}}
\end{figure}

\subsubsection*{Example 2---Gresho Vortex}
This example was introduced in \cite{gresho1990theory} and, since then, it has been widely used as a common benchmark to numerically validate
the AP property. We consider a steady vortex over the computational domain $[0,1]\times[0,1]$ subject to the periodic boundary conditions.
At any time $t$, the shape of the vortex is given by
\begin{equation*}
	\begin{aligned}
		\rho(r)&\equiv1,\quad u(r)=-\frac{y_r}{r}\psi(r),\quad v(r)=\frac{x_r}{r}\psi(r),\\
		p(r)&=\begin{cases}
			1+12.5\ve^2r^2,&r<0.2,\\
			1+\ve^2\left(4\ln(5r)+4-20r+12.5r^2\right),&0.2\le r<0.4,\\
			1+\ve^2(4\ln2-2),&r\ge0.4,
		\end{cases}
	\end{aligned}
\end{equation*}
where
\begin{equation*}
	x_r:=x-0.5,\quad y_r:=y-0.5,\quad r:=\sqrt{x_r^2+y_r^2},\quad
	\psi(r):=\begin{cases}
		5r,&r<0.2,\\
		2-5r,&0.2\le r<0.4,\\
		0,&r\ge0.4.
	\end{cases}
\end{equation*}

We take the CFL number $K_{\rm CFL}=0.475$ and compute the numerical solution until the final time $t=1$ on a uniform $128\times128$ mesh
for $\ve=10^{-\alpha}$ with $\alpha=1,\dots,6$, and report the obtained local Mach number, defined as $\|\bm u\|_2/\sqrt{\gamma}$, in 
Figure \ref{fig43} along with its initial distribution. According to what is expected due to the AP feature of the scheme, the shape of the 
vortex is preserved and no evident dependency on $\ve$ can be observed.
\begin{figure}[ht!]
	\centerline{\includegraphics[width=0.33\textwidth]{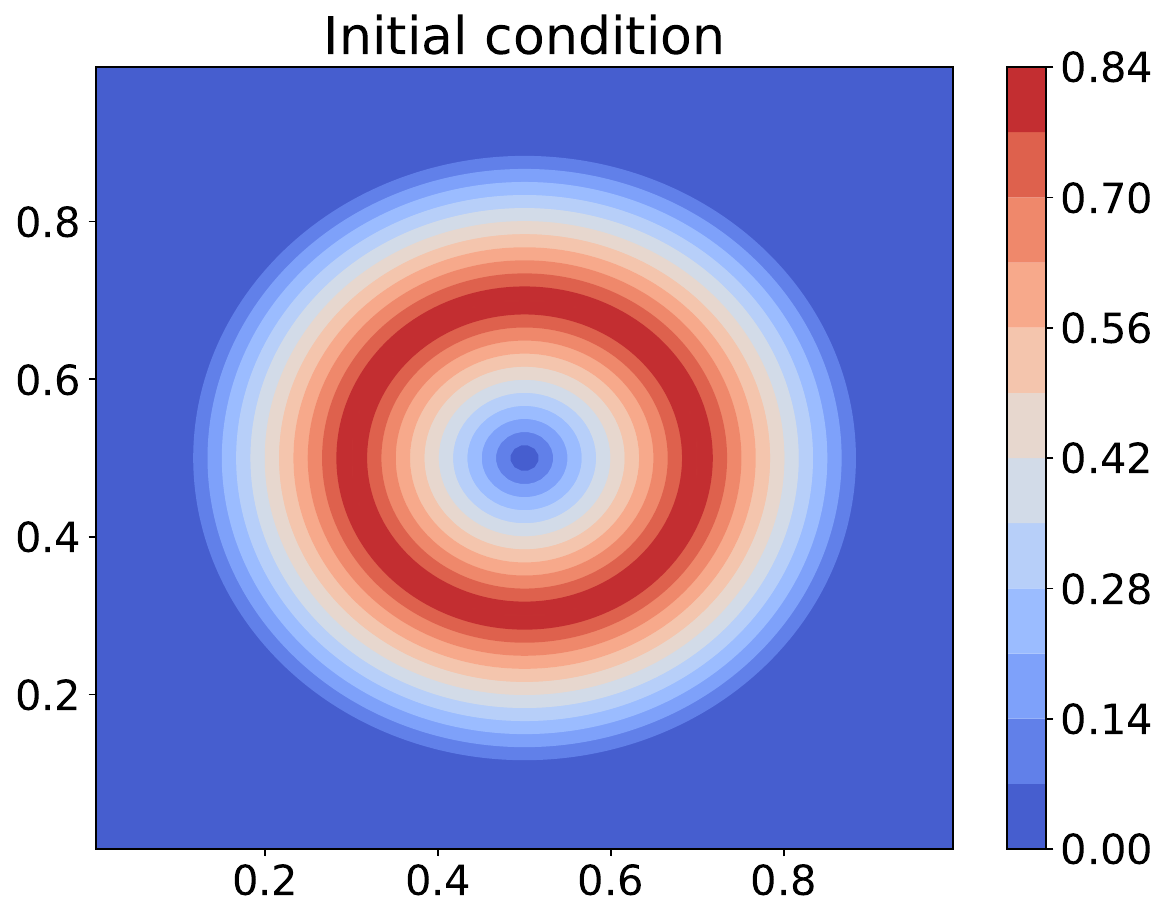}}
	\vskip8pt
	\centerline{\includegraphics[width=0.33\textwidth]{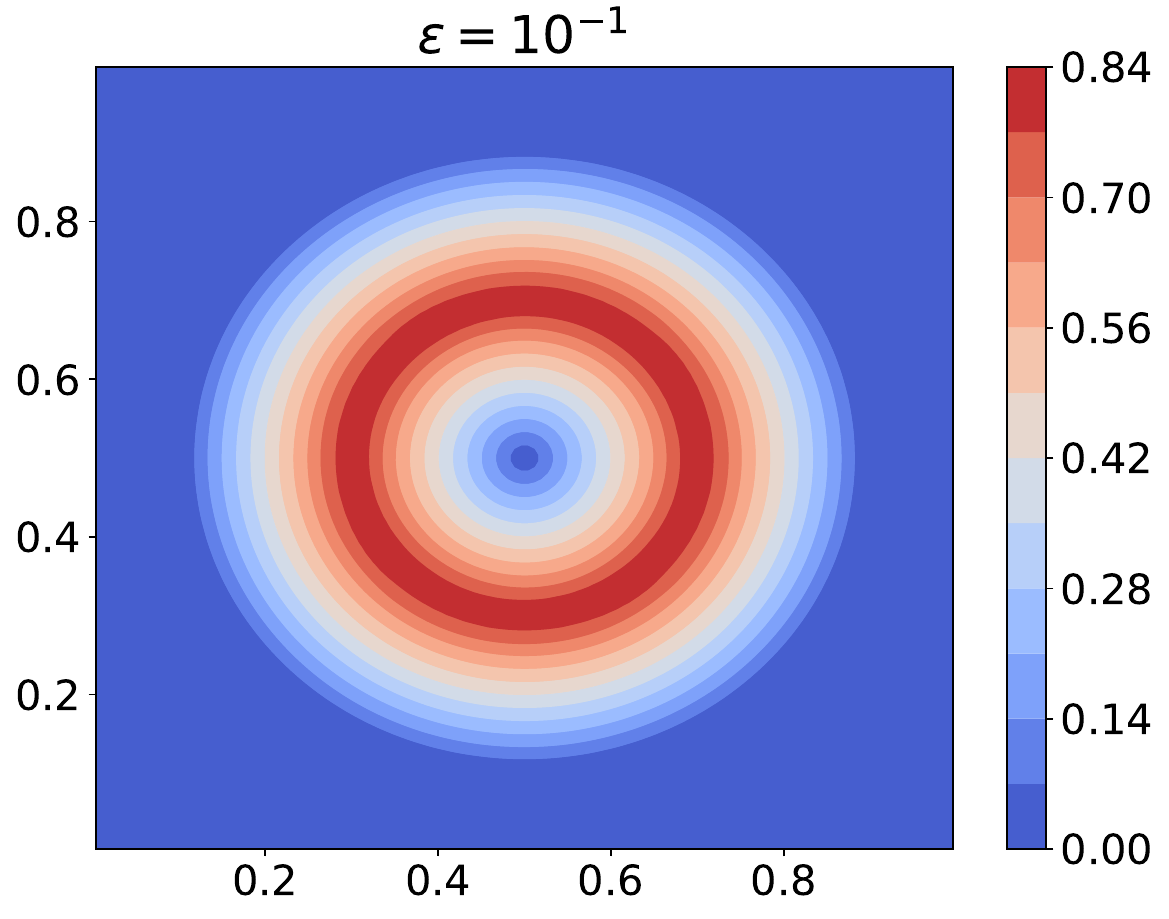}\hspace*{0.1cm}
		\includegraphics[width=0.33\textwidth]{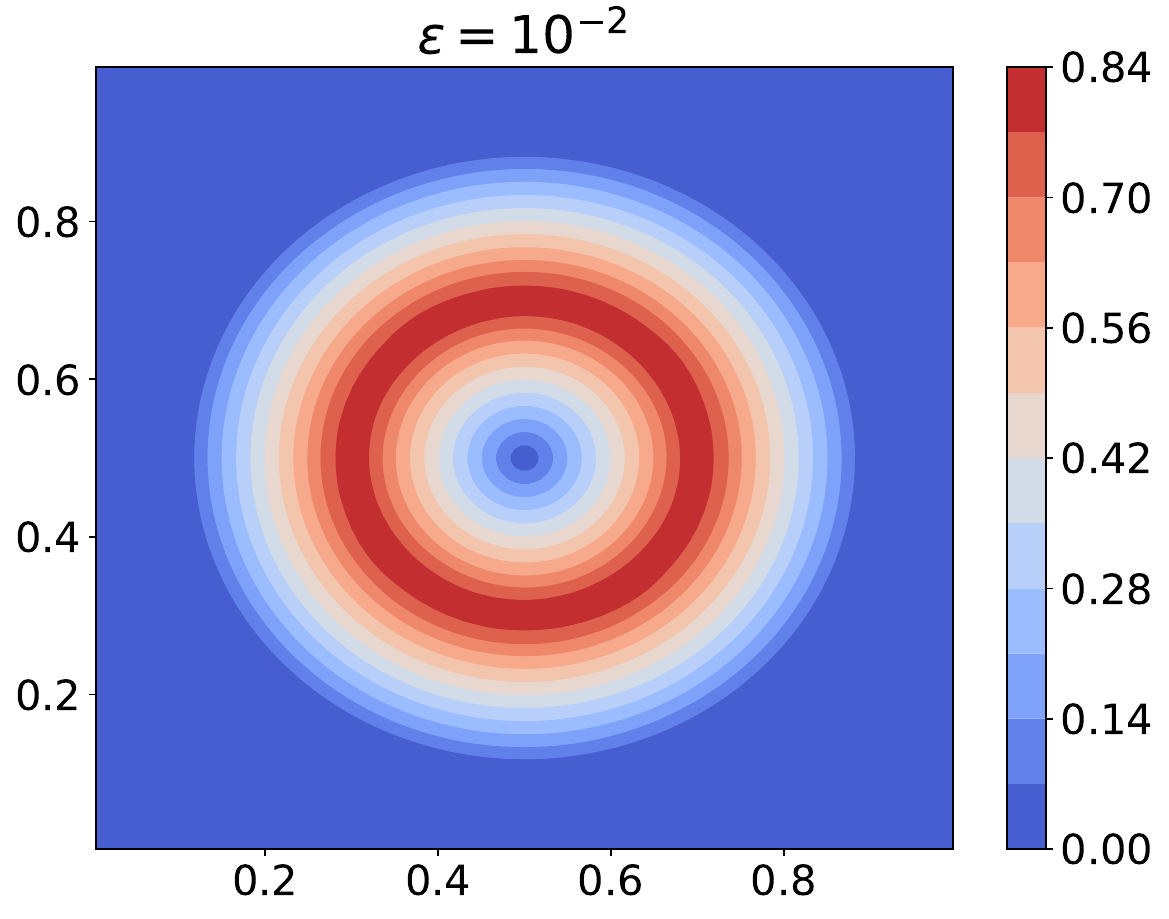}\hspace*{0.1cm}
		\includegraphics[width=0.33\textwidth]{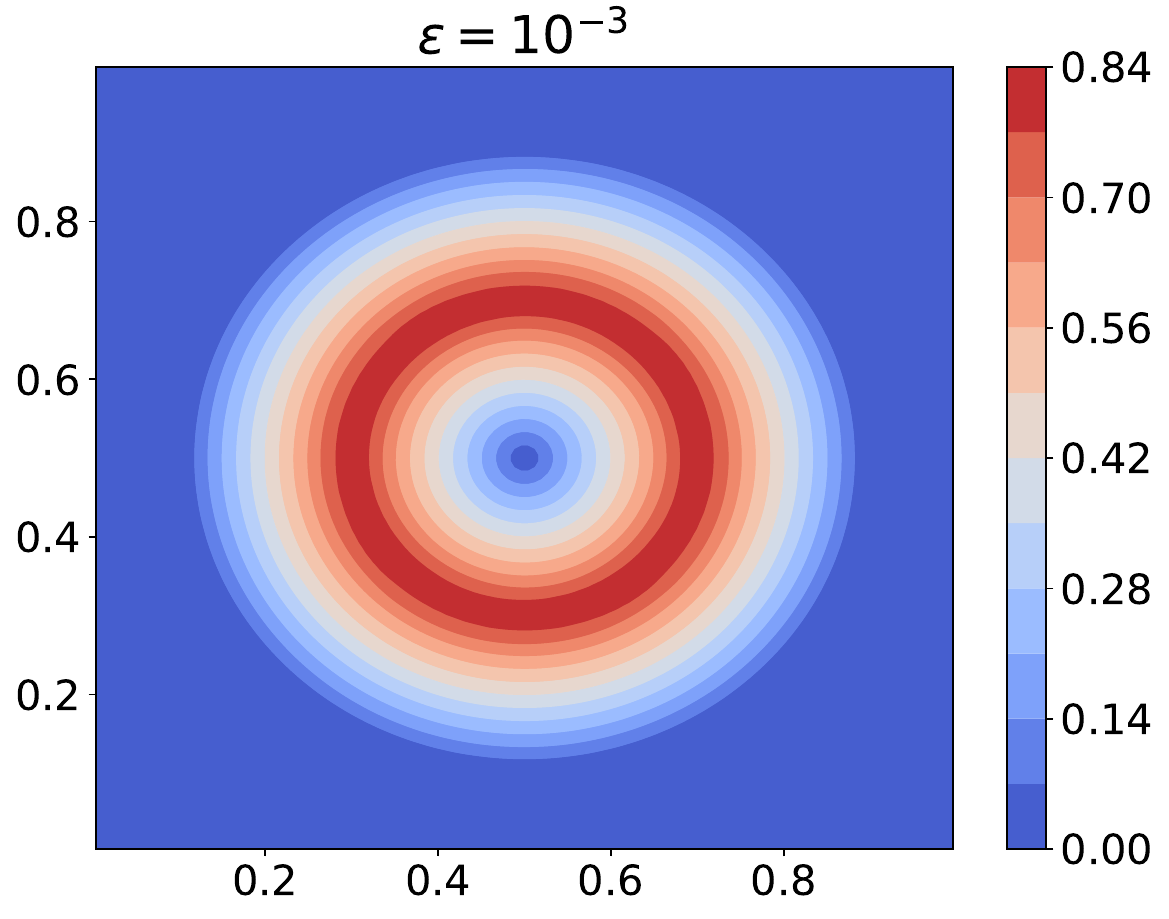}}
	\vskip8pt
	\centerline{\includegraphics[width=0.33\textwidth]{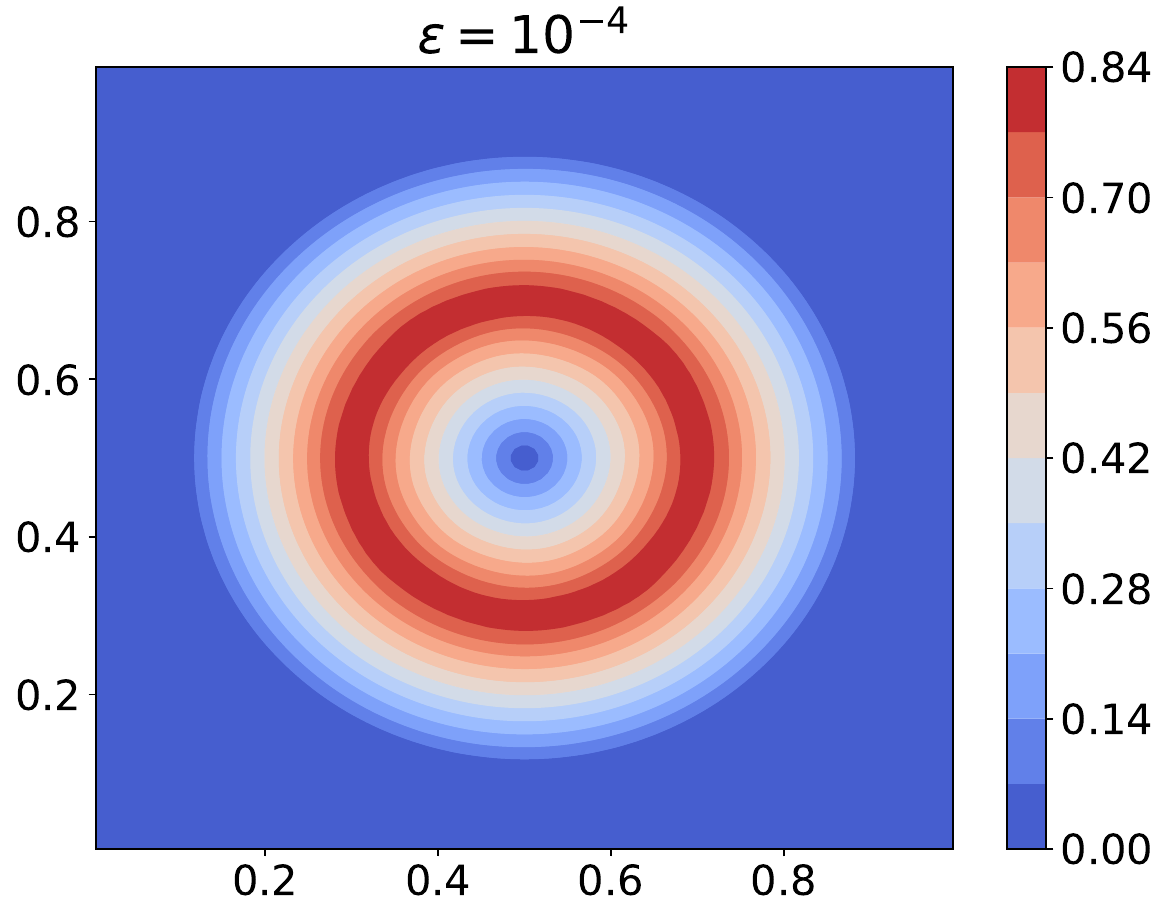}\hspace*{0.1cm}
		\includegraphics[width=0.33\textwidth]{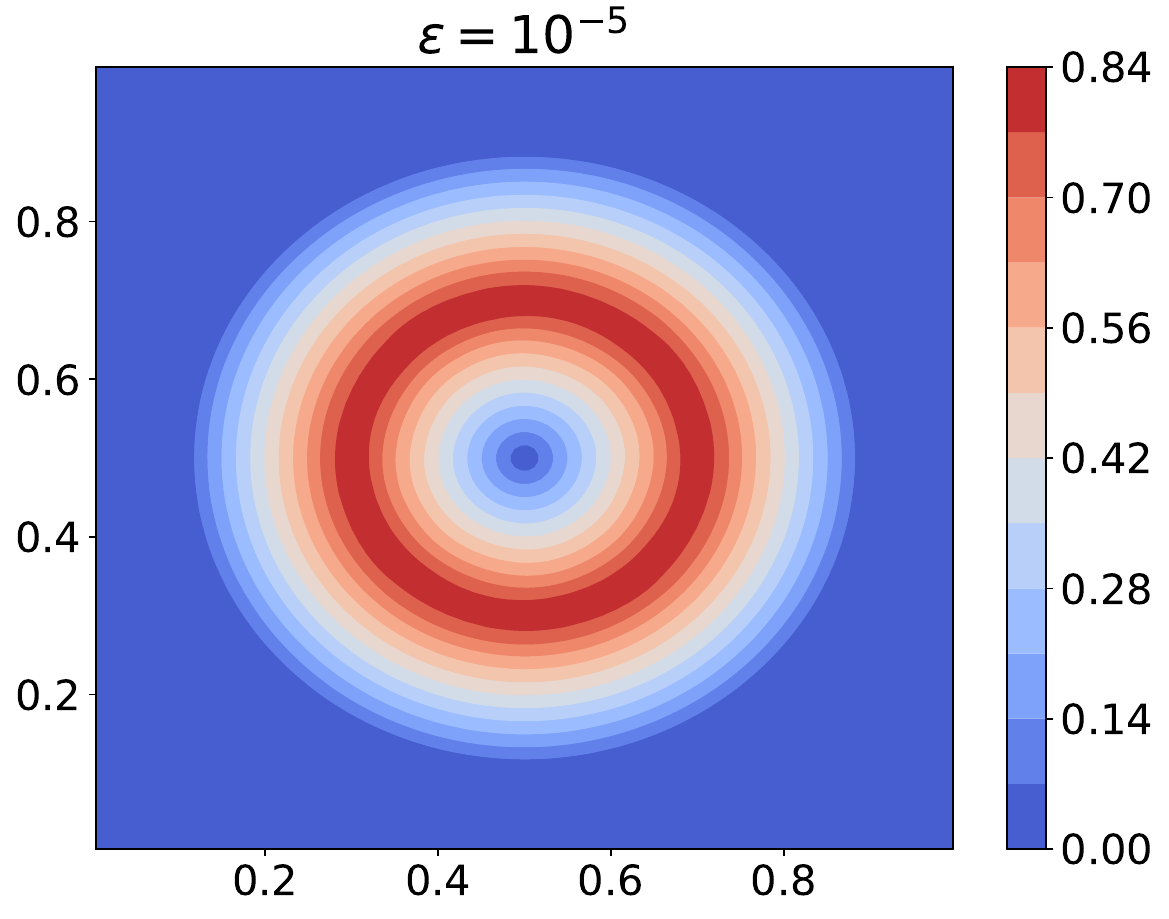}\hspace*{0.1cm}
		\includegraphics[width=0.33\textwidth]{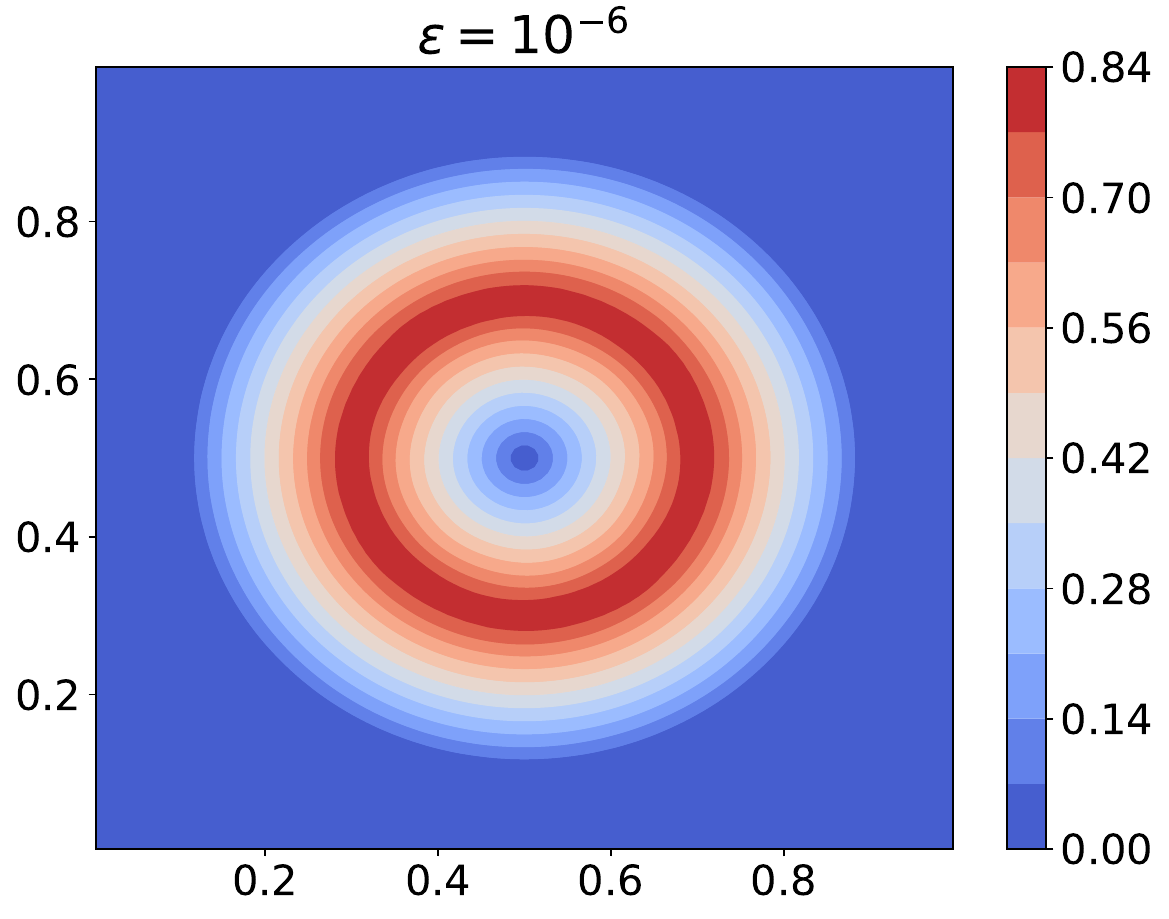}}
	\caption{\sf Example 2: Initial local Mach number independently of $\ve$ (top) and local Mach number at $t=1$ for different values of $\ve$.
		\label{fig43}}
\end{figure}

\subsubsection*{Example 3---Baroclinic Vorticity Generation}
In this example taken from \cite{noelle2014weakly}, we consider a low-Mach-number flow with $\ve=0.05$ involving an acoustic wave, which
moves within two density layers in the computational domain $[-\frac{1}{\ve},\frac{1}{\ve}]\times[0,\frac{2}{5\ve}]$ subject to the 
periodic boundary conditions. The initial conditions are
\begin{equation*}
	\begin{aligned}
		&\rho(x,y,0)=1+\frac{\ve}{2000}[1+\cos(\ve\pi x)]+4.5\ve y-\begin{cases}0,&0\le y\le\frac{1}{5\ve},\\1.8,&\mbox{otherwise,}\end{cases}\\
		&u(x,y,0)=\frac{\sqrt{\gamma}}{2}[1+\cos(\ve\pi x)],\quad v(x,y,0)\equiv0,\quad p(x,y,0)=1+\frac{\ve\gamma}{2}[1+\cos(\ve\pi x)].
	\end{aligned}
\end{equation*}
{\color{black}It should be observed that the initial density discontinuity is not accompanied by a corresponding pressure discontinuity.
	Therefore, the initial data are not expected to generate any strong propagating compressible waves. On the other hand,} the acoustic wave
induces different accelerations in the two density layers, which results in rotational excitation and in the formation of a long-wavelength
sinusoidal shear layer. Due to the interaction with the acoustic wave, such a shear layer becomes unstable, and several
Kelvin-Helmholtz-type unstable structures originate from it.

The numerical solution is computed with the CFL number $K_{\rm CFL}=0.475$ until the final time $t=20$ on a $800\times160$ uniform mesh. 
The density at times $t=0$, $10$, and $20$ is plotted in Figure \ref{fig44}. Since the solution develops instabilities, no strong 
convergence is expected in this example; see \cite{zeifang2020novel}. One can, however, observe that the underlying physics is correctly 
captured.
\begin{figure}[ht!]
	\centerline{\includegraphics[width=0.9\textwidth]{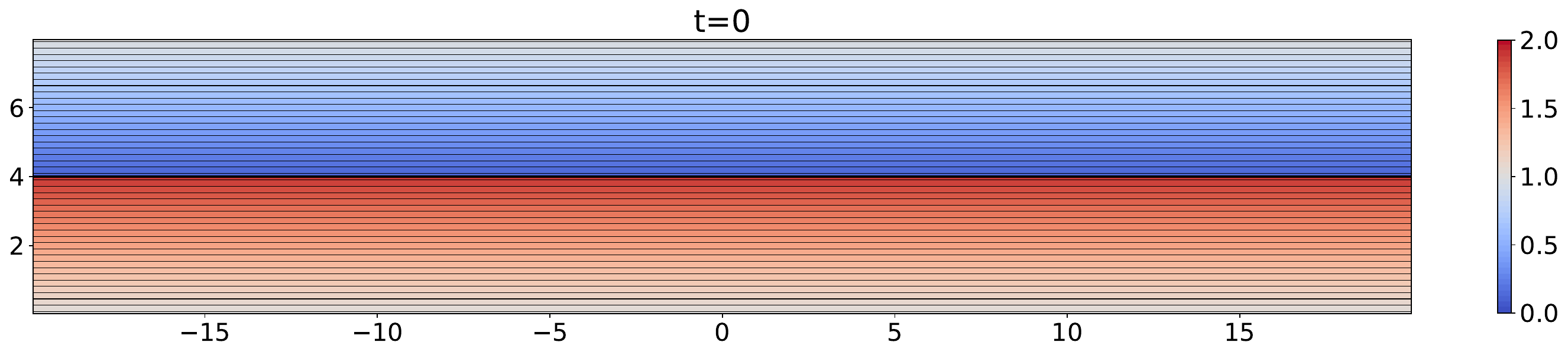}}
	\centerline{\includegraphics[width=0.9\textwidth]{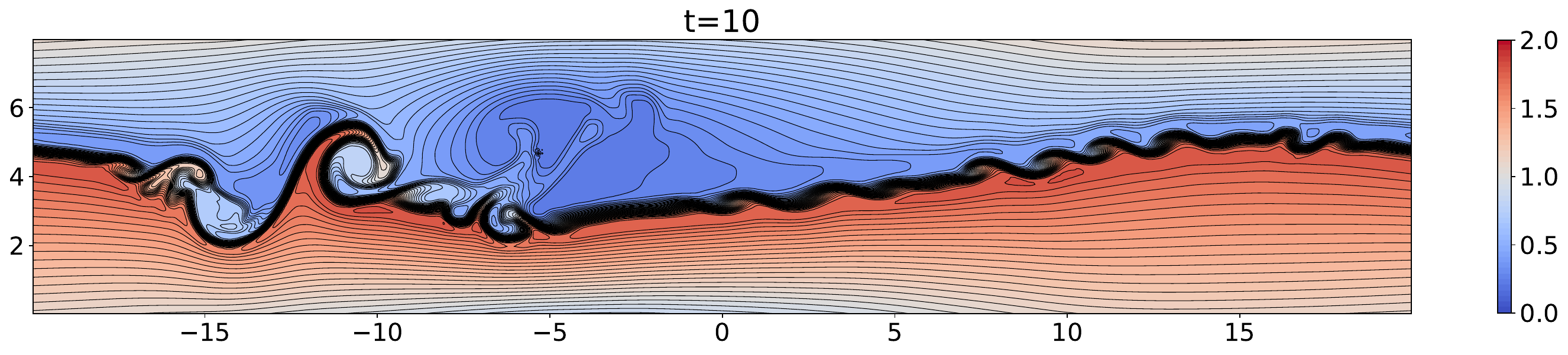}}
	\centerline{\includegraphics[width=0.9\textwidth]{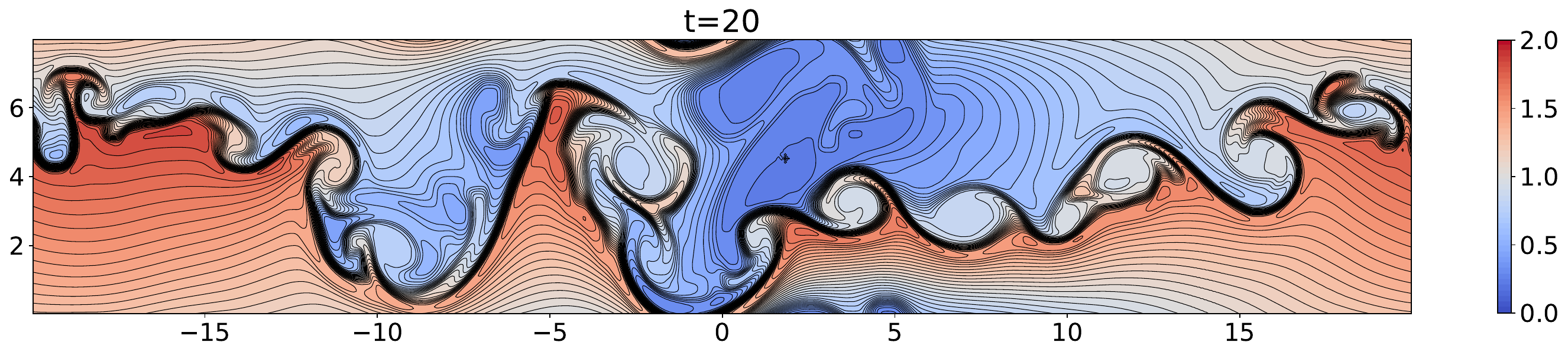}}
	\caption{\sf Example 3: Density at different times.\label{fig44}}
\end{figure}

\subsubsection*{Example 4---Double Shear Layer Problem}
In the following test case, originally introduced in \cite{bell1989second} for the incompressible Navier–Stokes equations and subsequently 
adopted in, e.g., \cite{zeifang2020novel,boscarino2018all,weinan1994numerical} in the context of compressible Euler equations in the
low-Mach-number regime, a shear layer develops, and the AP property of the proposed scheme can be assessed. In particular, we would like to
check whether the scheme maintains its consistency for small values of $\ve$, that is, in the almost incompressible regime.

The initial conditions,
\begin{align*}
	&\rho(x,y,0)\equiv\frac{\pi}{15},\quad
	u(x,y,0)=\left\{\begin{aligned}
		&\tanh\!\left[15\Big(\frac{y}{\pi}-\frac{1}{2}\Big)\right],&&y\le\pi,\\[0.5ex]
		&\tanh\!\left[15\Big(\frac{3}{2}-\frac{y}{\pi}\Big)\right],&&\mbox{otherwise},
	\end{aligned}\right.\quad 
	v(x,y,0)=0.05\sin x,\\
	&p(x,y,0)\equiv\frac{1}{\gamma},
\end{align*}
are prescribed in the computational domain $[0,2\pi]\times[0,2\pi]$ subject to the periodic boundary conditions. The initial vorticity
$\omega:=v_x-u_y$, where the derivatives are approximated using second-order central differences, is plotted in Figure \ref{fig45}.
\begin{figure}[ht!]
	\centerline{\includegraphics[width=0.33\textwidth]{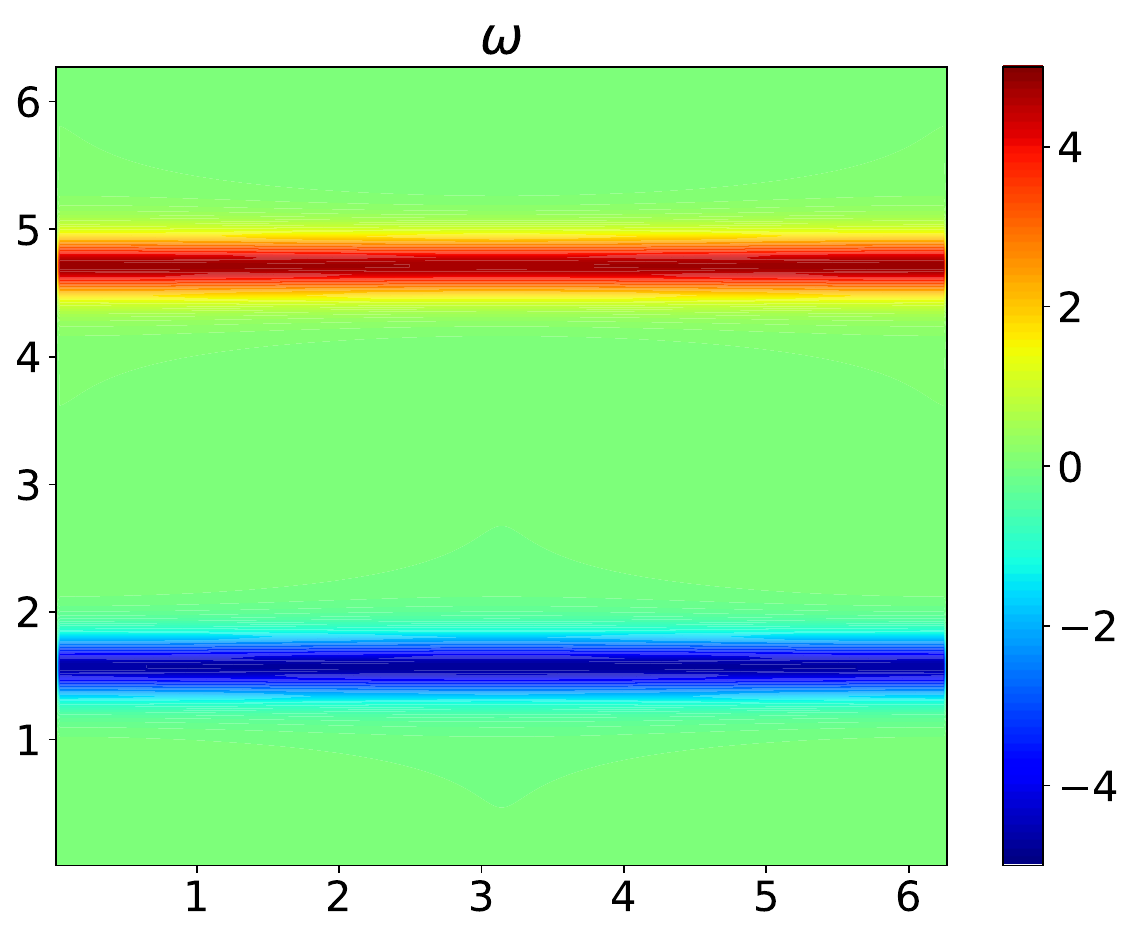}}
	\caption{\sf Example 4: Initial vorticity.\label{fig45}}
\end{figure}

We compute the numerical solutions for $\ve=10^{-\alpha}$ with $\alpha=1,\dots,6$ until the final time $t=10$ on a $256\times256$ uniform
mesh using $K_{\rm CFL}=0.1$. Figures \ref{fig46} and \ref{fig47} display the vorticity at times $t=6$ and $t=10$, respectively, for
different $\ve$. The obtained results are consistent with those reported in \cite{zeifang2020novel,boscarino2018all}. Moreover, no
macroscopic dependence on $\ve$ is observed, providing further evidence of the AP property of the proposed DF-FV scheme.
\begin{figure}[ht!]
	\centerline{\includegraphics[width=0.33\textwidth]{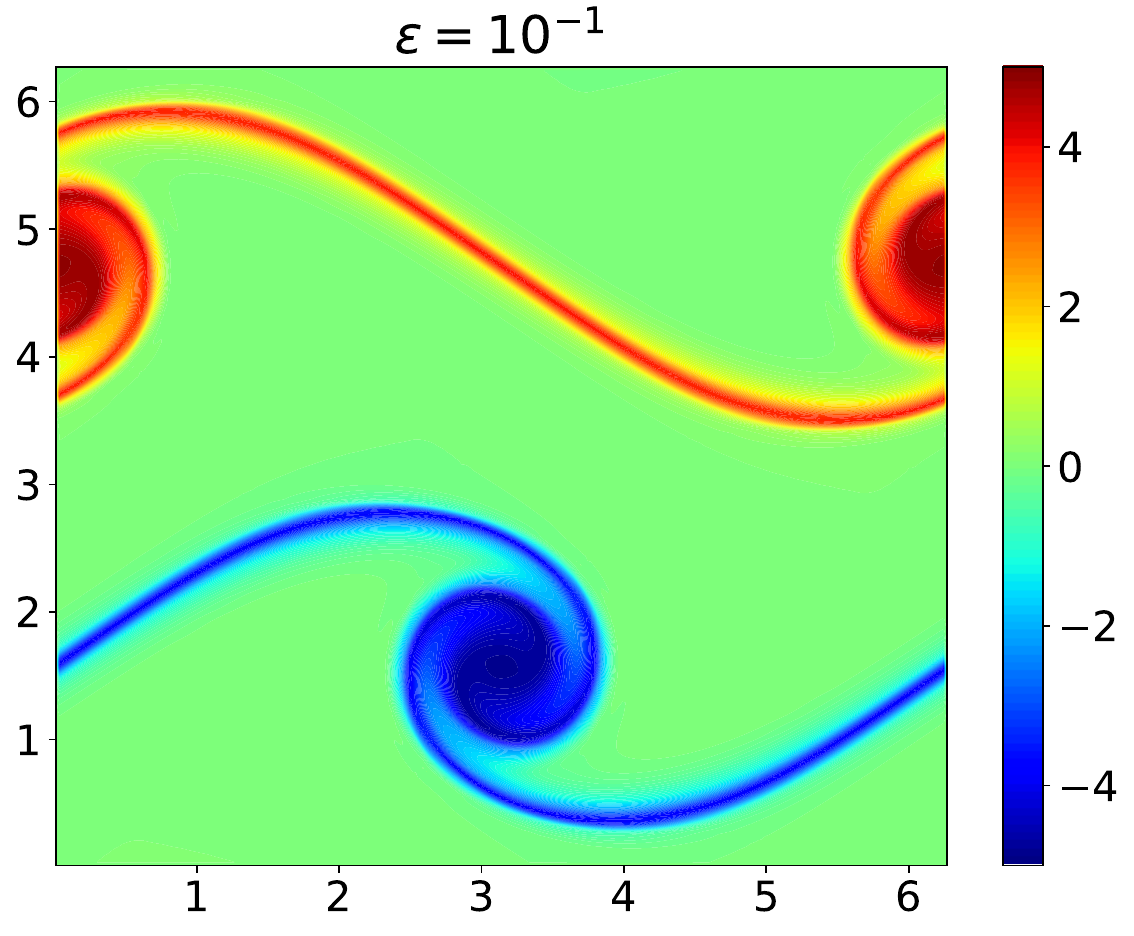}\hspace*{0.1cm}
		\includegraphics[width=0.33\textwidth]{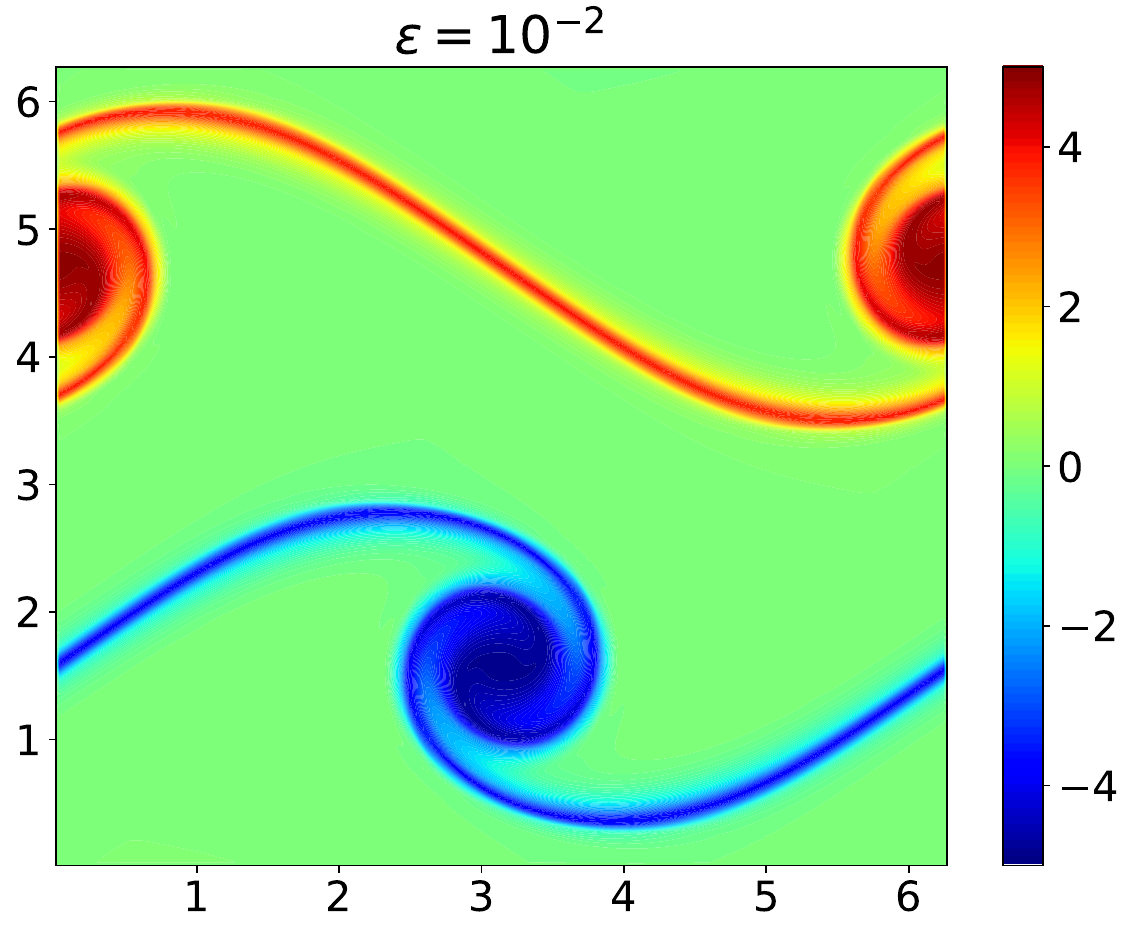}\hspace*{0.1cm}
		\includegraphics[width=0.33\textwidth]{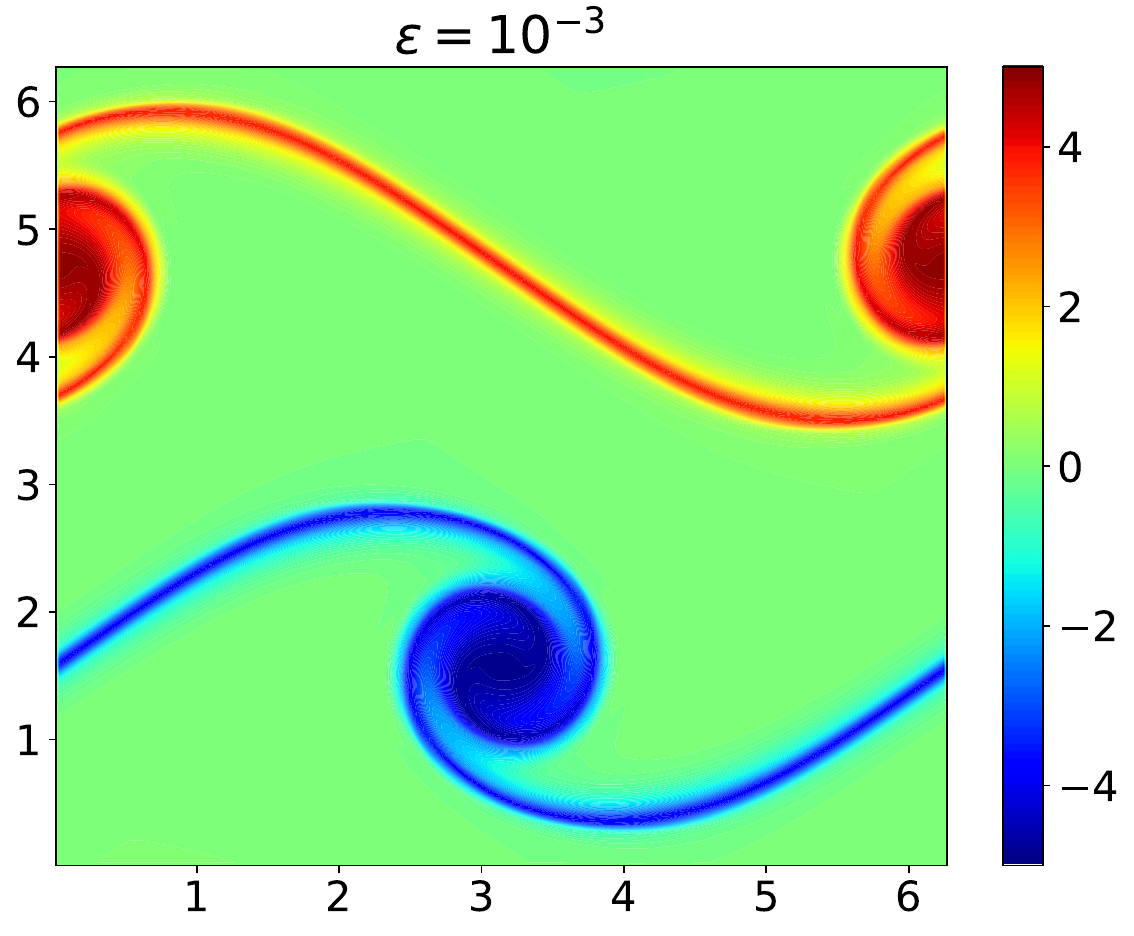}}
	\vskip8pt
	\centerline{\includegraphics[width=0.33\textwidth]{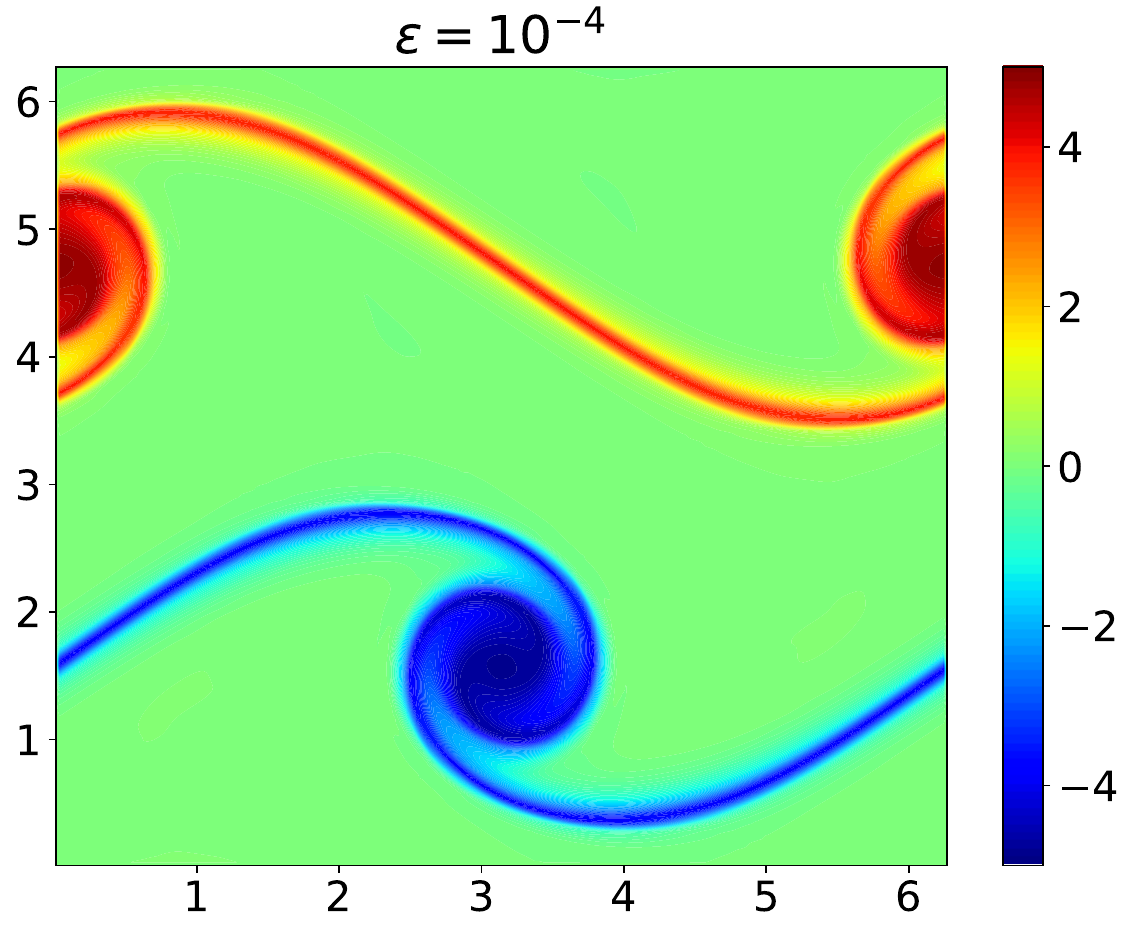}\hspace*{0.1cm}
		\includegraphics[width=0.33\textwidth]{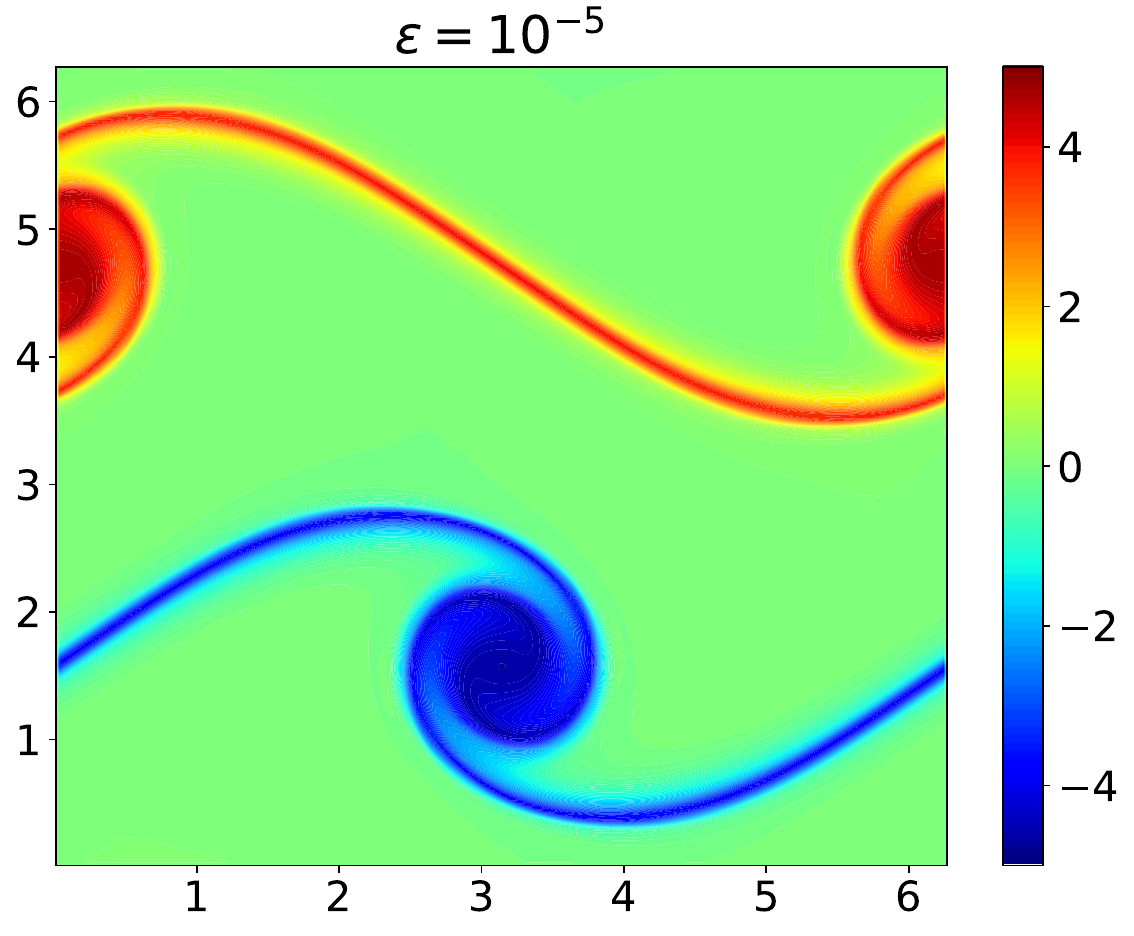}\hspace*{0.1cm}
		\includegraphics[width=0.33\textwidth]{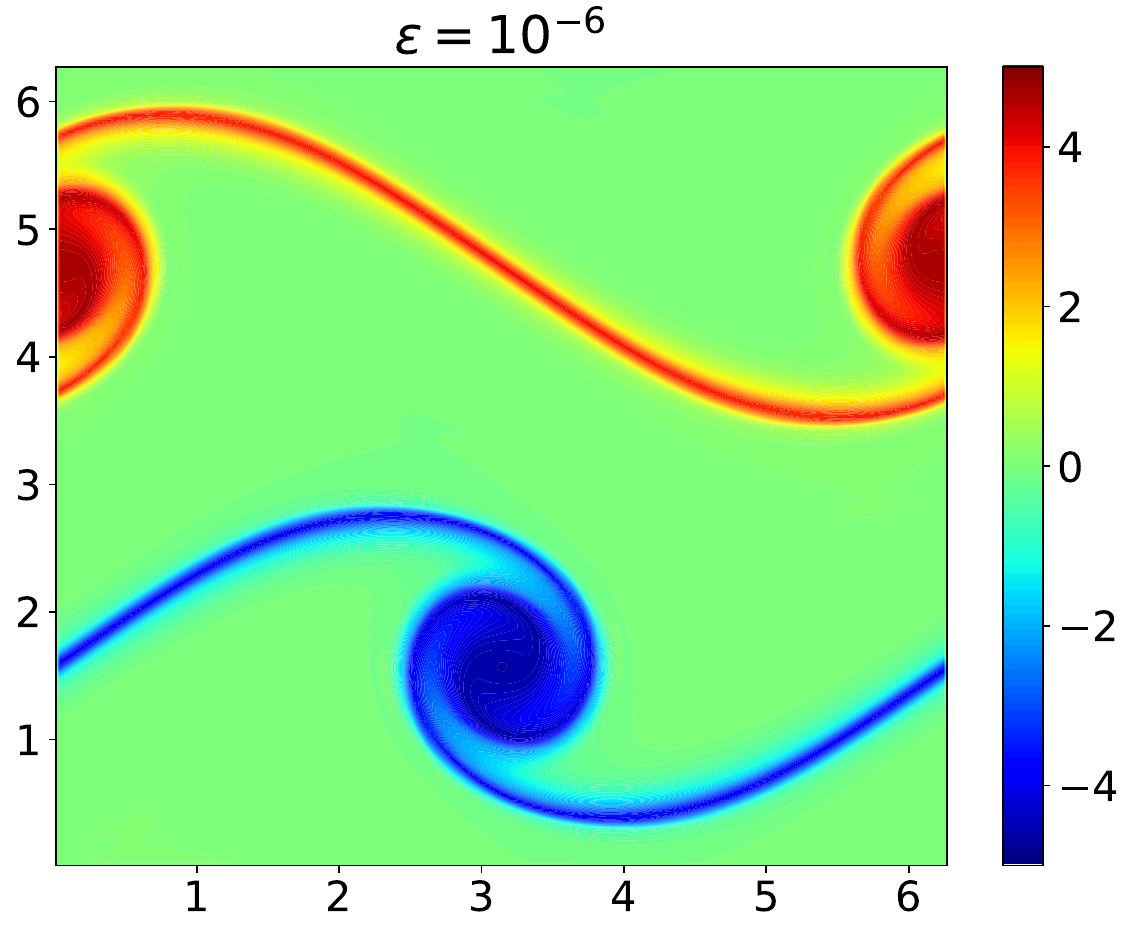}}
	\caption{\sf Example 4: Vorticity at $t=6$ for different values of $\ve$. $K_{\rm CFL}=0.1$.\label{fig46}}
\end{figure}
\begin{figure}[ht!]
	\centerline{\includegraphics[width=0.33\textwidth]{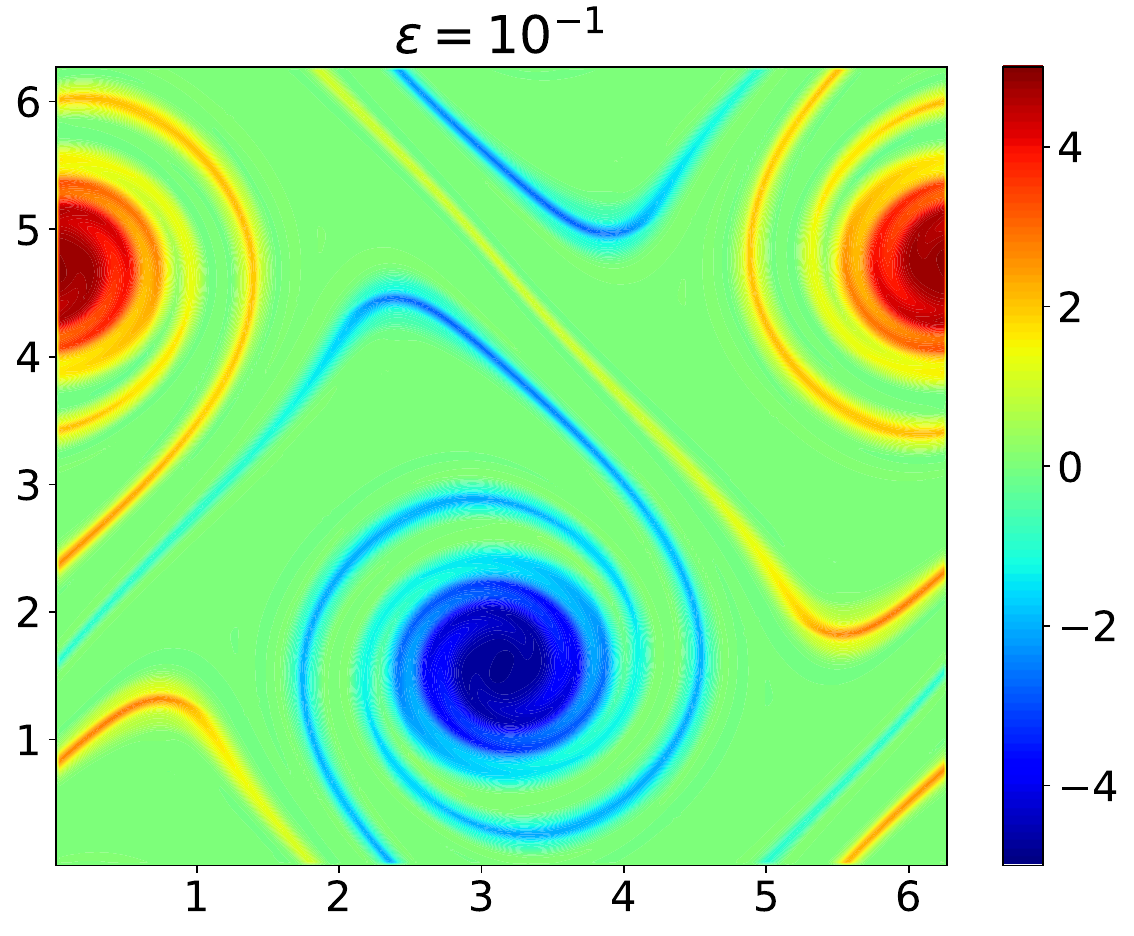}\hspace*{0.1cm}
		\includegraphics[width=0.33\textwidth]{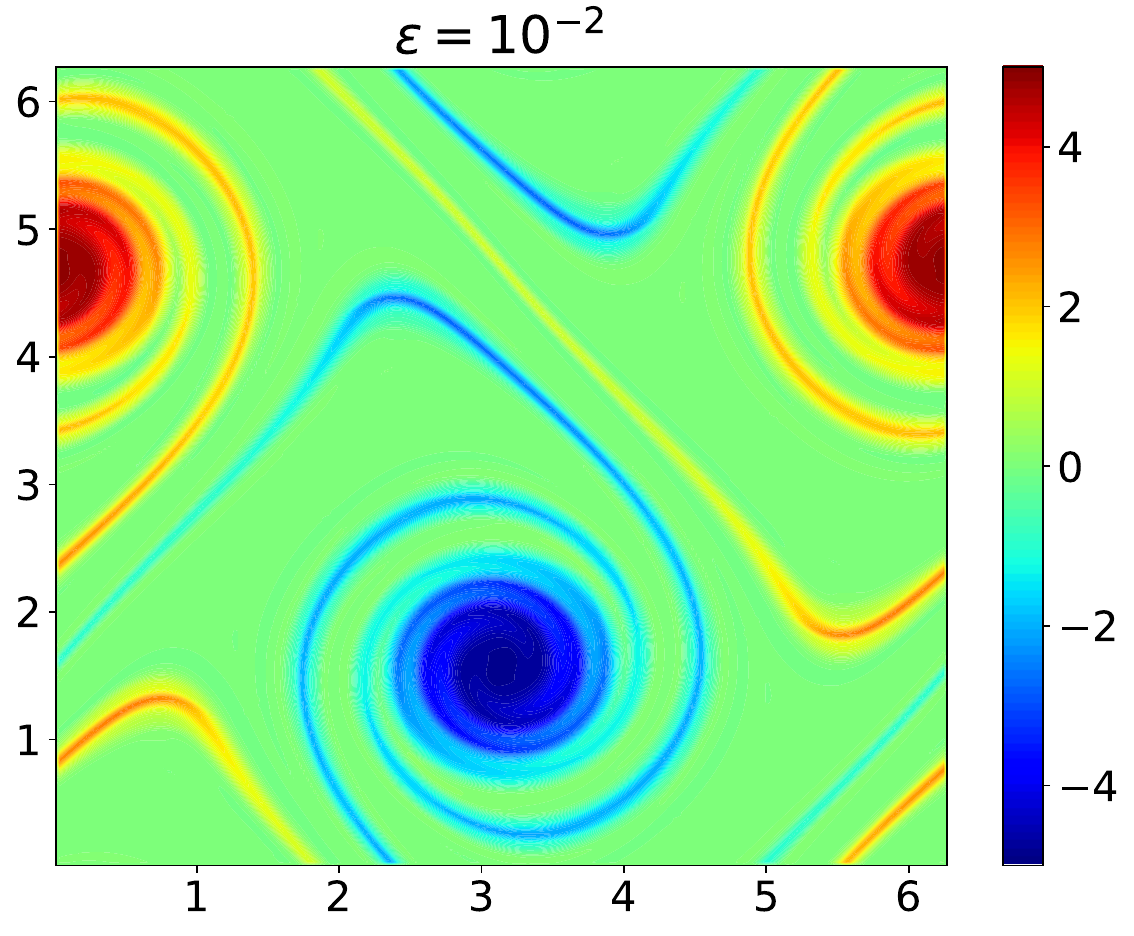}\hspace*{0.1cm}
		\includegraphics[width=0.33\textwidth]{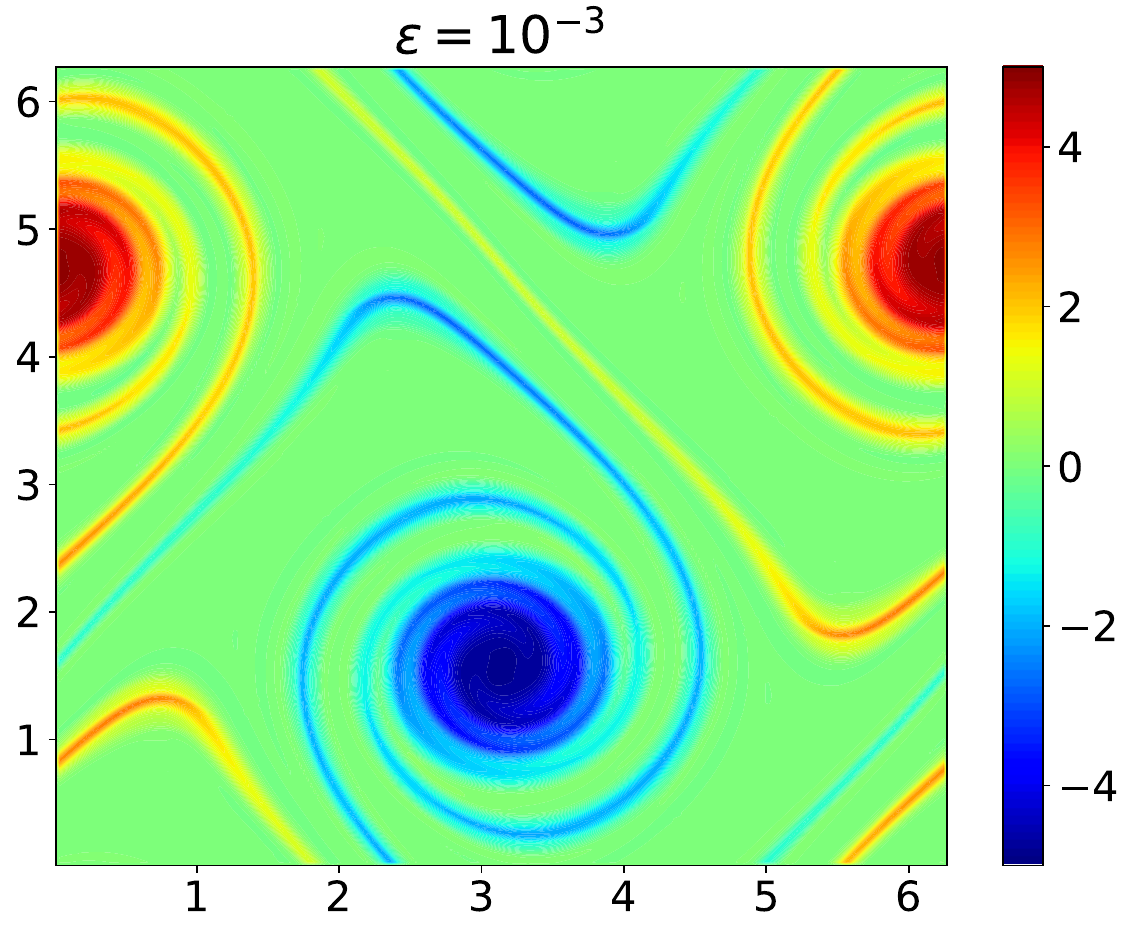}}
	\vskip8pt
	\centerline{\includegraphics[width=0.33\textwidth]{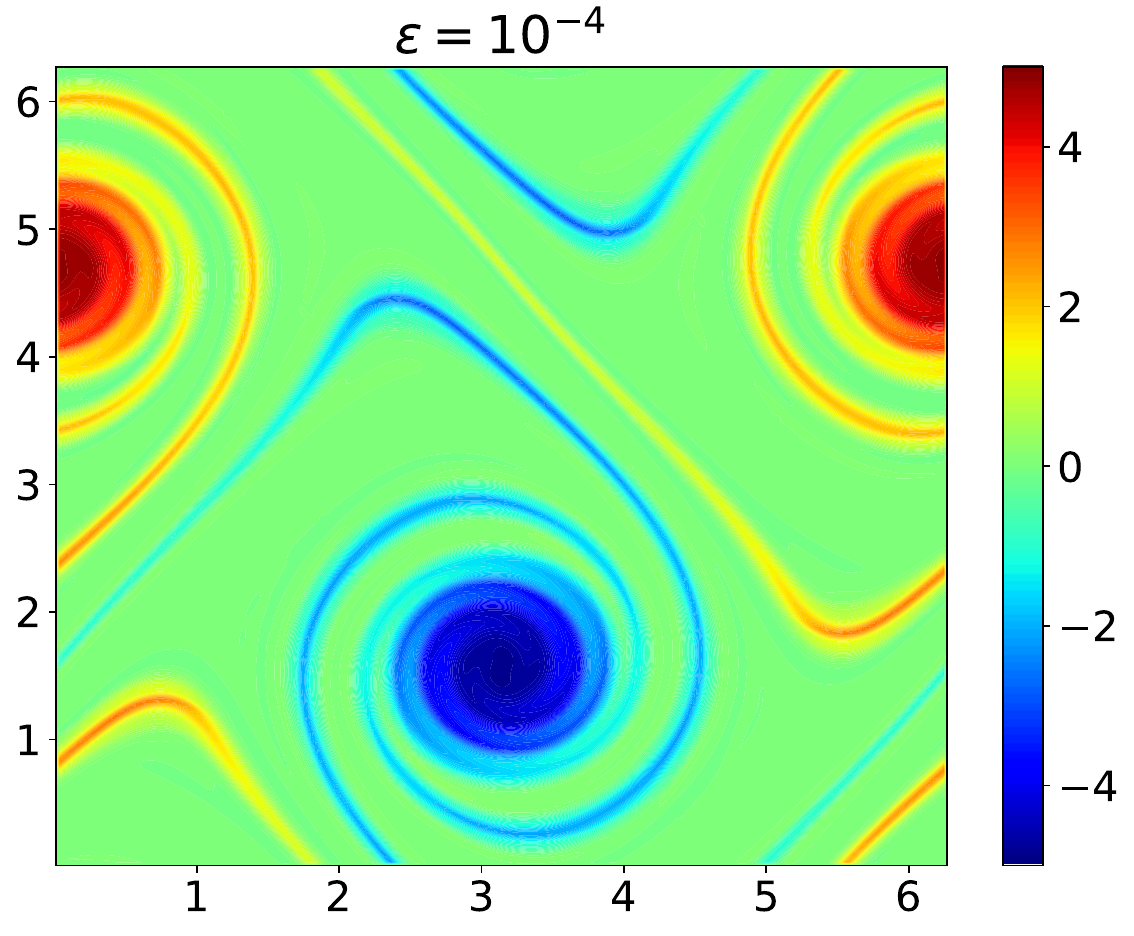}\hspace*{0.1cm}
		\includegraphics[width=0.33\textwidth]{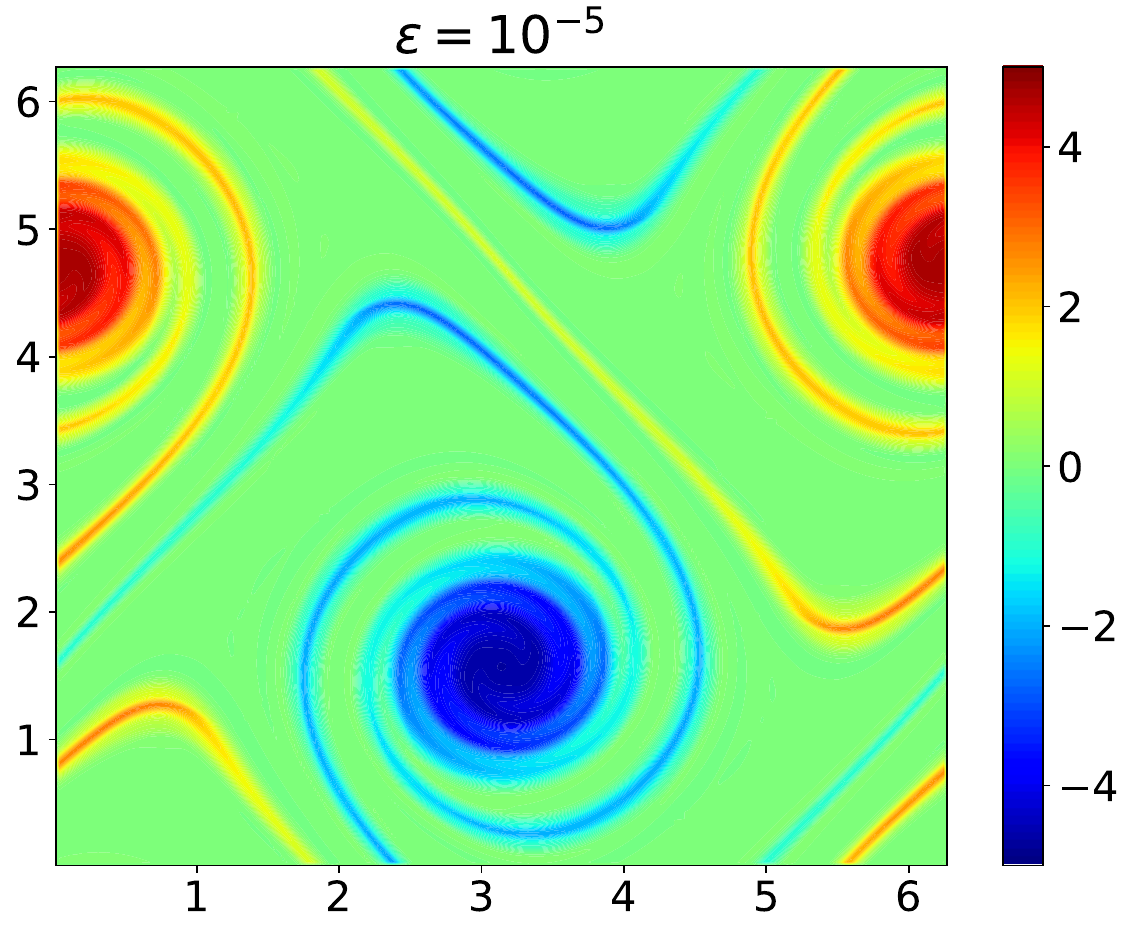}\hspace*{0.1cm}
		\includegraphics[width=0.33\textwidth]{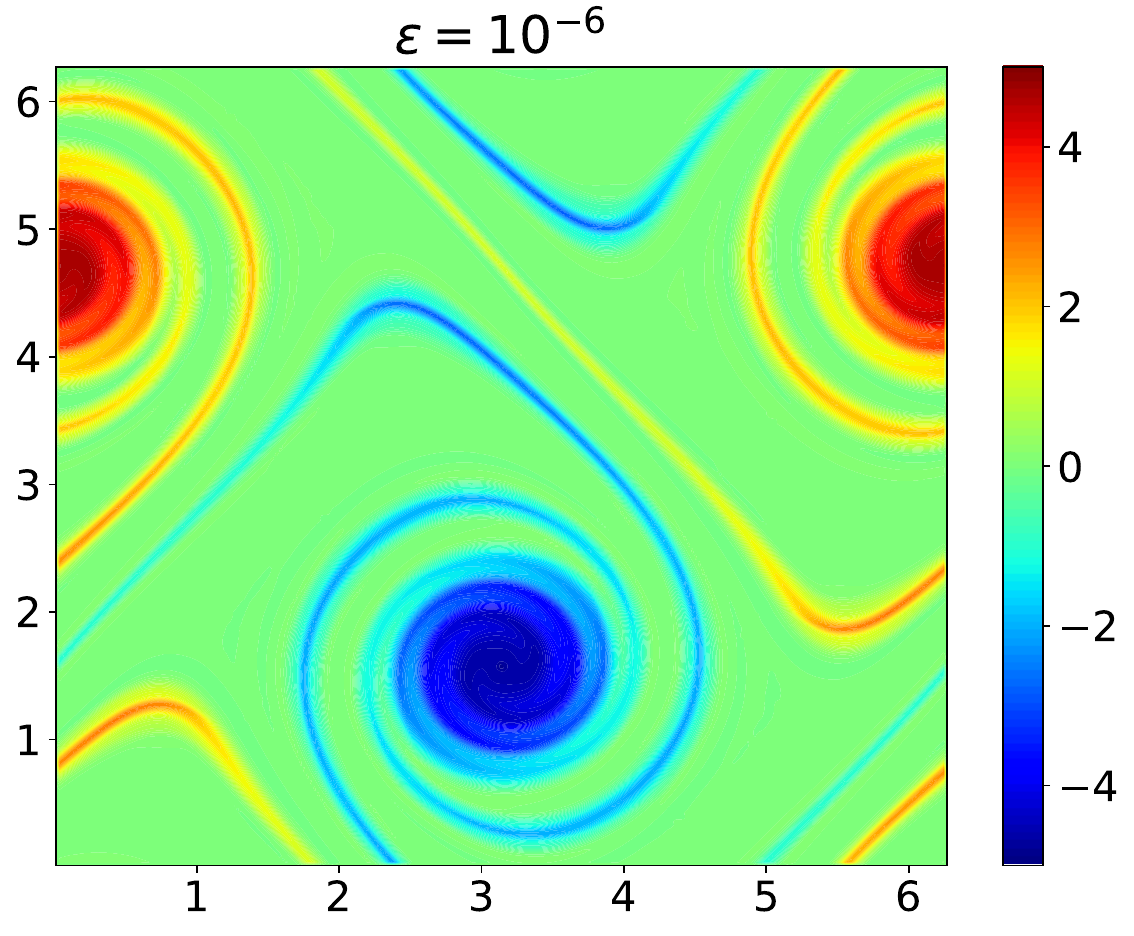}}
	\caption{\sf Example 4: The same as in Figure \ref{fig46}, but at $t=10$.\label{fig47}}
\end{figure}

We remark that the simulations remain stable for larger CFL numbers. However, the use of larger $K_{\rm CFL}$ may lead to a noticeable 
increase in the amount of the numerical diffusion for very small values of $\ve\lesssim10^{-3}$. To illustrate this, we recompute the 
solution with $K_{\rm CFL}=0.475$ for $\ve=10^{-4}$, $10^{-5}$, and $10^{-6}$ and plot the obtained results (for $t=6$) in Figure 
\ref{fig48}. As one can clearly see, the numerical solution is now substantially more diffusive compared with those reported in the bottom 'row of Figure \ref{fig46}.
\begin{figure}[ht!]
	\centerline{\includegraphics[width=0.33\textwidth]{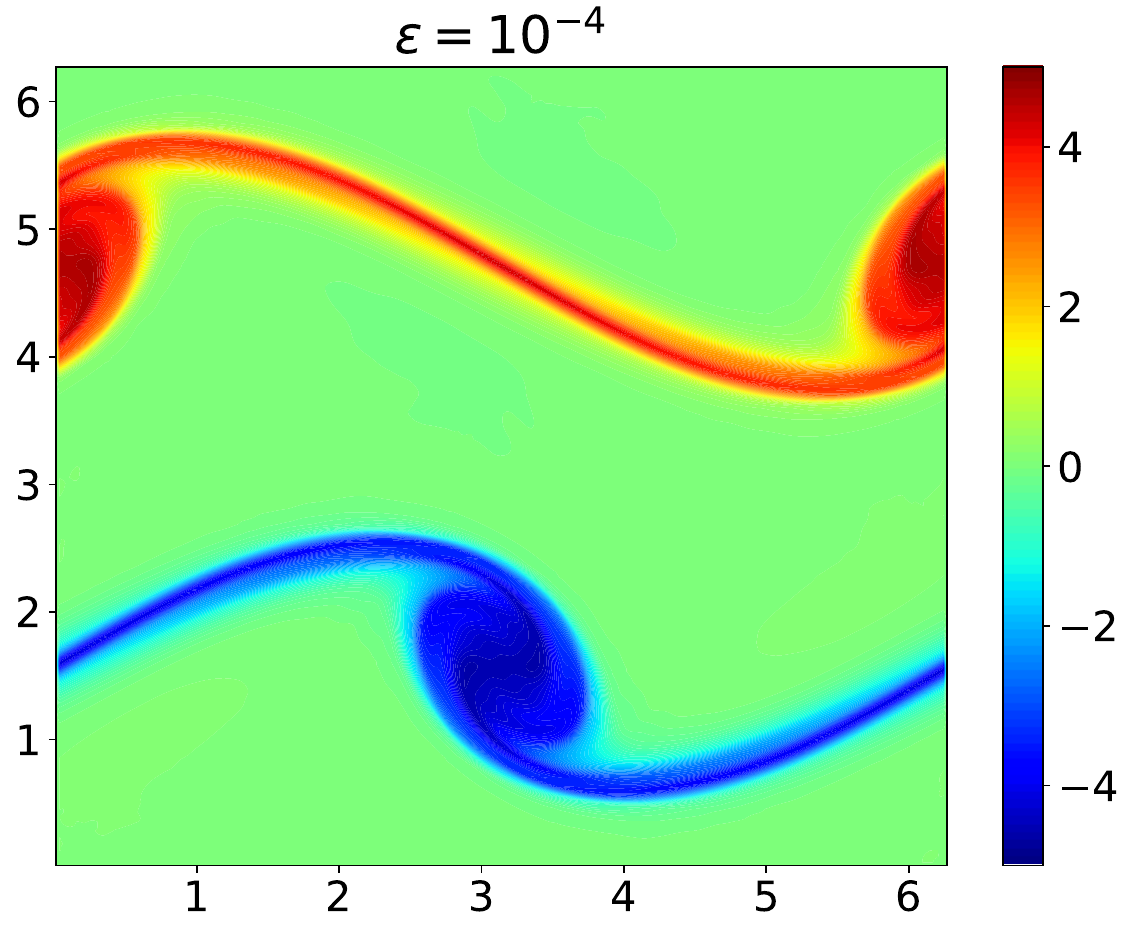}\hspace*{0.1cm}
		\includegraphics[width=0.33\textwidth]{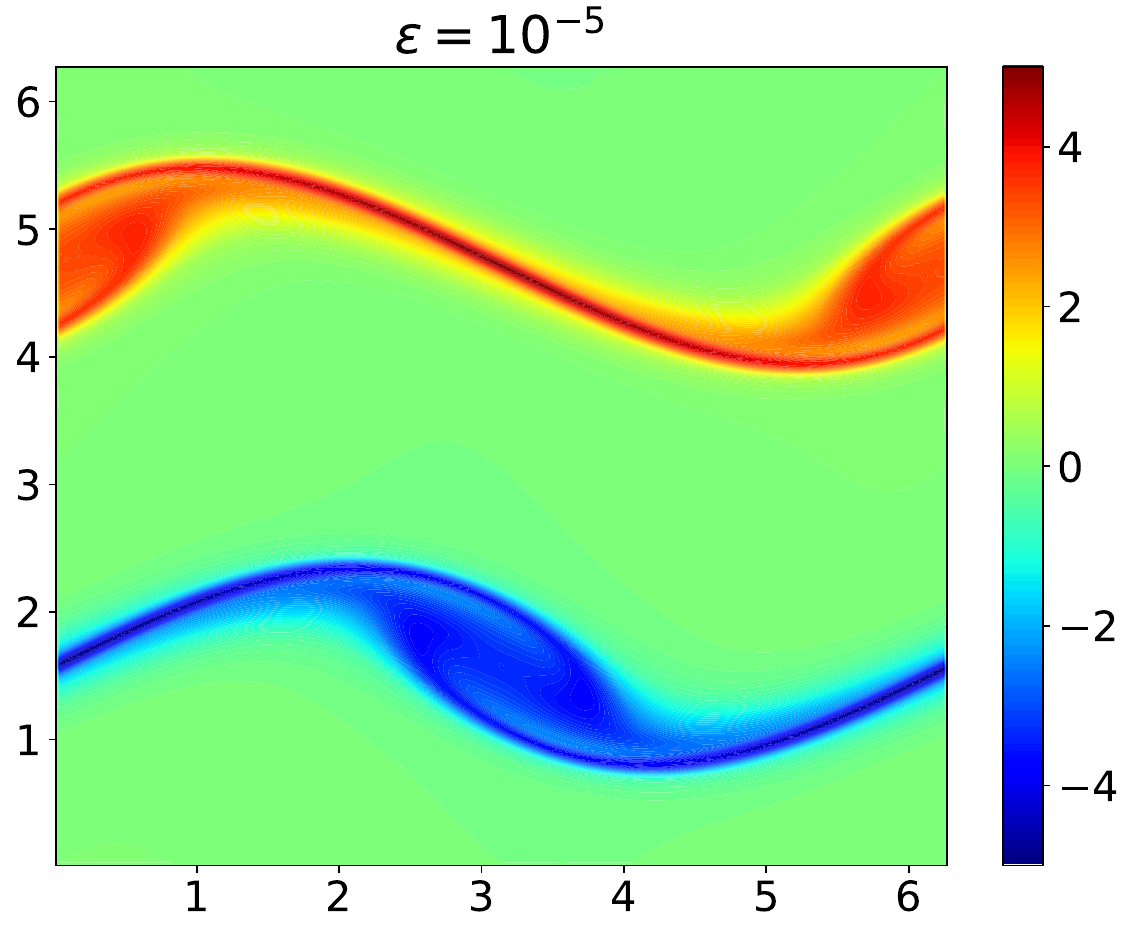}\hspace*{0.1cm}
		\includegraphics[width=0.33\textwidth]{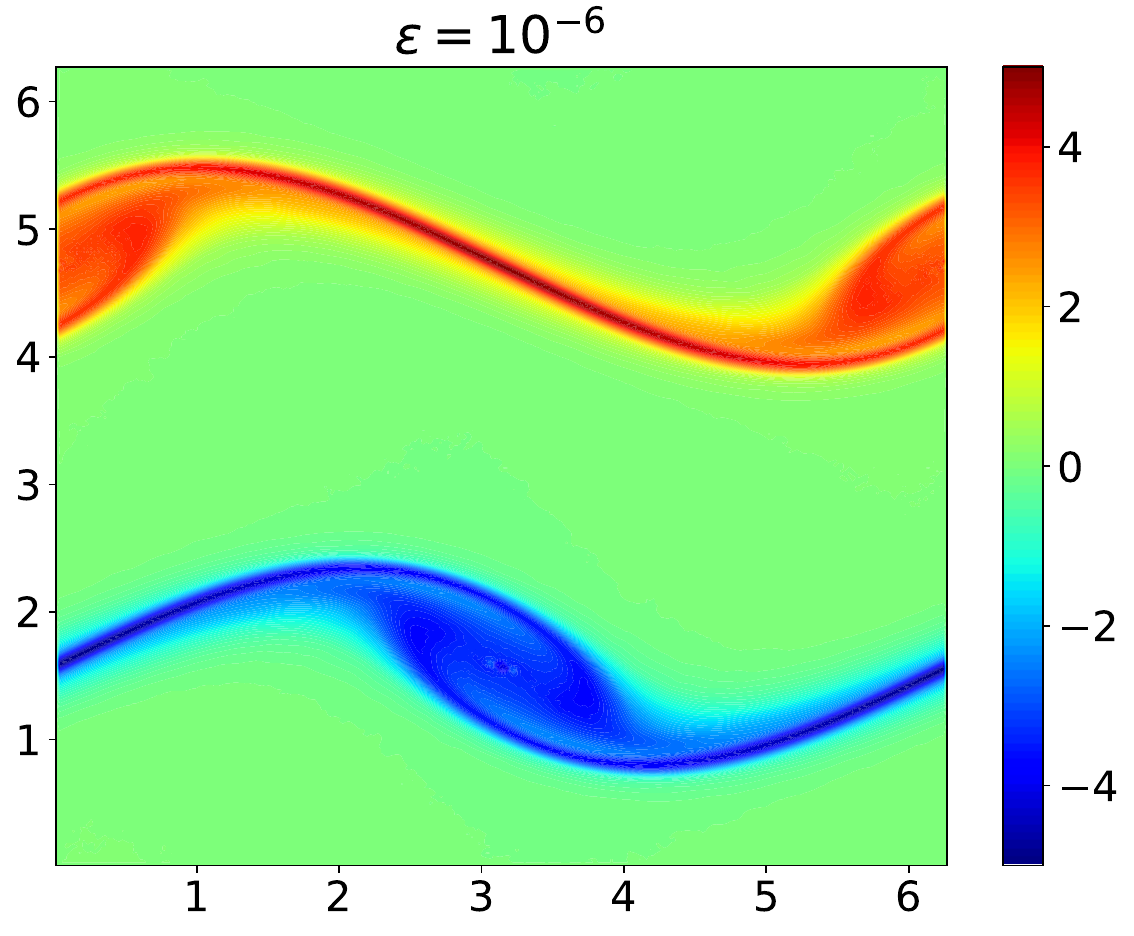}}
	\caption{\sf Example 4: The same as in Figure \ref{fig46}, but for $K_{\rm CFL}=0.475$ (left).\label{fig48}}
\end{figure}

\subsubsection*{Example 5---Explosion Problem}
In the last numerical example, we consider an explosion problem taken from \cite{ToroBook}. The initial data,
$$
(\rho,u,v,p)(x,y,0)=\begin{cases}(1,0,0,1),&\sqrt{x^2+y^2}<0.4,\\(0.125,0,0,0.1),&\mbox{otherwise},\end{cases}
$$
are prescribed in the computational domain $[-1,1]\times[-1,1]$ subject to the free boundary conditions.

The main objective of this test is to verify that the proposed AP DF-FV scheme remains accurate and stable, also in the high-Mach-number
regime, in which strong shocks and contact discontinuities may be present. To this end, we perform simulations for several values of $\ve$.
For $\ve=0.9$, $0.6$, and $0.3$ the final times are $t=0.2$, $0.15$, and $0.08$, respectively. The surface plots of the density $\rho$
computed on a uniform mesh with $400\times400$ cells using $K_{\rm CFL}=0.475$ are reported in Figure \ref{fig49}, where one can see that
the obtained solutions are oscillation-free and their nonsmooth features are accurately resolved for all values of $\ve$.
\newcommand{\trimfig}[1]{\includegraphics[width=0.35\textwidth, trim=140 260 190 250, clip]{#1}}
\begin{figure}[ht!]
	\centerline{\hspace*{-0.7cm}\trimfig{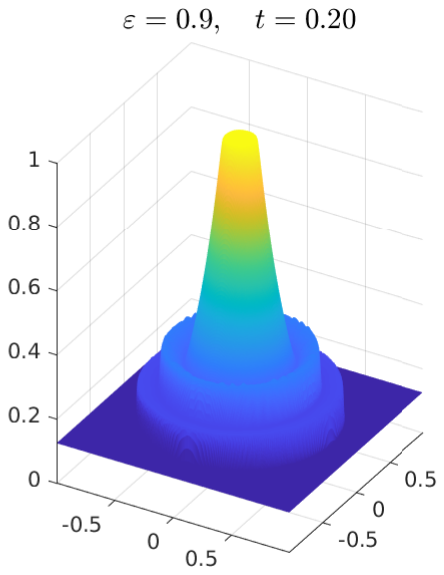}\hspace*{-0.2cm}
		\trimfig{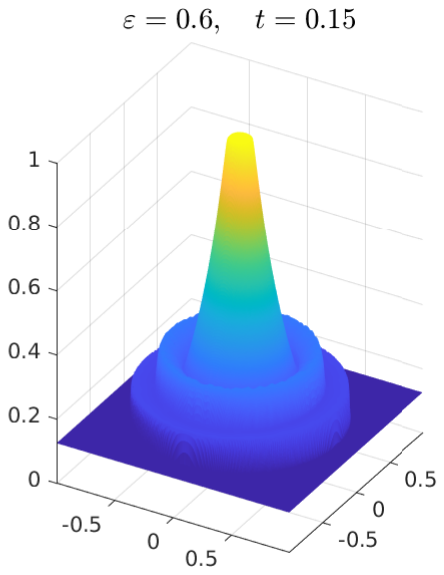}\hspace*{-0.2cm}
		\trimfig{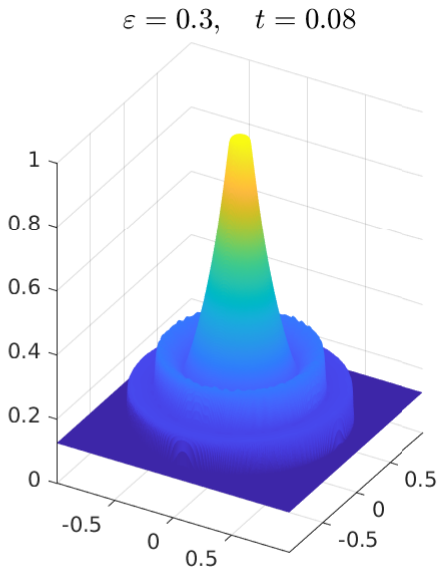}}
	\caption{\sf Example 5: Surface plot of density for different values of $\ve$ and corresponding times.\label{fig49}}
\end{figure}

To further assess the correctness of the computed solutions, we plot their one-dimensional (1-D) slices along the diagonal $y=x$ in Figure
\ref{fig410} together with the corresponding slices of the reference solution, which was obtained using the second-order semi-discrete CU
scheme from \cite{KLin} on a much finer mesh with $2000\times2000$ cells using the CFL number $0.2$ and the three-stage third-order strong
stability preserving (SSP) Runge-Kutta method \cite{GKS,GST}. As one can clearly see, the computed solutions show a perfect agreement with
the reference ones, and the discontinuities locations are correctly captured.
\begin{figure}[ht!]
	\centerline{\includegraphics[width=0.8\textwidth]{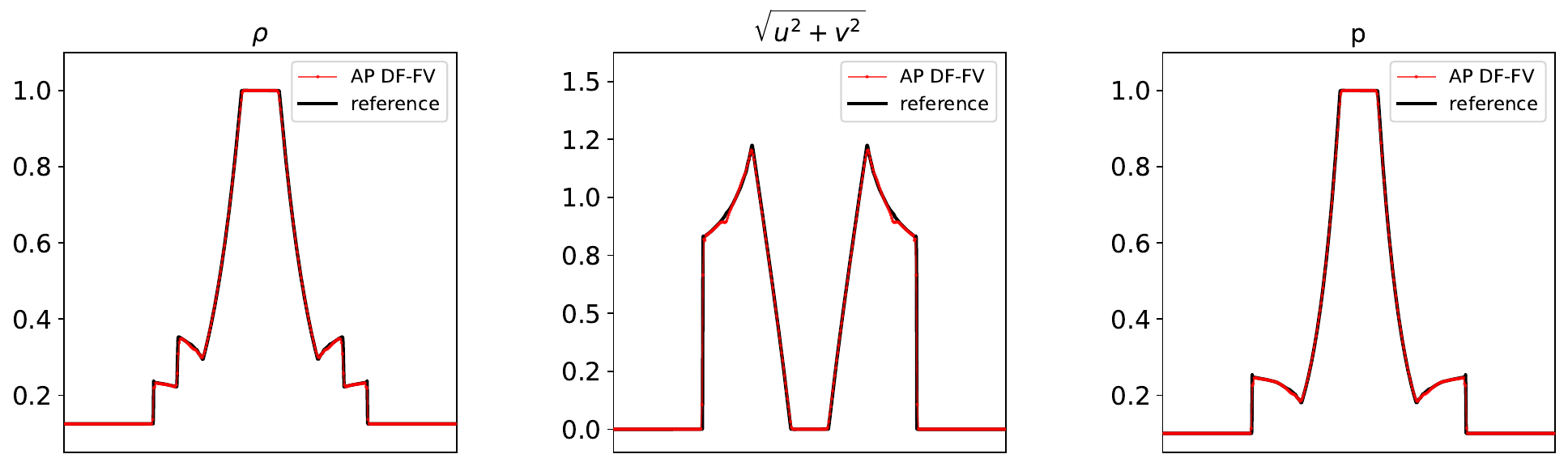}}
	\vskip3pt
	\centerline{\includegraphics[width=0.8\textwidth]{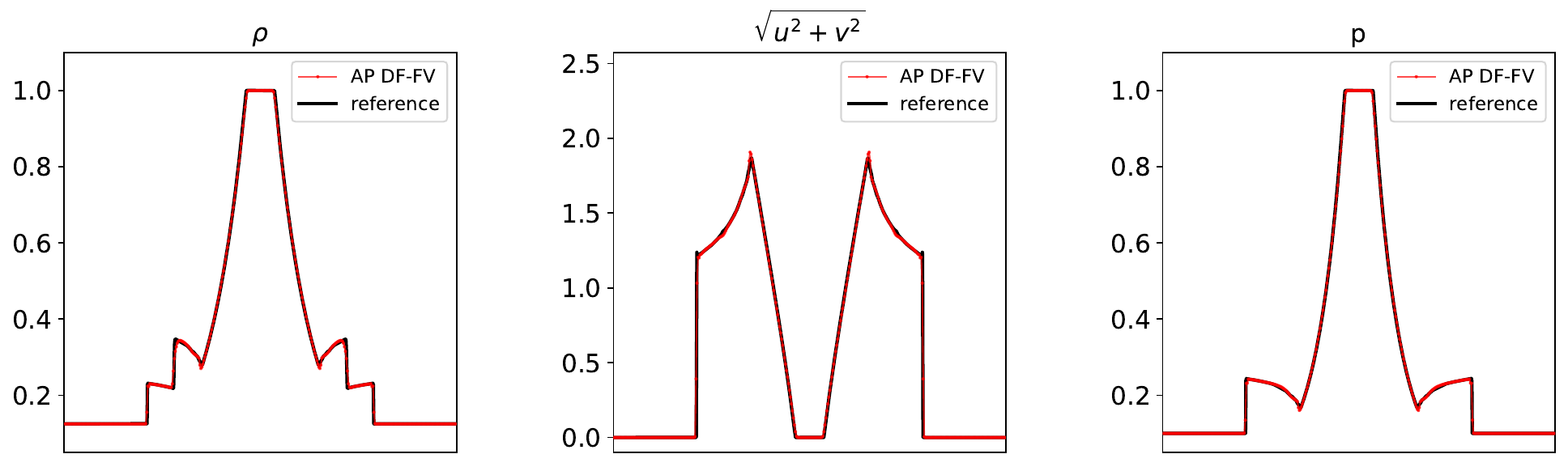}}
	\vskip3pt
	\centerline{\includegraphics[width=0.8\textwidth]{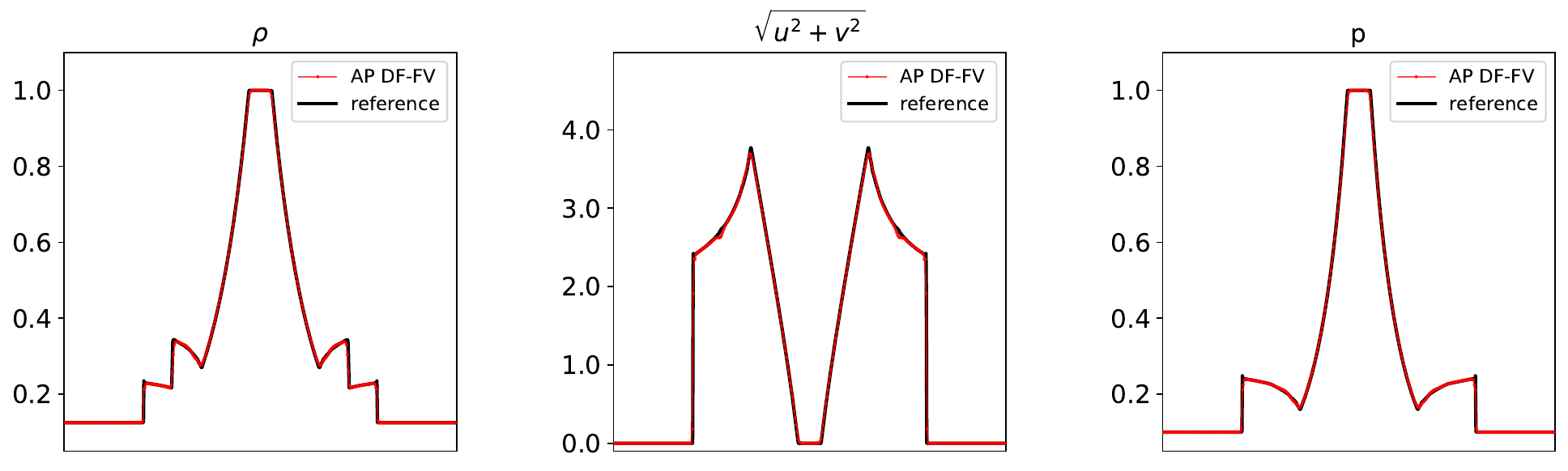}}
	\caption{\sf Example 5: 1-D slices of the computed solutions along $y=x$ for different values of $\ve$ and at different times: $\ve=0.9$,
		$t=0.2$ (top row), $\ve=0.6$, $t=0.15$ (middle row), and $\ve=0.3$, $t=0.08$ (bottom row).\label{fig410}}
\end{figure}

{\color{black}We stress that for the large (intermediate) Mach numbers considered in this example, the $\bm U$-solution is reliable and
	plays a crucial role in ensuring a correct handling of the discontinuities through the post-processing. We omit the plots of the
	$\bm U$-solution because they are visually indistinguishable from the ones reported. In this situation, one can see how crucial is the role
	of the post-processing. Without it, the nonconservative evolution of the primitive variables would lead to incorrect solutions. While in the
	case when $\ve=0.9$ or $0.6$, the weight $s(\ve)$ in \eref{star} is very close to $0$ and thus the post-processed solution is basically the
	conservative one, $\ve=0.3$ corresponds to a truly intermediate-Mach-number regime and it is instructive to look at the unreliable
	$\bm V$-solution obtained by the same scheme but without the post-processing. Such $\bm V$-solution is reported in Figure
	\ref{fig411}, where one can see a slight difference in the location of the discontinuities as well as other small numerical artifacts.
	Furthermore, we emphasize that in order to run the ``solely'' $\bm V$-simulation, it was necessary to lower $K_{\rm CFL}$ down to $0.01$.}
\begin{figure}[ht!]
	\centerline{\includegraphics[width=0.8\textwidth]{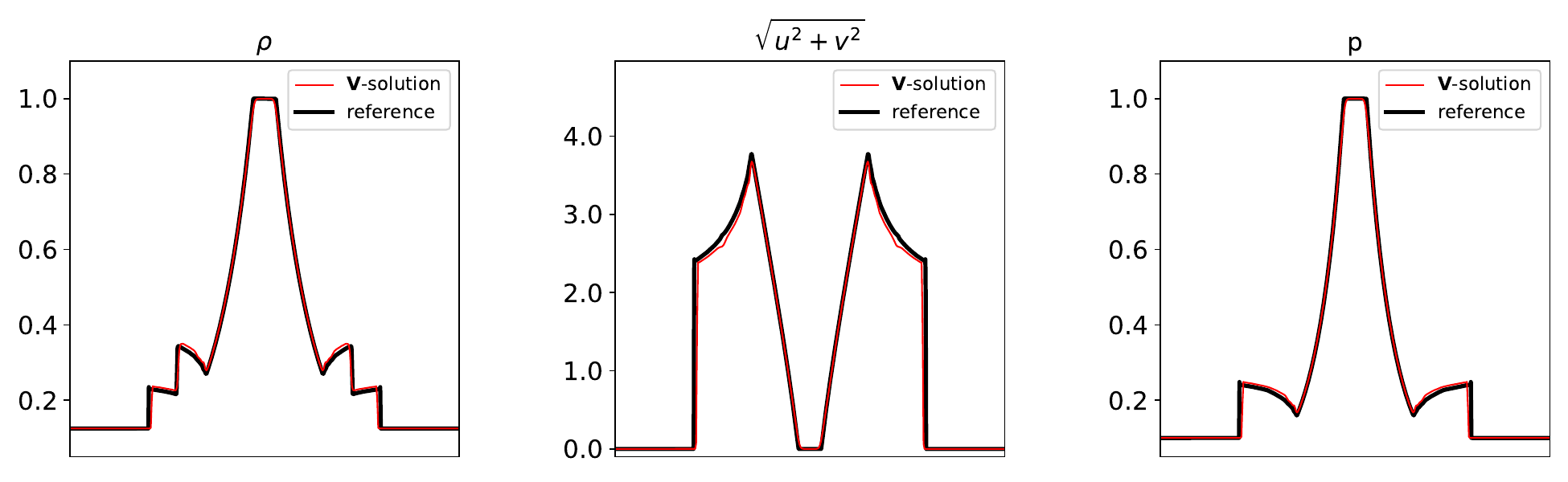}}
	\caption{\sf Example 5: 1-D slices of the ``solely'' $\bm V$-solution along $y=x$ for $\ve=0.3$, computed at $t=0.08$ without
		post-processing.\label{fig411}}
\end{figure}

\begin{remark}
	We stress that in this example, both terms in the numerator in \eref{4.48f} will vanish if $\rho_{\max}$ and $p_{\min}$ are computed using
	\eref{4.3}. While the modification \eref{4.9} ensures positivity of $\tilde c$, the resulting time steps might still be too big to guarantee
	stability of the AP DF-FV method. Therefore, we set $\dt=10^{-4}$ for the first 10 time steps for the simulations involving $\ve=0.6$ and
	$0.3$.
\end{remark}
\begin{remark}
	Let us remark that discontinuities are unlikely to occur in low-Mach-number flows. Consequently, the above tests with $\ve=0.6$ and $0.3$
	should be regarded as ``academic'' and are primarily intended to demonstrate that the proposed AP DF-FV scheme is capable of handling
	discontinuities even in the low-Mach-number regime.
\end{remark}

\section{Conclusion}\label{sec5}
We have presented a novel asymptotic-preserving (AP) numerical method for the compressible Euler equations that is effective across all
Mach-number regimes, including the low-Mach-number one, where standard explicit schemes become inefficient. The key idea is a new 
hyperbolic splitting, inspired by the flux-splitting approach introduced in \cite{haack2012all}. The new splitting is applied to a 
primitive (nonconservative) formulation of the Euler equations, which enables one to design an efficient semi-implicit (SI) time 
discretization. Our splitting isolates stiff linear terms, which are discretized semi-implicitly: this leads to a well-posed linear 
elliptic problem, which ensures the AP property of the resulting scheme.

To overcome the well-known difficulties associated with the use of nonconservative formulations in the presence of discontinuities, we
implement the proposed AP scheme within the recently introduced dual formulation framework \cite{CFKM,ACKM}. In this approach, the
conservative and primitive systems are solved simultaneously, and their resulting solutions are post-processed to ensure the correct
capturing of discontinuities while retaining the AP property of the primitive-based SI approach.

The proposed AP dual formulation finite-volume (DF-FV) method has been thoroughly validated on several benchmarks ranging from the fully
compressible to the nearly incompressible regime, demonstrating both high accuracy and robustness of the method. Future work will focus on
extending the AP DF-FV framework to more complex systems and on developing higher-order spatial and temporal discretizations.

\begin{acknowledgment}
The work of A. Chertock was supported in part by NSF grant DMS-2208438. The work of A. Kurganov was supported in part by NSFC grant
W2431004. The work of L. Micalizzi was supported in part by the LeRoy B. Martin, Jr. Distinguished Professorship Foundation.
\end{acknowledgment}

\bibliography{ref}
\bibliographystyle{siam}
\end{document}